\newcommand{\N}{\mathbb{N}}
\newcommand{\Z}{\mathbb{Z}}
\newcommand{\floor}[1]{\left\lfloor #1 \right\rfloor}
\newcommand{\ceil}[1]{\left\lceil #1 \right\rceil}
\newcommand{\pas}[1]{\left(#1\right)}
\newcommand{\Pas}[1]{\left\{#1\right\}}
\newcommand{\bbracket}[1]{\left\llbracket #1 \right\rrbracket }
\newcommand{\PS}{\mathrm{PS}}
\newcommand{\upperT}{\overline{T}}
\newcommand{\upperS}{\overline{S}}
\numberwithin{equation}{section}
\newcounter{count}
\newcommand{\num}{\stepcounter{count}\the\value{count}}
\renewcommand{\epsilon}{\varepsilon}
\renewcommand{\liminf}{\varliminf}
\newtheorem{theorem}{Theorem}[section]
\newtheorem{lemma}[theorem]{Lemma}
\newtheorem{corollary}[theorem]{Corollary}
\newtheorem{conjecture}[theorem]{Conjecture}
\newtheorem{question}[theorem]{Question}
\theoremstyle{definition}
\newtheorem{remark}[theorem]{Remark}
\newtheorem{notation}[theorem]{Notation}
\begin{document}

\title[Diophantine equations on analogs of squares]{A system of certain linear Diophantine\\ equations on analogs of squares}

\author[Y. Kanado]{Yuya Kanado }
\address{Yuya Kanado\\
Graduate School of Mathematics\\ Nagoya University\\ Furo-cho\\ Chikusa-ku\\ Nagoya\\ 464-8602\\ Japan}
\email{m21017a@math.nagoya-u.ac.jp}

\author[K. Saito]{Kota Saito}
\address{Kota Saito\\Faculty of Pure and Applied Sciences\\ University of Tsukuba\\ 1-1-1 Tennodai\\ Tsukuba\\ Ibaraki\\ 305-8577\\ Japan}
\email{saito.kota.gn@u.tsukuba.ac.jp}

\thanks{}

\subjclass[2020]{Primary:11D04, Secondary: 11J71}
\keywords{perfect Euler brick, uniform distribution modulo $1$, Pell sequence}

\begin{abstract}
This study investigates the existence of tuples $(k, \ell, m)$ of integers such that all of $k$, $\ell$, $m$, $k+\ell$, $\ell+m$, $m+k$, $k+\ell+m$ belong to $S(\alpha)$, where $S(\alpha)$ is the set of all integers of the form $\lfloor \alpha n^2 \rfloor$ for $n\geq \alpha^{-1/2}$ and $\lfloor x\rfloor$ denotes the integer part of $x$.   We show that $T(\alpha)$, the set of all such tuples, is infinite for all $\alpha\in (0,1)\cap \mathbb{Q}$ and for almost all $\alpha\in (0,1)$ in the sense of the Lebesgue measure. Furthermore, we show that if there exists $\alpha>0$ such that $T(\alpha)$ is finite, then there is no perfect Euler brick. We also examine the set of all integers of the form $\lceil \alpha n^2 \rceil$ for $n\in \mathbb{N}$.
\end{abstract}

\subjclass[2020]{Primary:11D04, Secondary: 11J71, 11B39}

\maketitle

\section{Introduction}
Let $\mathbb{N}$ be the set of all positive integers, and $S$ be the set of all perfect squares. A rectangular cuboid is called an \textit{Euler brick} if the edges and face diagonals have integral lengths. Further, an Euler brick is called a \textit{perfect Euler brick} if the space diagonal also has integral length. It is known that there are infinitely many Euler bricks. For example, let $(a,b,c)$ be a tuple of the side lengths of an Euler brick. Then we have
\[
(a,b,c)= (240, 117, 44),\ (275, 252, 240),\ (693, 480, 140),\ (720, 132, 85),\ \ldots.
\]
We can see the list of the side lengths of Euler bricks in OEIS A031173, A031174, and A031175. However, the existence (or non-existence) of a perfect Euler brick is unknown. It is a long-standing problem. By the Pythagorean theorem, a perfect Euler brick exists if and only if there exists $(k,\ell,m)\in \mathbb{N}^3$ such that all of
\begin{equation}\label{terms-klm}
    k,\quad \ell,\quad m,\quad k+\ell,\quad \ell+m,\quad m+k,\quad k+\ell+m
\end{equation}
belong to $S$. Instead of squares, Glasscock investigated the Piatetski-Shapiro sequences \cite{Glasscock17}. Let $\lfloor x\rfloor$ denote the integer part of $x$ for all $x\in \mathbb{R}$. For every non-integral $\alpha>1$, the sequence $(\lfloor n^\alpha \rfloor)_{n\in \mathbb{N}} $ is called the \textit{Piatestki-Shapiro sequence with exponent} $\alpha$, and we denote the set of all terms of this sequence by $\PS(\alpha)$. For a given set $X\subseteq \mathbb{N}$, we define $T(X)$ as the set of all tuples $(k,\ell ,m )\in \mathbb{N}^3$ with $k\leq \ell \leq m$ such that all of \eqref{terms-klm}  belong to $X$.

  Interestingly, Glasscock found that $\#T(\PS(\alpha))=\infty$ for almost all $\alpha \in (1,2)$ \cite[Corollary~1]{Glasscock17}. Note that $\PS(2)=S$. The second author improved on this finding, showing that Glasscock's result remains true even if we replace ``for almost all'' with ``for all''; that is, $\#T(\PS(\alpha))=\infty$ for all $\alpha \in (1,2)$ \cite[Corollary~1.2]{Saito22}. In view of these previous results, it is natural to consider the following question.

\begin{question}\label{Question-1}
Can we find a sequence $(s_n)_{n\in \mathbb{N}}$ of integers satisfying that each term $s_n$ is much closer to $n^2$ than $\lfloor n^\alpha \rfloor$ and $\# T (\{s_n\colon n\in \mathbb{N}\})=\infty$?
\end{question}
As an answer to Question~\ref{Question-1}, we propose the following set:
\begin{align*}
S(\alpha) :=\{\lfloor \alpha n^2 \rfloor \colon n\geq \alpha^{-1/2},\ n\in \mathbb{N}  \}. \
\end{align*}
Let $T (\alpha):= T(S (\alpha))$, and define
\[
T_{\leq x} (\alpha):=\{(\lfloor \alpha n_1^2 \rfloor,\lfloor \alpha n_2^2 \rfloor, \lfloor \alpha n_3^2 \rfloor) \in T(\alpha) \colon  n_1,n_2,n_3\in [\alpha^{-1/2},  x]\cap \mathbb{N}  \} 
\]
for all $\alpha>0$ and $x>1$. Note that $S(1)=S$. A linear relation on $S(\alpha)$ has  already been studied in \cite[Appendix~A]{SYoshida}, written by the second author and Yoshida. According to their result \cite[Theorem~15]{SYoshida}, for every integer $k\geq 3$, there exists a sufficiently small $\alpha_k>0$ such that  $S(\alpha)$ contains infinitely many arithmetic progressions of length $k$ for all $0<\alpha \leq \alpha_k$.

 The goal of this article is to develop the theorems shown below. Before stating the theorems, however, a few preliminaries are necessary. First, let $d(\alpha)$ denote the positive denominator of the irreducible fraction of $\alpha$ for all $\alpha\in\mathbb{Q}$. Furthermore, for all $x\in \mathbb{R}$, let $\lceil x\rceil =-\lfloor -x\rfloor$, which means the least integer greater than or equal to $x$. 
\begin{theorem}\label{Theorem-lowerT-Q}
For all $\alpha \in (0,1) \cap \mathbb{Q}$ and $x>1$,  we have 
\begin{equation}\label{Inequality-T(alpha)-Q}
\# T_{\leq x}(\alpha)\geq \floor{\frac{\log(16x) }{4\ceil{\sqrt{2} d(\alpha)} \log (1+\sqrt{2})} } . 
\end{equation}
In particular, $T(\alpha)$ is infinite for all $\alpha\in (0,1)\cap \mathbb{Q}$.
\end{theorem}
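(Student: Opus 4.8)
The plan is to construct, for every integer $j\ge1$, an explicit tuple $T^{(j)}=(k,\ell,m)\in T(\alpha)$ that can be written as $\bigl(\lfloor\alpha n_1^2\rfloor,\lfloor\alpha n_2^2\rfloor,\lfloor\alpha n_3^2\rfloor\bigr)$ with $\alpha^{-1/2}\le n_1,n_2,n_3\le\tfrac1{16}(1+\sqrt2)^{4\ceil{\sqrt{2}\,d}\,j}$, where $d:=d(\alpha)$. Since the $T^{(j)}$ will be pairwise distinct, once $x\ge\tfrac1{16}(1+\sqrt2)^{4\ceil{\sqrt{2}\,d}\,j}$ --- equivalently $j\le\log(16x)\big/\bigl(4\ceil{\sqrt{2}\,d}\log(1+\sqrt2)\bigr)$ --- all of $T^{(1)},\dots,T^{(j)}$ lie in $T_{\le x}(\alpha)$, which is precisely \eqref{Inequality-T(alpha)-Q}; letting $j\to\infty$ then gives $\#T(\alpha)=\infty$.

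Write $\alpha=p/d$ in lowest terms, so $0<p<d$. The construction rests on two elementary remarks. First, $\lfloor\alpha(dt)^2\rfloor=pd\,t^2$ for every $t\ge1$, so every $pd\,t^2$ lies in $S(\alpha)$. Second, if $u,v$ are positive integers with $u^2-2v^2=1$ and $d\mid v$, then writing $v=dt$ one has $\alpha u^2=\tfrac pd\bigl(2d^2t^2+1\bigr)=2pd\,t^2+\tfrac pd$, and since $0<p/d<1$ this forces $\lfloor\alpha u^2\rfloor=2\,(pd\,t^2)$; hence $2\cdot(pd\,t^2)\in S(\alpha)$ as well, and replacing $u$ and $dt$ by $u\pm1,\ dt\pm1$, and so on, produces further members of $S(\alpha)$ lying within a bounded distance of small integer combinations of the ``block'' $B:=pd\,t^2$. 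The pairs $(u,v)$ with $u^2-2v^2=1$ and $d\mid v$ are exactly those arising from the powers $(1+\sqrt2)^m$ whose coefficient of $\sqrt2$ is divisible by $d$; such $m$ are the positive multiples of a fixed integer that an elementary estimate bounds by $4\ceil{\sqrt{2}\,d}$, which is the source of that factor in the statement. Each admissible solution yields one block $B_j$ with $B_j,\,2B_j\in S(\alpha)$ and an attached bank of controlled near-misses.

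The heart of the proof is to assemble, out of a single block and its bank of near-misses, one triple $(k,\ell,m)$ with $k\le\ell\le m$ all seven of whose linear combinations $k,\ell,m,k+\ell,\ell+m,m+k,k+\ell+m$ lie in $S(\alpha)$; this is the step I expect to be the real obstacle. One cannot take $(k,\ell,m)=(k,k,k)$ (which would need $k,2k,3k\in S(\alpha)$) or $(k,k,2k)$ (which would need $k,2k,3k,4k\in S(\alpha)$): each such one-parameter shape forces a pair of Pell equations in a single unknown whose solution sequences grow at incommensurable rates ($3+2\sqrt2$ against $5+2\sqrt6$), so it yields only finitely many tuples. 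One is therefore driven to a genuinely multi-parameter shape in which three of the seven conditions hold for an exact, Pythagorean reason while the remaining ones hold only up to an error that is swallowed by the slack $1/\alpha>1$ of the half-open interval $[j/\alpha,(j+1)/\alpha)$, which must contain a perfect square for $j$ to lie in $S(\alpha)$; showing that those errors stay uniformly bounded along the whole Pell family, and that the side conditions $n\ge\alpha^{-1/2}$ and $k\le\ell\le m$ persist, is the technical core. Granting this, the bound on $n_1,n_2,n_3$ holds because the admissible Pell solutions and their neighbours grow like $(1+\sqrt2)^{4\ceil{\sqrt{2}\,d}\,j}$, the blocks $B_j$ increase strictly so the $T^{(j)}$ are distinct, and bookkeeping of the implied constant --- the source of the harmless factor $16$ --- completes the count.
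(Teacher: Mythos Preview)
Your opening is on the right track and matches the paper: write $\alpha=p/d$, use the Pell relation $u^2-2v^2=1$, and arrange $d\mid v$ so that $B:=\lfloor\alpha v^2\rfloor=pd\,t^2$ and $\lfloor\alpha u^2\rfloor=2B$ hold exactly. The periodicity of $d\mid P_m$ (the paper's Lemma~\ref{Lemma-divisibility}) gives a period $r\le\lceil\sqrt2\,d\rceil$, and one passes to even indices $2rn$ to secure the sign $+1$.

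The genuine gap is exactly where you flag it: you do not have the construction of $m$, and your speculation about its shape points in the wrong direction. There is no Pythagorean ingredient and no ``slack $1/\alpha$'' absorbing errors; the paper's tuple comes from exact identities forced by divisibility. With $y=P_{2rn}$ (so $d\mid y$ and $2\mid y$), the identity
\[
\alpha\Bigl(\tfrac{y^2}{2}+j\Bigr)^2=\alpha\,\tfrac{y^4}{4}+j\,\alpha y^2+\alpha j^2,\qquad j\in\{-1,0,1\},
\]
has its first two right-hand terms integral (since $d\mid y$ and $4\mid y^2$) and its last in $[0,1)$, whence $\bigl\lfloor\alpha(y^2/2+j)^2\bigr\rfloor=\lfloor\alpha y^4/4\rfloor+j\,\lfloor\alpha y^2\rfloor$. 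Thus $\lfloor\alpha(y^2/2-1)^2\rfloor$, $\lfloor\alpha(y^2/2)^2\rfloor$, $\lfloor\alpha(y^2/2+1)^2\rfloor$ is a three-term arithmetic progression in $S(\alpha)$ with common difference $B$. Taking $k=\ell=B$ and $m=\lfloor\alpha(y^2/2-1)^2\rfloor$ then gives $k+\ell=2B=\lfloor\alpha u^2\rfloor$, $k+m=\ell+m=\lfloor\alpha(y^2/2)^2\rfloor$, and $k+\ell+m=\lfloor\alpha(y^2/2+1)^2\rfloor$, so all seven lie in $S(\alpha)$. The factor $4$ in the statement then appears because $n_3=y^2/2-1$ is of order $v^2\sim\phi_P^{4rn}$, not $v$. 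Note also that your proposed perturbations $u\pm1$, $dt\pm1$ do not produce elements ``within a bounded distance'' of combinations of $B$: for instance $\lfloor\alpha(dt\pm1)^2\rfloor=B\pm2pt$, an error growing with $t$.
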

 As a consequence of this theorem, we obtain the following corollary.  
\begin{corollary}\label{Corollary-lowerT-positive}
There exists an open and dense set $U\subseteq (0,1)$ such that $\# T(\alpha)>0$ for all $\alpha\in U$.
\end{corollary}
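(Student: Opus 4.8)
The plan is to bootstrap from Theorem~\ref{Theorem-lowerT-Q}, which guarantees $T(q)\neq\emptyset$ for every $q\in(0,1)\cap\mathbb{Q}$, together with the elementary fact that for a fixed positive integer $j$ the condition $j\in S(\alpha)$ is stable under small perturbations of $\alpha$. First, fix $q\in(0,1)\cap\mathbb{Q}$ and, by Theorem~\ref{Theorem-lowerT-Q}, choose a tuple $(k,\ell,m)\in T(q)$; let $j_1,\dots,j_7$ be the seven integers in \eqref{terms-klm} determined by $(k,\ell,m)$, and for each $i$ fix $n_i\in\mathbb{N}$ with $\lfloor q n_i^2\rfloor=j_i$. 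Since $\lfloor\alpha n_i^2\rfloor=j_i$ is equivalent to $j_i/n_i^2\le\alpha<(j_i+1)/n_i^2$, for every $\alpha$ in
\[
I_q:=\bigcap_{i=1}^{7}\left[\frac{j_i}{n_i^2},\ \frac{j_i+1}{n_i^2}\right)
\]
the same tuple $(k,\ell,m)$ again has all of \eqref{terms-klm} lying in $S(\alpha)$; here one also uses that $j_i\ge 1$ (all entries of \eqref{terms-klm} are positive) forces $\alpha n_i^2\ge 1$, so the constraint $n\ge\alpha^{-1/2}$ in the definition of $S(\alpha)$ is automatically satisfied. Because $q\in I_q$, the set $I_q$ is a nonempty half-open interval, namely $I_q=[\,\max_i j_i/n_i^2,\ \min_i(j_i+1)/n_i^2\,)$, and $\#T(\alpha)>0$ for all $\alpha\in I_q$.

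Next I would set $U:=\bigcup_{q\in(0,1)\cap\mathbb{Q}}\bigl(\operatorname{int} I_q\cap(0,1)\bigr)$, where $\operatorname{int}$ denotes the interior in $\mathbb{R}$. Then $U$ is open, $U\subseteq(0,1)$, and $\#T(\alpha)>0$ for every $\alpha\in U$ by the previous paragraph. For density, let $J\subseteq(0,1)$ be any nonempty open interval and pick $q\in J\cap\mathbb{Q}$. Writing $I_q=[a_q,b_q)$ we have $a_q\le q<b_q$, so every point just to the right of $q$ lies in $\operatorname{int} I_q=(a_q,b_q)$; since $J$ is open and $q<1$, such a point can also be taken inside $J\cap(0,1)$, yielding a point of $U\cap J$. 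Hence $U$ is dense in $(0,1)$, which completes the proof.

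The argument is essentially routine; the only point requiring care is that $S(\alpha)$ is defined through the floor function, so $j\in S(\alpha)$ is preserved only on a half-open interval of values of $\alpha$, not an open one. This costs nothing: a nonempty half-open interval has nonempty interior, and the density step survives because one perturbs $q$ into the interior of $I_q$ by moving to the right rather than symmetrically about $q$. I do not expect any genuine obstacle here.
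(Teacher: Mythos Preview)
Your proof is correct and follows essentially the same approach as the paper: both arguments use Theorem~\ref{Theorem-lowerT-Q} to get a tuple in $T(q)$ for each rational $q\in(0,1)$, then exploit the right-stability of the floor function (the paper packages this as Lemma~\ref{Lemma-IntegerParts}, you compute the half-open interval $I_q$ directly) to push membership in $T(\cdot)$ onto a small interval to the right of $q$, and finally take the union of the interiors and invoke density of $\mathbb{Q}$. The only cosmetic difference is that the paper indexes over the set $A=\{\alpha:\#T(\alpha)=\infty\}$ rather than just the rationals, but since density is obtained from $\mathbb{Q}\subseteq A$ anyway, the content is the same.
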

We will prove Theorem~\ref{Theorem-lowerT-Q} and Corollary~\ref{Corollary-lowerT-positive} in Section~\ref{Section-ProofTheoremCorollary}. Furthermore, we show the following metric result.
\begin{theorem}\label{Theorem-lowerT-almostall}
 Let $0<s<t<1$ be real numbers. For almost all $\alpha \in (s,t)$ and for every $\epsilon>0$, there exists $x_0=x_0(\alpha,s,t,\epsilon)>1$ such that for every $x\geq x_0$, we have
 \[
 \# T_{\leq x}(\alpha)  \geq (1-t)\min \left( 1-t,  2s\right) \frac{\log (16x)}{ 16 \log (1+\sqrt{2})+\epsilon}.
 \]
 In particular, $T(\alpha)$ is infinite for almost all $\alpha\in (0,1)$.
\end{theorem}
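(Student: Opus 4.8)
We outline a proof of Theorem~\ref{Theorem-lowerT-almostall}. The plan is to exhibit an explicit infinite family of \emph{template triples}, indexed by solutions of a Pell equation, such that each template produces an element of $T(\alpha)$ for every $\alpha$ in a set $E_n\subseteq(s,t)$, and then to run a quantitative Borel--Cantelli argument showing that almost every $\alpha\in(s,t)$ lies in infinitely many $E_n$ with enough density to give the stated bound on $\#T_{\le x}(\alpha)$. The starting point is the floor-linearization underlying Theorem~\ref{Theorem-lowerT-Q}: one searches for $(k,\ell,m)$ with $k=\lfloor\alpha n_1^2\rfloor$, $\ell=\lfloor\alpha n_2^2\rfloor$, $m=\lfloor\alpha n_3^2\rfloor$, together with auxiliary integers $n_4,\dots,n_7$ realizing the remaining entries of \eqref{terms-klm}. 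Writing $\alpha n^2=\lfloor\alpha n^2\rfloor+\{\alpha n^2\}$, every required identity $\lfloor\alpha u^2\rfloor=\lfloor\alpha v^2\rfloor+\lfloor\alpha w^2\rfloor$ becomes
\[
0\le \{\alpha v^2\}+\{\alpha w^2\}+\alpha\,(u^2-v^2-w^2)<1,
\]
and the identity for $k+\ell+m$ becomes a one-sided inequality on $\{\alpha n_1^2\}+\{\alpha n_2^2\}+\{\alpha n_3^2\}$. Thus everything reduces to choosing a template for which the integer defects $u^2-v^2-w^2$ stay bounded, after which the admissible $\alpha$ are described by finitely many inequalities on the fractional parts $\{\alpha n_i^2\}$.

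For the construction I would take a degenerate shape, say $k=\ell$, so that the seven entries of \eqref{terms-klm} collapse to $k,\,m,\,2k,\,k+m,\,2k+m$. The doubling relation $k+\ell=2k$ is realized by choosing $n_1$ and the integer attached to $2k$ to be consecutive coordinates of a solution of $x^2-2y^2=\pm1$, so that the corresponding defect equals $\pm1$; the relation for $k+m$ is made exact by taking $n_1$ to be a leg of a Pythagorean triple, and the last integer is chosen so that the defect of the relation for $2k+m$ is also $O(1)$ (this is where a further Pell-type identity is needed). Since the Pell solutions grow like $(1+\sqrt2)^n$, the number of usable templates whose coordinates $n_1,n_2,n_3$ are at most $x$ is $\asymp\log x/\log(1+\sqrt2)$, the implied constant (optimized to $16$) coming from the powers of $1+\sqrt2$ that actually occur. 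For a fixed template the admissibility condition is that a point $\big(\{\alpha A_n\},\{\alpha B_n\}\big)\in[0,1)^2$, with $A_n,B_n$ built from the template, lie in an explicit polygon of area bounded below, uniformly in $n$, by a quantity of the shape $(1-t)\min(1-t,2s)$; intersecting with $(s,t)$ yields $\lvert E_n\rvert\gg(1-t)\min(1-t,2s)\,(t-s)$.

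The last step is a quantitative Borel--Cantelli argument. One proves a quasi-independence estimate $\lvert E_m\cap E_n\rvert\ll\lvert E_m\rvert\,\lvert E_n\rvert$ for $m\neq n$, using that $A_m,A_n$ (and $B_m,B_n$) are large and come from widely separated powers of $1+\sqrt2$, so that $\alpha\mapsto\big(\{\alpha A_m\},\{\alpha A_n\}\big)$ equidistributes in $[0,1)^2$ with a usable discrepancy bound on every subinterval of $(s,t)$. A Kochen--Stone/Erd\H{o}s--R\'enyi type lemma then gives that almost every $\alpha\in(s,t)$ lies in infinitely many $E_n$, and indeed $\#\{n\le N:\alpha\in E_n\}\gg N$; re-expressing $N$ through the growth rate $(1+\sqrt2)^{cn}$ of the templates converts this into $\#T_{\le x}(\alpha)\gg(1-t)\min(1-t,2s)\,\log(16x)/\log(1+\sqrt2)$, with the implied constant and the discrepancy error terms absorbed into the additive $\epsilon$ in the denominator.

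The step I expect to be the main obstacle is the template construction: forcing every defect $u^2-v^2-w^2$ to remain bounded is essentially demanding a ``near-perfect Euler brick'', and arranging this simultaneously with the Pell structure (so that the template count up to $x$ is genuinely $\gg\log x$, with the right constant) is delicate. A secondary technical difficulty is the quasi-independence estimate for the events $\{\alpha A_n\}\in I_n$ across the geometric sequence of scales $A_n$, which is precisely what produces the $\epsilon$-loss in the statement.
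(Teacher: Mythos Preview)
Your overall strategy---Pell-based templates with $k=\ell$, followed by a density argument for the good $\alpha$---matches the paper's, but the template you sketch is not the right one, and this is a genuine gap rather than a detail.

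You propose to realize $k+m$ by making $n_1$ a leg of a Pythagorean triple and to handle $2k+m$ by a ``further Pell-type identity''. Neither is what is needed, and forcing Pell structure \emph{and} Pythagorean structure on the same $n_1$ is exactly the kind of simultaneous Diophantine constraint you yourself flag as the main obstacle. The paper's key idea bypasses this entirely: with $y=P_{2n}$ one takes
\[
m=\lfloor\alpha(y^2/2-1)^2\rfloor,\qquad k+m=\lfloor\alpha(y^2/2)^2\rfloor,\qquad 2k+m=\lfloor\alpha(y^2/2+1)^2\rfloor,
\]
so the three needed values form an arithmetic progression with common difference $\lfloor\alpha y^2\rfloor$, coming from the elementary identity $(y^2/2+j)^2=y^4/4+jy^2+j^2$. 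All defects are then automatically $0$ or $1$, and the whole template depends on only \emph{two} fractional parts, $\{\alpha y^2\}$ and $\{\alpha y^4/4\}$. This is the missing insight; without it your construction is not complete and the constant $16$ in the denominator (which comes from $y^2/2-1\le\phi_P^{4n}/16$) has no source.

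On the metric side, your Kochen--Stone/quasi-independence plan would likely work but is heavier than necessary. The paper instead proves directly that $((\alpha P_{2n}^4/4,\alpha P_{2n}^2))_{n\ge1}$ is uniformly distributed mod~$1$ for almost every $\alpha$, via Weyl's criterion and a Davenport--Erd\H{o}s--LeVeque $L^2$ lemma (your ``quasi-independence'' is hidden in the estimate $\sum_{m<n}(P_{2n}^2-P_{2m}^2)^{-1}\ll1$). Once uniform distribution is in hand, the density of good $n$ equals the area of the target rectangle $[a,(1-t)/2]\times[0,(1-t)/2]$ with $a=\max(0,(1-t)/2-s)$, which is precisely $\tfrac14(1-t)\min(1-t,2s)$; combined with the growth $y^2/2-1\le\phi_P^{4n}/16$ this gives the stated bound with the $\epsilon$ absorbing only the $o(N)$ error from uniform distribution, not a discrepancy estimate.
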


In addition, we obtain quantitative upper bounds for $\# T_{\leq x}(\alpha) $.
\begin{theorem}\label{Theorem-Upperbounds}
Let $\alpha>0$. There exists $C=C(\alpha)>0$ such that for all $x\geq 2$, we have $\#T_{\leq x}(\alpha)\leq C x(\log x)^{15}$.
\end{theorem}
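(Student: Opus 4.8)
The plan is to convert the floor-function conditions into a system of Pythagorean-type Diophantine relations carrying only bounded error terms, to discard all but two of them, reduce the count to a divisor sum along a quadratic polynomial, and estimate that sum; throughout, implied constants may depend on $\alpha$ and on the auxiliary integers introduced below. Suppose $(k,\ell,m)=(\floor{\alpha n_1^2},\floor{\alpha n_2^2},\floor{\alpha n_3^2})\in T_{\le x}(\alpha)$ with $n_1,n_2,n_3\in[\alpha^{-1/2},x]\cap\N$. Since $k+\ell$, $\ell+m$, $m+k$ and $k+\ell+m$ lie in $S(\alpha)$, there are integers $n_4,\dots,n_7\ge\alpha^{-1/2}$ with $\floor{\alpha n_4^2}=k+\ell$, $\floor{\alpha n_5^2}=\ell+m$, $\floor{\alpha n_6^2}=m+k$ and $\floor{\alpha n_7^2}=k+\ell+m$. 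Comparing integer parts and using $0\le\{y\}<1$, the integers $\delta_1=n_4^2-n_1^2-n_2^2$, $\delta_2=n_5^2-n_2^2-n_3^2$, $\delta_3=n_6^2-n_3^2-n_1^2$, $\delta_4=n_7^2-n_1^2-n_2^2-n_3^2$ all lie in the fixed bounded interval $(-3/\alpha,1/\alpha)$, so the tuple $(\delta_1,\dots,\delta_4)$ takes only $O(1)$ values. Since distinct elements of $T_{\le x}(\alpha)$ yield distinct triples $(n_1,n_2,n_3)$ (assign to each the least admissible $n_i$ in every coordinate), it suffices to bound, for each fixed tuple $(\delta_1,\dots,\delta_4)$, the number of $(a,b,c)\in([1,x]\cap\N)^3$ for which $a^2+b^2+\delta_1$, $b^2+c^2+\delta_2$, $c^2+a^2+\delta_3$ and $a^2+b^2+c^2+\delta_4$ are all perfect squares, and then to sum over the $O(1)$ admissible tuples.

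Next I would retain only the first two square conditions. Writing $a^2+b^2+\delta_1=p^2$ as $(a-p)(a+p)=-(b^2+\delta_1)$ shows that for each fixed $b$ with $b^2+\delta_1\ne 0$ the value of $a$ is forced by a factorisation of $\abs{b^2+\delta_1}$, so there are at most $\tau(\abs{b^2+\delta_1})$ admissible $a$; likewise at most $\tau(\abs{b^2+\delta_2})$ admissible $c$. The $O(1)$ degenerate values of $b$ (those with $b^2+\delta_1=0$ or $b^2+\delta_2=0$, which force $b=O(1)$) are treated with the remaining conditions and contribute $\ll x(\log x)^2$. Hence the quantity to be bounded is $\ll x(\log x)^2+\sum_{b\le x}\tau(\abs{b^2+\delta_1})\,\tau(\abs{b^2+\delta_2})$, and everything reduces to the estimate $\sum_{b\le x}\tau(\abs{b^2+\delta})\,\tau(\abs{b^2+\delta'})\ll x(\log x)^{15}$ for fixed integers $\delta,\delta'$.

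To prove this last estimate I would use the Cauchy--Schwarz inequality to reduce to $\sum_{b\le x}\tau(\abs{b^2+\delta})^2\ll x(\log x)^{15}$ for each fixed $\delta$, then the elementary pointwise inequality $\tau(n)^2\le\tau_4(n)$ to reduce further to $\sum_{b\le x}\tau_4(\abs{b^2+\delta})\ll x(\log x)^{15}$. When $-\delta$ is a perfect square, say $\delta=-e^2$, one has $b^2+\delta=(b-e)(b+e)$, and submultiplicativity of $\tau_4$ together with the classical second-moment bound $\sum_{n\le x}\tau_4(n)^2\ll x(\log x)^{15}$ (after one more application of Cauchy--Schwarz) gives the bound; the case $\delta=0$ is identical via $\tau_4(b^2)\le\tau_4(b)^2$, and when $b^2+\delta$ is irreducible over $\Z$ standard bounds for divisor functions along values of a fixed quadratic polynomial give an even smaller power of $\log x$. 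Summing over the admissible tuples then yields $\#T_{\le x}(\alpha)\ll_\alpha x(\log x)^{15}$, which is the assertion. The technical heart, and the step I expect to be hardest, is this divisor-sum estimate: the exponent $15=2^4-1$ reflects the fourth moment of the divisor function and is surely not optimal (a Nair--Tenenbaum-type input would lower it), and the delicate point that the passage through $\sum_{n\le x}\tau_4(n)^2$ circumvents — at the cost of this exponent — is the large-divisor diagonal in a direct hyperbola-method treatment of $\sum_{b\le x}\tau(\abs{b^2+\delta_1})\,\tau(\abs{b^2+\delta_2})$, namely the terms in which both chosen divisors are comparable to $x$.
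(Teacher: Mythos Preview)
Your proposal is correct and follows essentially the same route as the paper: reduce the membership conditions to Pythagorean-type relations with $O_\alpha(1)$ error terms, keep only two of them, fix the common variable, bound the remaining two variables by divisor functions of a shifted square, and finish with Cauchy--Schwarz on $\sum_{b\le x}\tau(|b^2+\delta|)^2$. The only cosmetic difference is in the endgame for that last sum: the paper quotes Delmer's bound $\sum_{m\le x}\tau_+^s(P(m))\ll_{P,s} x(\log x)^{2^s-1}$ for irreducible $P$ and derives the reducible case by one more Cauchy--Schwarz, whereas you invoke the pointwise inequality $\tau^2\le\tau_4$ together with $\sum_{n\le x}\tau_4(n)^2\ll x(\log x)^{15}$ in the reducible case---both paths land on the exponent $15=4^2-1$.
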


The main contribution of our results is to disclose the infinitude of $T(\alpha)$ for all $\alpha\in (0,1)\cap \mathbb{Q}$ and for almost all $\alpha\in (0,1)$ even though the existence of an element in $T(1)$ is unknown. Furthermore, we obtain quantitative upper and lower bounds for the cardinality of $T_{\leq x}(\alpha)$. We will show Theorem~\ref{Theorem-lowerT-almostall} and Theorem~\ref{Theorem-Upperbounds} in Section~\ref{Section-UniformDistribution} and Section~\ref{Section-Upperbounds}, respectively.  Note that we cannot prove the existence of $\alpha $ satisfying the condition that $T(\alpha)$ is finite. Interestingly, the existence of such $\alpha$ implies the non-existence of a perfect Euler brick as follows.  

\begin{theorem}\label{Theorem-infinitePEBs}
Assume that there is $\alpha >0$ such that 
\begin{equation}\label{Equation-limitinfT}
\liminf_{x\to \infty} \#T_{\leq x}(\alpha)/x=0.
\end{equation}
Then there is no perfect Euler brick. In particular, if there is $\alpha >0$ such that $T(\alpha)$ is finite, then there is no perfect Euler brick.
\end{theorem}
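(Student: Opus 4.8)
The plan is to argue by contraposition: assuming a perfect Euler brick exists, I will show that $\#T_{\le x}(\alpha)\gg_\alpha x$ for \emph{every} $\alpha>0$, which rules out \eqref{Equation-limitinfT}. So fix a perfect Euler brick with integer sides $a\le b\le c$; by the Pythagorean theorem there are positive integers $d,e,f,g$ with $a^2+b^2=d^2$, $b^2+c^2=e^2$, $c^2+a^2=f^2$ and $a^2+b^2+c^2=g^2$. For each $t\in\mathbb{N}$ put
\[
(k_t,\ell_t,m_t):=\bigl(\lfloor\alpha a^2t^2\rfloor,\ \lfloor\alpha b^2t^2\rfloor,\ \lfloor\alpha c^2t^2\rfloor\bigr).
\]
Since $a\le b\le c$ the triple is non-decreasing, and $k_t,\ell_t,m_t\in S(\alpha)$ as soon as $t\ge\alpha^{-1/2}$ (take $n=at,bt,ct$ respectively).

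The crux is that a single one-sided smallness condition on $\{\alpha t^2\}$ also forces the four ``sum'' entries of \eqref{terms-klm} into $S(\alpha)$. Set $\eta:=(a^2+b^2+c^2)^{-1}$ and suppose $\{\alpha t^2\}<\eta$; writing $\alpha t^2=M+\delta$ with $M\in\mathbb{Z}$, $0\le\delta<\eta$, we get $\{\alpha a^2t^2\}=a^2\delta$, $\{\alpha b^2t^2\}=b^2\delta$, $\{\alpha c^2t^2\}=c^2\delta$, so these three fractional parts sum to less than $1$. By the elementary fact that $\lfloor x\rfloor+\lfloor y\rfloor=\lfloor x+y\rfloor$ whenever $\{x\}+\{y\}<1$ (and its three-term analogue), it follows that $k_t+\ell_t=\lfloor\alpha d^2t^2\rfloor$, $\ell_t+m_t=\lfloor\alpha e^2t^2\rfloor$, $m_t+k_t=\lfloor\alpha f^2t^2\rfloor$ and $k_t+\ell_t+m_t=\lfloor\alpha g^2t^2\rfloor$, all of which lie in $S(\alpha)$. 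Hence $(k_t,\ell_t,m_t)\in T(\alpha)$, and since $\lfloor\alpha c^2t^2\rfloor$ is strictly increasing once its increments $\alpha c^2(2t+1)$ exceed $1$, distinct $t$ produce distinct triples; each such triple lies in $T_{\le x}(\alpha)$ provided $ct\le x$.

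It therefore suffices to prove that, for every $\alpha>0$ and every $\eta>0$, one has $\#\{t\le y:\{\alpha t^2\}<\eta\}\gg_{\alpha,\eta}y$. If $\alpha=p/q\in\mathbb{Q}$, every multiple of $q$ satisfies $\{\alpha t^2\}=0$, so the count is at least $y/q$. If $\alpha$ is irrational, Weyl's theorem on the equidistribution of $(\alpha t^2)_{t\in\mathbb{N}}$ modulo $1$ gives $\#\{t\le y:\{\alpha t^2\}<\eta\}\sim\eta y$. Taking $y=\lfloor x/c\rfloor$ then yields $\#T_{\le x}(\alpha)\gg_\alpha x$ for all large $x$, contradicting \eqref{Equation-limitinfT}; the ``in particular'' clause is immediate, since $T(\alpha)$ finite forces $\#T_{\le x}(\alpha)/x\to 0$. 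I do not expect a serious obstacle here: the only thing to watch is the bookkeeping that $\{\alpha t^2\}<\eta$ simultaneously linearizes all four sums, while the equidistribution input is entirely classical.
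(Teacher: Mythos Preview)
Your argument is correct and is essentially the paper's own proof: both argue by contraposition, scale the Euler-brick data by $t$ (the paper's $n$), observe that a single smallness condition on $\{\alpha t^2\}$ propagates to all seven fractional parts via $\{\alpha a^2t^2\}=a^2\{\alpha t^2\}$, and then handle rational $\alpha$ by divisibility and irrational $\alpha$ by Weyl equidistribution of $(\alpha t^2)_{t}$. The only cosmetic difference is that the paper first packages this mechanism as a general statement about homogeneous Diophantine systems (Theorem~\ref{Theorem-homogeneous}, with the key property (C3) $\eta(nx)\le n\,\eta(x)$ playing the role of your propagation step) and then specializes, whereas you go straight to the case at hand.
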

From Theorem~\ref{Theorem-infinitePEBs}, we can discover the importance of finding upper bounds for $\#T_{\leq x}(\alpha)$. Note that Theorem~\ref{Theorem-Upperbounds} implies $\# T_{\leq x}(\alpha)/x \leq  C(\log x)^{15}$. This does not reach \eqref{Equation-limitinfT}. We will show Theorem~\ref{Theorem-infinitePEBs} in Section~\ref{Section-Homogeneous}. More generally, we will examine a system of homogeneous Diophantine equations. For example, let $f_j(x_1,\ldots ,x_r)=\sum_{\nu=1}^r a^{(j)}_\nu x_\nu^{d_j}$ for every $1\leq j\leq K$, where all the coefficients $a^{(j)}_\nu$ are integers and the degrees $d_j$ are positive integers. Then for all $\alpha \in \mathbb{R}$ and $x_1,\ldots ,x_r \in \mathbb{R}$, we define 
\[
\floor{\alpha f_j}(x_1,\ldots ,x_r)= \sum_{\nu=1}^r a^{(j)}_\nu \floor{ \alpha x_\nu^{d_j}}.
\]
In Section~\ref{Section-Homogeneous}, we will prove that if there is $\alpha \in \mathbb{R}$ such that 
\begin{align*}
\liminf_{N\to \infty} \frac{1}{N} \#\{ &(y_1,\ldots ,y_r)\in ([1,N]\cap \mathbb{N})^r\colon  \\
&\floor {\alpha f_j}(y_1,\ldots ,y_r)=0 \text{ for all $1\leq j\leq K$}\}=0,
\end{align*}
then there is no $(x_1,\ldots, x_r)\in \mathbb{N}^r$ such that $f_j(x_1,\ldots ,x_r)=0$ for every $1\leq j\leq K$.

In the appendix, we will investigate the set 
\[
\upperS(\alpha) :=\{\lceil \alpha n^2 \rceil \colon n\in \mathbb{N}\}
\]
for every $\alpha\in (0,1)$. In the case of the floor function, for every fixed $n\in \mathbb{N}$, we have $\lim_{\alpha\to 1^-} \lfloor \alpha n^2 \rfloor=n^2-1$. Thus, there is an absolute gap between each of the terms in $S(\alpha)$ and $S$ even if $\alpha$ is very close to $1$. On the other hand, by considering the ceiling function, we have $\lim_{\alpha\to 1^-} \ceil{ \alpha n^2} =n^2$. Therefore, there is no gap between each of the terms in $\upperS(\alpha)$ and $S$ if $\alpha$ is sufficiently near $1$. From this observation, it seems to be more natural to study  $\upperS(\alpha)$ than $S(\alpha)$. However, the problems involving $\upperS(\alpha)$ are much more difficult. For example, we will prove that 
$\#T(\upperS (\alpha)) =\infty$ for almost all $\alpha\in (0,2/3)$. In this article, we are unable to prove the infinitude of $T(\upperS (\alpha))$ in the case $\alpha\in [2/3,1)$ due to technical problems.

\begin{notation}
Let $\mathbb{Z}_{\geq 0}$ be the set of all non-negative integers. For $x\in \mathbb{R}$, let $\{x\}$ denote the fractional part of $x$. For all intervals $I\subset \mathbb{R}$, let  $I_\mathbb{Z}=I\cap \mathbb{Z}$. Further, we write $[N]=[1,N]_\mathbb{Z}$ for every $N\in \mathbb{N}$. Let $P(x)$ be a statement depending on $x\in I$, where $I\subseteq \mathbb{R}$ is an interval. Then we say that $P(x)$ is true for almost all $x\in I$ if $\{x\in I \colon \text{$P(x)$ is not true} \}$ is a null set in the sense of the $1$-dimensional Lebesgue measure.   
 \end{notation}

%%%%%%%%%%%%%%%%%%%%%%%%%%%%%%%%%%%%%%%%%%
\section{Ingredients of the proofs} \label{Section-Outline}
Let us firstly discuss the sketch of proofs of Theorem~\ref{Theorem-lowerT-Q} and Theorem~\ref{Theorem-lowerT-almostall}. We make heuristic calculations to construct elements in $T(\alpha)$. From previous work \cite{Glasscock17, Saito22}, the following two items are important to prove the infinitude of $T(\PS(\alpha))$:
\begin{enumerate}
    \item[(i)]\label{Ingredient-DA} the rational approximations of $2^{1/\alpha}$;
    \item[(ii)]\label{Ingredient-AP} the existence of arithmetic progressions of length $3$ of $\PS(\alpha)$ with gap difference $d$ for sufficiently large $d$. 
\end{enumerate}
Indeed, let us now apply (i). Fix any suitable $\alpha\in (1,2)$. From (i), we find infinitely many pairs of integers $(p_n,q_n)$ such that $p_n/q_n$ is close to $2^{1/\alpha}$. Actually, by Dirichlet's approximation theorem, there exist infinitely many pairs $(p_n,q_n)$ of integers with $0<q_1<q_2<\cdots$ such that $|p_n/q_n - 2^{1/\alpha} | <q_n^{-2}$. Roughly speaking, $p_n^\alpha $ is close to $2q_n^\alpha$. Therefore, we deduce that
\begin{equation}\label{Equation-PS1}
\lfloor p_n^\alpha \rfloor = 2 \lfloor q_n^\alpha \rfloor
\end{equation}
by controlling the fractional parts of $p_n^\alpha $ and $q_n^\alpha$, which is the most difficult part of this proof. We omit the details. 

 Let us next apply (ii). By the result of Frantzikinakis and Wierdl \cite[Proposition~5.1]{FrantzikinakisWierdl}, for every integer $k\geq 3$ and $\alpha\in (1,2)$, there exists a sufficiently large $d_0=d_0(\alpha,k)>0$ such that for every $d\geq d_0$, we can find an arithmetic progression of $\PS(\alpha)$ with length $k$ and gap difference $d$. By this result, for sufficiently large $n\in \mathbb{N}$, there exists an arithmetic progression of $\PS(\alpha)$ with length $3$ and gap difference $\floor{q_n^\alpha}$. Therefore, there exists $r_n\in \mathbb{N}$ such that all of
 \begin{equation}\label{Terms-PSAPs}
 \floor{ r_n^\alpha},\quad  \floor{ r_n^\alpha}+ \floor{q_n^\alpha},\quad \floor{ r_n^\alpha}+ 2\floor{q_n^\alpha}
 \end{equation}
belong to $\PS(\alpha)$. Set $k=\ell = \floor{q_n^\alpha}$ and $m=\floor{ r_n^\alpha}$. Then all of 
\[
k,\quad \ell, \quad m,\quad k+\ell,\quad \ell +m, \quad m+k,\quad k+\ell+m  
\]
belong to $\PS(\alpha)$. 
Indeed, by the choice of $k$, $\ell$, $m$, it is clear that $k,\ell,m\in \PS(\alpha)$. In addition, by \eqref{Equation-PS1}, $k+\ell =2\lfloor q_n^\alpha \rfloor =\lfloor p_n^\alpha \rfloor\in \PS(\alpha)$. Further, since all terms in \eqref{Terms-PSAPs} belong to $\PS(\alpha)$, $\ell+m\: (=k+\ell)$ and $k+\ell+m$ also belong to $\PS(\alpha)$. Therefore, $(k,\ell ,m)$ or $(m,k,\ell )$ is in $T(\PS(\alpha))$. Hence, we deduce the infinitude of $T(\PS(\alpha))$ by substituting $n$ for every sufficiently large integer.  

Based on this discussion, the main ingredients in our case are the following:
\begin{enumerate}
    \item[(i')] the rational approximations of $\sqrt{2}$;
    \item[(ii')] the existence of arithmetic progressions of length $3$ of $S(\alpha)$ with gap difference $d$ for sufficiently large $d$. 
\end{enumerate}
Let us discuss (i'). It is well known that the continued fraction expansion of $\sqrt{2}$ is 
\[
1+ \frac{1 }{2+ \cfrac{1}{2+\cfrac{1}{\cdots } }  }=[1; 2,2,\cdots].
\]
For instance, see \cite[Example~1.3.4]{DajaniKraaikamp}. Let $G_n/P_n$ denote the irreducible fraction of $[\overbrace{1; 2,\cdots,2}^{n}]$. By the theory of continued fractions  \cite[(1.14)]{DajaniKraaikamp}, we obtain 
\[
|G_n/P_n -\sqrt{2} | <P_n^{-2}.
\]
Therefore, $G_n^2$ is close to $2P_n^2$. Actually, $G_n^2-2P_n^2=(-1)^n$ for every $n\in \mathbb{N}$, which we will show in Section~\ref{Section-Pell}. Furthermore, for every $n\in \mathbb{N}$, we observe that
\[
\frac{G_{n+1}}{ P_{n+1}} = 1+ \frac{1}{1+G_n/P_n }= \frac{G_n+2P_n}{G_n+P_n}. 
\]
Since $\gcd(G_n+2P_n, G_n+P_n)=1$, we have $G_{n+1}=G_n +2P_n$ and $P_{n+1}= G_n+P_n$ for every $n\in \mathbb{N}$. Hence, $G_n=P_{n+1}-P_n$ and 
\begin{equation}
P_{n+1}=2P_n+P_{n-1} 
\end{equation}
for every $n\in \mathbb{N}$, where we define $P_0=0$ and $P_1=1$. The sequence $(P_n)_{n\geq 0}$ is called the \textit{Pell sequence}. Instead of $p_n^\alpha$ and $q_n^\alpha$ as in previous studies, we consider $\alpha G_n^2$ and $\alpha P_n^2$. Fix any suitable $\alpha\in (0,1)$. By controlling the fractional parts of $\alpha G_n^2$ and $\alpha P_n^2$, we obtain 
\begin{equation}
\floor{\alpha G_n^2} = 2\floor{\alpha P_n^2}.  
\end{equation}
This corresponds to \eqref{Equation-PS1}. 

By discussing (ii'), let us next show that there exists $ H_n\in \mathbb{N}$ with $H_n \geq P_n$ such that all of 
\begin{equation}\label{Relation-HF}
\floor{\alpha H_n^2},\quad  \floor{\alpha H_n^2}+ \floor{\alpha P_n^2},\quad \floor{\alpha H_n^2}+ 2\floor{\alpha P_n^2}
\end{equation}
belong to $S(\alpha)$. If such $H_n$ exists, then we obtain 
\begin{equation}\label{Relation-kkm}
(\floor{\alpha P_n^2}, \floor{\alpha P_n^2}, \floor{\alpha H_n^2}  )\in T(\alpha).
\end{equation}
 
However, we cannot apply the result given by Frantzikinakis and Wierdl to $S (\alpha)$. As an alternative, we consider the equation 
\[
\alpha(P_n^2/2+j)^2 = \alpha (P_n^2/2)^2 +  j \alpha P_n^2 + \alpha j^2= \floor {\alpha (P_n^2/2)^2} +    j \floor{\alpha P_n^2}+ \delta (\alpha, n, j)  
\]
for every $j\in \{-1,0,1 \}$, where we define $\delta (\alpha, n, j)  =  \Pas{ \alpha (P_n^2/2)^2} + j \Pas{\alpha P_n^2}+ \alpha j^2 $. Remark that we choose $n\in \mathbb{N}$ such that $P_n$ is even since $P_n^2/2$ should be an integer. By investigating the joint distribution of $\Pas{ \alpha (P_n^2/2)^2}$ and $\Pas{\alpha P_n^2}$, we find that the set 
\[
A=\{n\in \mathbb{N}\colon 0\leq \delta (\alpha, n, j)<1 \text{ for all $j\in \{-1,0,1\}$ and $P_n$ is even}  \}
\]
has positive density. Further, for all $n\in A$ and $j\in \{-1,0,1\}$
\[
\floor{ \alpha(P_n^2/2+j)^2} = \floor {\alpha (P_n^2/2)^2} +    j \floor{\alpha P_n^2}.
\]
Thus, by setting $H_n=P_n^2/2-1$, all of \eqref{Relation-HF} are in $S(\alpha)$ for all $n\in A$. Therefore, for every $x>1$, by counting the number of $n\in A$ such that $H_n\leq x$, we obtain lower bounds for $\#T_{\leq x} (\alpha)$; in particular, we conclude the infinitude of $T(\alpha)$.

Note that it is not easy to control the fractional parts. In the proof of Theorem~\ref{Theorem-lowerT-Q}, we will use the properties of the Pell sequence. In Theorem~\ref{Theorem-lowerT-almostall}, we will apply the theory of the uniform distribution.   \\

The proof of Theorem~\ref{Theorem-infinitePEBs} is based on a different idea. Assume that $\# T(1)>0$. Then by this assumption, there exist $k, \ell, m, a, b, c, d\in \mathbb{N}$ such that 
\begin{gather*}
    k^2+\ell^2 =a^2,\quad \ell^2+m^2=b^2,\quad m^2+k^2=c^2,\quad k^2+\ell^2+m^2=d^2.
\end{gather*}
Fix an arbitrary $\alpha>0$. For every $n\in \mathbb{N}$, we set 
\[
\eta(\alpha,n) = \max_{x\in \{k, \ell, m, a, b, c, d\}} \Pas{\alpha (nx)^2}   .
\]
Then we observe that for all $n\in \mathbb{N}$,
\begin{gather*}
   \left| \floor {\alpha (nk)^2}+\floor{\alpha (n\ell)^2} - \floor{\alpha (na)^2} \right|\leq 3 \eta(\alpha, n),\\
   \left| \floor {\alpha (n\ell)^2}+\floor{\alpha (nm)^2} - \floor{\alpha (nb)^2} \right|\leq 3 \eta(\alpha, n),\\
   \left| \floor {\alpha (nm)^2}+\floor{\alpha (nk)^2} - \floor{\alpha (nc)^2} \right|\leq 3 \eta(\alpha, n),\\
   \left| \floor {\alpha (nk)^2}+\floor{\alpha (n\ell)^2}+\floor{\alpha (nm)^2} - \floor{\alpha (nd)^2} \right|\leq 4 \eta(\alpha, n).
\end{gather*}
Therefore, by the theory of uniform distribution, we can show that the density of the set of $n\in \mathbb{N}$ such that $0\leq \eta (\alpha, n)<1/4$ is positive;  such an $n$ satisfies 
\[
(\floor {\alpha (nk)^2}, \floor{\alpha (n\ell)^2}, \floor {\alpha (nm)^2}) \in T(\alpha).
\]
Hence, we conclude the positiveness of the limit inferior of $\#T_{\leq x}(\alpha)/x$. This idea comes from the work of Matsusaka and the second author \cite{MatsusakaSaito}. They showed that for any fixed $a,b,c\in \mathbb{N}$, there are uncountably many $\alpha>2$ such that the linear equation $ax+by=cz$ has infinitely many solutions $(x,y,z)\in \PS(\alpha)^3$. \\

The remainder of the article is organized as follows. In Section \ref{Section-Pell}, we describe the useful properties of the Pell sequence. In Section~\ref{Section-ProofTheoremCorollary},  we give proofs of Theorem~\ref{Theorem-lowerT-Q} and Corollary~\ref{Corollary-lowerT-positive}. In Section~\ref{Section-UniformDistribution}, we discuss the theory of uniform distribution of sequences and develop Theorem~\ref{Theorem-lowerT-almostall}. In Section~\ref{Section-Upperbounds}, we discuss upper bounds for $\#T_{\leq x}(\alpha)$.  In Section~\ref{Section-Homogeneous}, we provide a proof of Theorem~\ref{Theorem-infinitePEBs}. Finally, in Section~\ref{Section-FurtherDiscussion}, we present numerical results and offer relevant conjectures.

\section{Preparations for the Pell sequence}\label{Section-Pell}

In this section, we describe the basic properties of the Pell sequence. Since the definition of the Pell sequence is similar to the Fibonacci sequence, they share many features. We refer to a book written by Koshy \cite{Koshy} for details on the Fibonacci sequence. 

\begin{lemma}\label{Lemma-Pell} The Pell sequence has the following properties:
\begin{enumerate}
\item\label{Property-Pell0} for every $m\in \mathbb{Z}_{\geq 0}$ and $n\in  \mathbb{N}$, $P_{m+n}=P_{m+1}P_n +P_m P_{n-1}$;
\item\label{Property-Pell1} for every $a,b\in \mathbb{N}$, if $a\mid b$, then $P_a\mid P_b$;
\item\label{Property-Pell2} for every $n\in \mathbb{N}$, $P_{n+1}P_{n-1} -P_n^2= (-1)^n$; 
\item\label{Property-Pell3} for every $n\in \mathbb{Z}_{\geq 0}$, $P_n=\frac{1}{2\sqrt{2}} (\phi_P^n - (-\phi_P)^{-n} )$, where $\phi_P= 1+\sqrt{2}$.
\end{enumerate}
\end{lemma}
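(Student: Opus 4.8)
The plan is to deduce all four properties from the defining recurrence $P_{n+1}=2P_n+P_{n-1}$ (valid for $n\in\mathbb{N}$, with $P_0=0$, $P_1=1$, and the convention $P_{-1}=1$ forced by the recurrence) together with its companion matrix $M=\left(\begin{smallmatrix}2&1\\1&0\end{smallmatrix}\right)$. First I would settle (4): the characteristic equation of the recurrence is $x^2-2x-1=0$, with roots $1\pm\sqrt2$, and since $(1+\sqrt2)(1-\sqrt2)=-1$ these roots are precisely $\phi_P$ and $-\phi_P^{-1}$. Writing the general solution as $P_n=A\phi_P^n+B(-\phi_P)^{-n}$ and imposing the initial conditions $P_0=0$, $P_1=1$ gives a $2\times2$ linear system in $A,B$; solving it, using $\phi_P+\phi_P^{-1}=2\sqrt2$, yields $A=-B=1/(2\sqrt2)$, which is exactly the stated formula. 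Equivalently one may diagonalize $M$; either way, this is the only step with a genuine idea, namely identifying the roots.

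Next, for (1) I would fix $m\in\mathbb{Z}_{\geq0}$ and induct on $n$: the cases $n=0$ and $n=1$ follow directly from $P_0=0$, $P_1=1$, $P_{-1}=1$, while the inductive step is the one-line computation
\[
P_{m+n+1}=2P_{m+n}+P_{m+n-1}=P_{m+1}(2P_n+P_{n-1})+P_m(2P_{n-1}+P_{n-2})=P_{m+1}P_{n+1}+P_mP_n,
\]
using the recurrence and the inductive hypothesis for $n-1$ and $n$. Property (2) is then immediate: writing $b=ak$ and inducting on $k$, the case $k=1$ is trivial, and for the step one applies (1) with $(m,n)=(ak,a)$ to obtain $P_{a(k+1)}=P_{ak+1}P_a+P_{ak}P_{a-1}$, where the first summand is divisible by $P_a$ outright and the second by the inductive hypothesis.

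Finally, for (3) the slickest route is the matrix identity $M^n=\left(\begin{smallmatrix}P_{n+1}&P_n\\P_n&P_{n-1}\end{smallmatrix}\right)$, which is a trivial induction on $n$; taking determinants and using $\det M=-1$ gives $P_{n+1}P_{n-1}-P_n^2=(-1)^n$ at once. Alternatively one can prove (3) directly by induction on $n$ (base case $n=1$: $P_2P_0-P_1^2=-1$; the step substitutes $P_{n-1}=P_{n+1}-2P_n$), or simply plug the Binet formula from (4) into $P_{n+1}P_{n-1}-P_n^2$ and simplify using $\phi_P\cdot(-\phi_P^{-1})=-1$. Every step here is a routine induction or an elementary computation, so there is no genuine obstacle; the only points that need attention are the boundary conventions (in particular the value $P_{-1}=1$ and treating $n=0,1$ as base cases) and keeping the algebra in the proofs of (4) and (3) tidy.
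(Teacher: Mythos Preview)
Your proposal is correct and follows essentially the same approach as the paper: parts (1), (2), and (4) are handled by the same inductions and the same characteristic-equation argument (the paper's induction for (1) shifts $m\mapsto m+1$ rather than invoking two consecutive cases and $P_{-1}=1$, but this is cosmetic). The one mild difference is in (3), where the paper gives only the direct induction you list as an alternative, whereas your primary route via $\det M^n=(-1)^n$ with $M^n=\left(\begin{smallmatrix}P_{n+1}&P_n\\P_n&P_{n-1}\end{smallmatrix}\right)$ is a clean and equally valid variant.
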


\begin{proof}
Let us firstly show \eqref{Property-Pell0}. It follows that $P_{m+1}P_1 +P_mP_0 = P_{m+1}$ for all $m\in \mathbb{Z}_{\geq 0}$. Therefore, \eqref{Property-Pell0} is true for $n=1$. Assume that \eqref{Property-Pell0} is true for all $m\in \mathbb{N}$ and for some $n=k$, where $k\geq 1$. Then by the inductive hypothesis, 
\begin{align*}
P_{m+k+1}&= P_{(m+1)+k}=P_{m+2}P_k +P_{m+1}P_{k-1}= 2P_{m+1}P_k + P_mP_k+ P_{m+1}P_{k-1}\\
&= P_{m+1} (2P_{k} +P_{k-1} ) +P_mP_k= P_{m+1} P_{k+1}   + P_m P_{k} . 
\end{align*}
Hence, we conclude \eqref{Property-Pell0}. 

Let us secondly show \eqref{Property-Pell1}. From $a\mid b$, there exists an integer $c\geq 1$ such that $b=ac$. Then by \eqref{Property-Pell0}, we obtain
\begin{align*}
   P_b&= P_{ac}=P_{a(c-1)+a}= P_{a(c-1)+1} P_a + P_{a(c-1)} P_{a-1}  \\
   &\equiv  P_{a(c-1)} P_{a-1} \equiv \cdots \equiv P_a P_{a-1}^{c-1}\equiv 0 \mod P_a,
\end{align*}
which means that $P_a \mid P_b$. 

Let us next show \eqref{Property-Pell2}. It follows that $P_{2}P_{0} -P_1^2= -P_1^2=-1$. Thus, \eqref{Property-Pell2} is true for $n=1$. Assume that  \eqref{Property-Pell2} is true for $n=k$, where $k\geq 1$. Then 
\begin{align*}
P_{k+2} P_k &=  (2P_{k+1}+P_k) P_k= 2P_{k+1} P_k + P_k^2   \\
&= 2P_{k+1} P_k + P_{k+1}P_{k-1} +(-1)^{k+1} \\
&= P_{k+1} (2 P_k+P_{k-1}) +(-1)^{k+1}  = P_{k+1}^2 +(-1)^{k+1}.  
\end{align*}
By induction, we conclude \eqref{Property-Pell2}. 

Let us lastly show \eqref{Property-Pell3}. Since $x^2-2x-1=0$ if and only if $x\in \{\phi_P , -\phi_P^{-1}\}$, there exist $a,b\in \mathbb{R}$ such that $P_n= a\phi_P^n +b(-\phi_P)^{-n}$ for all $n\geq 0$. By substituting $n=0$ and $n=1$, we obtain $0=a+b$ and $1=a\phi_P+b(-\phi_P)^{-1}$. This implies that $a=-b=1/(2\sqrt{2})$. 
\end{proof}

The opposite direction of the implication \eqref{Property-Pell1} is also true; that is, if $P_a\mid P_b$, then $a\mid b$. However, we do not use this implication in this article.  Note that the Fibonacci sequence also satisfies \eqref{Property-Pell0} to \eqref{Property-Pell2}. These facts can be seen in \cite[Corollary~32.2, Theorem~16.1, Theorem~5.3]{Koshy}.

\begin{lemma}\label{Lemma-PellEquation}
For every $n\in \mathbb{Z}_{\geq 0}$, we have $(P_{n+1} -P_n)^2- 2 P_n^2 = (-1)^n$.
\end{lemma}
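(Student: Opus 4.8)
The plan is to reduce the identity directly to part~\eqref{Property-Pell2} of Lemma~\ref{Lemma-Pell} together with the defining recurrence $P_{n+1}=2P_n+P_{n-1}$. First I would dispose of the boundary case $n=0$ by hand: there $P_1-P_0=1$ and $2P_0^2=0$, so the left-hand side equals $1=(-1)^0$, as required.

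For $n\geq 1$, the recurrence gives $P_{n+1}-P_n=P_n+P_{n-1}$, hence
\[
(P_{n+1}-P_n)^2-2P_n^2=(P_n+P_{n-1})^2-2P_n^2=P_{n-1}^2+2P_nP_{n-1}-P_n^2.
\]
The key observation is that $P_{n-1}^2+2P_nP_{n-1}=P_{n-1}(P_{n-1}+2P_n)=P_{n-1}P_{n+1}$, again by the recurrence, so the right-hand side above collapses to $P_{n+1}P_{n-1}-P_n^2$, which equals $(-1)^n$ by part~\eqref{Property-Pell2} of Lemma~\ref{Lemma-Pell}. This finishes the proof.

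An alternative, if one prefers to avoid the case split, is to use the Binet-type formula in part~\eqref{Property-Pell3} of Lemma~\ref{Lemma-Pell}: a short computation using $\phi_P-1=\sqrt{2}$ gives $P_{n+1}-P_n=\tfrac12\bigl(\phi_P^n+(-\phi_P)^{-n}\bigr)$, whence $(P_{n+1}-P_n)^2=\tfrac14\bigl(\phi_P^{2n}+2(-1)^n+(-\phi_P)^{-2n}\bigr)$ since $\phi_P\cdot(-\phi_P^{-1})=-1$; comparing with $2P_n^2=\tfrac14\bigl(\phi_P^{2n}-2(-1)^n+(-\phi_P)^{-2n}\bigr)$ yields the claim for all $n\geq 0$ simultaneously.

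There is no serious obstacle here; the only point requiring care is that part~\eqref{Property-Pell2} of Lemma~\ref{Lemma-Pell} is stated for $n\in\mathbb{N}$, so the case $n=0$ must be verified separately (or absorbed via the Binet-formula approach). I expect the recurrence-based argument to be the one worth recording, as it is shortest and reuses the identities just established.
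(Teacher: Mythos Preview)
Your proof is correct and follows essentially the same strategy as the paper: expand the square, use the Pell recurrence once, and reduce to the Cassini-type identity of Lemma~\ref{Lemma-Pell}\eqref{Property-Pell2}. The only difference is the direction in which you push the recurrence: you rewrite $P_{n+1}-P_n=P_n+P_{n-1}$ and land on $P_{n+1}P_{n-1}-P_n^2$, whereas the paper keeps $P_{n+1}-P_n$ as is, expands to $P_{n+1}^2-P_n(2P_{n+1}+P_n)=P_{n+1}^2-P_nP_{n+2}$, and applies \eqref{Property-Pell2} at index $n+1$. Since $n+1\in\mathbb{N}$ whenever $n\in\mathbb{Z}_{\geq 0}$, the paper's orientation makes the separate treatment of $n=0$ unnecessary; that is the only (cosmetic) advantage of their version over yours.
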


\begin{proof}
Fix an arbitrary $n\in \mathbb{Z}_{\geq 0}$. Then we obtain
\begin{align*}
    (P_{n+1} -P_n)^2- 2 P_n^2&= P_{n+1}^2 -2 P_{n+1} P_n +P_n^2 -2P_n^2 = P_{n+1}^2 -2 P_{n+1} P_n -P_n^2\\
    &= P_{n+1}^2 -P_n (2 P_{n+1}  +P_n) = P_{n+1}^2-P_n P_{n+2} =  (-1)^{n},   
\end{align*}
where we apply \eqref{Property-Pell2} in Lemma~\ref{Lemma-Pell} to the last equation. 
\end{proof}

\begin{lemma}\label{Lemma-divisibility}
For every $q\in \mathbb{N}$, there exists $r=r(q)\in [2, \lceil\sqrt{2} q\rceil]_{\mathbb{Z}}$ such that $q\mid P_r$. 
\end{lemma}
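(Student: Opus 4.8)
The plan is to take for $r$ the rank of apparition $\omega(q):=\min\{r\in\N : q\mid P_r\}$ and to prove the slightly stronger estimate $\omega(q)\le \tfrac43 q$, which lies strictly below $\sqrt2\,q\le\lceil\sqrt2\,q\rceil$. Since $P_1=1$, we have $\omega(q)\ge 2$ whenever $q\ge 2$, so $r=\omega(q)$ then lands in $[2,\lceil\sqrt2\,q\rceil]_{\mathbb Z}$; and for $q=1$ one simply takes $r=2$, because $1\mid P_2=2$ and $\lceil\sqrt2\rceil=2$. The first step is to reduce $\omega(q)$ to prime powers. Writing $r=\omega(p^k)s+u$ with $0\le u<\omega(p^k)$, expanding $P_{\omega(p^k)s+u}$ by Lemma~\ref{Lemma-Pell}\eqref{Property-Pell0}, and using $\gcd(P_n,P_{n+1})=1$ (immediate from Lemma~\ref{Lemma-Pell}\eqref{Property-Pell2}) together with Lemma~\ref{Lemma-Pell}\eqref{Property-Pell1}, one sees that $p^k\mid P_r$ holds exactly when $\omega(p^k)\mid r$; combining this over the maximal prime-power divisors $p^k\|q$ via the Chinese Remainder Theorem gives $\omega(q)=\operatorname{lcm}_{p^k\|q}\omega(p^k)$, and in particular $\omega(q)$ is finite once each $\omega(p^k)$ is.

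The heart of the matter is a unit-group bound for $\omega(p^k)$. An easy induction from $G_{n+1}=G_n+2P_n$, $P_{n+1}=P_n+G_n$ gives $\phi_P^n=G_n+P_n\sqrt2$, and $N(\phi_P)=(1+\sqrt2)(1-\sqrt2)=-1$, so $\phi_P$ is a unit of the finite ring $R:=\Z[\sqrt2]/p^k\Z[\sqrt2]$ and $\phi_P^r$ lies in the image of $\Z/p^k\Z$ exactly when $p^k\mid P_r$. Since a unit of $R$ lying in $\Z/p^k\Z$ is necessarily a unit of $\Z/p^k\Z$, this identifies $\omega(p^k)$ with the order of the class of $\phi_P$ in the finite abelian group $R^\times/(\Z/p^k\Z)^\times$; hence $\omega(p^k)$ divides $\epsilon(p^k):=|R^\times|/\varphi(p^k)$. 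Reducing modulo $p$ (the ideal $pR$ is nilpotent, so units lift) yields $|R^\times|=|(\Z[\sqrt2]/p)^\times|\,p^{2(k-1)}$, and because $\Z[\sqrt2]/p\cong\mathbb F_p[x]/(x^2-2)$ is $\mathbb F_p\times\mathbb F_p$, $\mathbb F_{p^2}$, or $\mathbb F_2[x]/(x^2)$ according as $\left(\tfrac{2}{p}\right)=1$ (i.e.\ $p\equiv\pm1\bmod 8$), $\left(\tfrac{2}{p}\right)=-1$, or $p=2$, one computes $\epsilon(p^k)=(p-1)p^{k-1}$, $(p+1)p^{k-1}$, or $2^k$ respectively. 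In all three cases $\epsilon(p^k)$ is an even integer and $\epsilon(p^k)\le\tfrac43 p^k$.

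Finally one combines. Let $t\ge1$ be the number of distinct primes dividing $q$. From $\omega(q)=\operatorname{lcm}_{p^k\|q}\omega(p^k)$ and $\omega(p^k)\mid\epsilon(p^k)$ we get $\omega(q)\mid\operatorname{lcm}_{p^k\|q}\epsilon(p^k)$, and since the least common multiple of two even numbers is even and at most half of their product, a short induction gives
\[
\omega(q)\ \le\ \operatorname{lcm}_{p^k\|q}\epsilon(p^k)\ \le\ 2^{1-t}\prod_{p^k\|q}\epsilon(p^k)\ \le\ 2^{1-t}\prod_{p^k\|q}\tfrac43 p^k\ =\ 2\left(\tfrac23\right)^{t}q\ \le\ \tfrac43 q ,
\]
so $\omega(q)<\sqrt2\,q$, as wanted.

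I expect the real difficulty to be keeping the constant below $\sqrt2$ rather than any single step being hard. For odd $p$ the bound $\omega(p^k)\le\epsilon(p^k)\le\tfrac43 p^k$ is comfortable, but for $p=2$ it is important to exploit that $2$ ramifies in $\Z[\sqrt2]$, so that $\Z[\sqrt2]/2$ has only two units and $\epsilon(2^k)=2^k$ (equivalently, one may replace this step by the elementary computation $v_2(P_r)=v_2(r)$ coming from $P_{2n}=2P_nG_n$ with $G_n$ odd and $P_{2n+1}=P_{n+1}^2+P_n^2$); the coarser bound $3\cdot 2^{k-1}$ would already exceed $\lceil\sqrt2\cdot 2^k\rceil$ for $k\ge 4$. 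Likewise one must use the least common multiple of the local ranks, not their product, and run the estimate through the even numbers $\epsilon(p^k)$ rather than through the $\omega(p^k)$: the local ranks themselves need not be even (for instance $\omega(5)=3$), so it is precisely the guaranteed factor $2$ in each $\epsilon(p^k)$ that makes the product-to-lcm saving large enough.
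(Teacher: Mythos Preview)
Your argument is correct, and it actually yields the sharper bound $\omega(q)\le\tfrac43 q<\sqrt2\,q$. The structure is sound: the reduction to prime powers via $\omega(q)=\operatorname{lcm}_{p^k\|q}\omega(p^k)$ follows from Lemma~\ref{Lemma-Pell}\eqref{Property-Pell0}--\eqref{Property-Pell2} as you indicate; the identification of $\omega(p^k)$ with the order of $\phi_P$ in $(\Z[\sqrt2]/p^k)^\times/(\Z/p^k)^\times$ is justified by $\phi_P^n=G_n+P_n\sqrt2$ and $G_n^2-2P_n^2=(-1)^n$ (so $G_n$ is a unit whenever $p^k\mid P_n$); the case split on the residue of $2$ modulo $p$ gives the three values of $\epsilon(p^k)$ correctly, each even and at most $\tfrac43 p^k$ (with equality at $p=3$, where indeed $\omega(3)=4$); and the lcm-of-even-numbers inequality $\operatorname{lcm}(a_1,\dots,a_t)\le 2^{1-t}\prod a_i$ closes the estimate.

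However, this is a very different route from the paper's. The paper gives a short, completely elementary pigeonhole argument: the map $n\mapsto(\overline{P_{n+1}},\overline{P_n})\in(\Z/q\Z)^2$ on $[0,\lceil\sqrt2\,q\rceil]_\Z$ has more inputs than the $q^2$ possible outputs, so two indices $n<m$ collide, and then the recurrence forces $(\overline{P_1},\overline{P_0})=(\overline{P_{m-n+1}},\overline{P_{m-n}})$, i.e.\ $q\mid P_{m-n}$ with $m-n\le\lceil\sqrt2\,q\rceil$. That proof needs no arithmetic in $\Z[\sqrt2]$, no quadratic residues, no multiplicativity of the rank of apparition, and no lcm gymnastics; it is a few lines. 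Your approach buys a better constant and connects the problem to the unit group of $\Z[\sqrt2]$, which is conceptually pleasant, but at the cost of substantially more machinery than the lemma requires for its application in the paper.
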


\begin{proof}
Fix any $q\in \mathbb{N}$. In the case $q=1$, we can take $r=2$. Thus, we may assume $q\geq 2$.  
For every $P\in [0,\infty)_\mathbb{Z}$, there exist a unique pair of integers  $s$ and $t$ such that $P=qs+t$ and $0\leq t<q$. Then let $\overline{P} =t$. For every $n\in [0,\infty)_\mathbb{Z}$,
\[
(\overline {P_{n+1}}, \overline {P_n})\in  ([0,q-1]_\mathbb{Z})^2.
\]
The number of pairs $(n,m)$ of integers such that $0\leq n<m \leq \ceil{\sqrt{2}q} $ is equal to 
\[
\frac{1}{2}(\lceil \sqrt{2}q\rceil +1)  \lceil \sqrt{2}q\rceil > \frac{1}{2}\cdot 2 q^2 =q^2.
\]
By the pigeonhole principal, there exist integers $0\leq n<m \leq \lceil \sqrt{2}q\rceil$  such that 
\[
(\overline {P_{n+1}}, \overline {P_n})=(\overline {P_{m+1}}, \overline {P_m}).
\]
Therefore, by induction, we have $(\overline {P_{n+1+k}}, \overline {P_{n+k}})=(\overline {P_{m+1+k}}, \overline {P_{
m+k}}) $ for every $k\geq -n$.  By substituting $k=-n$, we obtain $(1,0)=(\overline {P_{1}}, \overline {P_0})=(\overline {P_{m+1-n}}, \overline {P_{m-n}})$. Set $r=m-n$. Then $r\in  [1,\ceil{\sqrt{2}q}]_{\mathbb{Z}}$. We deduce that $q \mid P_r$.  In addition, $q\geq 2$ and $q \mid P_r$ imply $r\geq 2$. Therefore, $r\in [2, \lceil\sqrt{2} q\rceil]_{\mathbb{Z}}$.
\end{proof}

\section{Proof of Theorem~\ref{Theorem-lowerT-Q} and Corollary~\ref{Corollary-lowerT-positive}}\label{Section-ProofTheoremCorollary}

\begin{proof}[Proof of Theorem~\ref{Theorem-lowerT-Q}]
Fix any $\alpha\in (0,1)\cap \mathbb{Q}$. Let $1\leq p<q$ be relatively prime integers satisfying $\alpha=p/q$. Let $r=r(q)\in [2,\ceil{\sqrt{2}q}]_{\mathbb{Z}}$ be as in Lemma~\ref{Lemma-divisibility}. It suffices to show that for every $n\in \mathbb{N}$,
\begin{equation}\label{Relation-rational}
\left(\left\lfloor \frac{p}{q}\: P_{2rn}^2 \right\rfloor,\ \left\lfloor \frac{p}{q}\: P_{2rn}^2 \right\rfloor,\   \left\lfloor \frac{p}{q} \:(P_{2rn}^2/2-1)^2 \right\rfloor \right)\in T(p/q).
\end{equation}
Indeed, by $r\geq 2$, we see that $P_{2rn}\geq  P_{4}=12$, and hence $P_{2rn}\leq P^2_{2rn}/2-1$. In addition, it follows from \eqref{Property-Pell3} in Lemma~\ref{Lemma-Pell} that for every $n\in \mathbb{N}$,
\begin{equation}\label{Inequality-Pphip}
 P_{2rn}\leq \frac{P_{2rn}^2}{2}-1 \leq \frac{1}{16}\cdot \phi_P^{4rn} \leq \frac{1}{16}\cdot \phi_P^{4\lceil\sqrt{2}q\rceil n}. 
\end{equation}
Further, we find that 
\begin{equation}\label{Inequality-EquivPhip}
\phi_P^{4\lceil \sqrt{2}q\rceil n}/16\leq x \iff n\leq \log(16x)/(4\lceil\sqrt{2}q \rceil\log\phi_P).
\end{equation}
Therefore, if \eqref{Relation-rational} is true, by combining \eqref{Inequality-Pphip} and \eqref{Inequality-EquivPhip}, we deduce that
\[
\# T_{\leq x}(p/q) \geq \floor{\log (16x)/ (4 \lceil \sqrt{2}q \rceil \log\phi_P)}
\]
for all $x>1$.

Let us show \eqref{Relation-rational}. Fix any $n\in \mathbb{N}$. Let $x=P_{2rn+1}-P_{2rn}$ and $y=P_{2rn}$. Then Lemma~\ref{Lemma-PellEquation} yields $x^2-2y^2=1$. 
By the choice of $r=r(q)$, we have $q\mid P_r$, and by \eqref{Property-Pell1} in Lemma~\ref{Lemma-Pell}, $P_r\mid P_{2rn}=y$ holds. Thus, we obtain $q\mid y$.
%By \eqref{Property-Pell1} in Lemma~\ref{Lemma-Pell}, we have $q\mid y$. 
Hence, from $0<p/q<1$, 
\begin{equation}\label{equation-pell}
\left\lfloor  \frac{p}{q}\: x^2 \right\rfloor= \left\lfloor  \frac{p}{q}\: (2y^2+1) \right\rfloor=  2\cdot \frac{p}{q}\: y^2+\left\lfloor \frac{p}{q} \right\rfloor= 2\left\lfloor  \frac{p}{q}\: y^2 \right\rfloor. 
\end{equation}
In addition, for every $j\in \{-1,0,1\}$,
\[
\frac{p}{q}\: (y^2/2+j)^2 = \frac{p}{q}\: \frac{y^4}{4} + j \frac{p}{q}\: y^2 + \frac{p}{q} j^2. 
\]
By $q\mid y$ and $4\mid y^2$, $(p/q) (y^4/4)$ and $(p/q) y^2$ are integers. Further, $(p/q)j^2\in [0,1)$ for every $j\in \{-1,0,1\}$. Hence, for every $j\in \{-1,0,1\}$,
\begin{equation}\label{Equation-AP}
\left\lfloor \frac{p}{q}\: (y^2/2+j)^2 \right\rfloor= \left\lfloor \frac{p}{q}\: \frac{y^4}{4}\right\rfloor + j\left\lfloor  \frac{p}{q}\: y^2 \right\rfloor.
\end{equation}
Let $k=\ell= \lfloor (p/q)y^2 \rfloor$ and $m=\lfloor (p/q) (y^2/2-1)^2 \rfloor $.
By combining \eqref{equation-pell} and \eqref{Equation-AP}, we observe that $k,\ell,m\in S (p/q)$ and 
\begin{gather*}
    k+\ell=  \left\lfloor  \frac{p}{q}\: x^2 \right\rfloor \in S(p/q), \quad \ell+m=m+k=  \left\lfloor \frac{p}{q}\: (y^2/2)^2 \right\rfloor\in S(p/q), \\
    k+\ell +m= \left\lfloor \frac{p}{q}\: (y^2/2+1)^2 \right\rfloor \in S(p/q).
\end{gather*}
Therefore, $(k,\ell,m)=(k,k,m) \in T(p/q)$, which implies \eqref{Relation-rational}.
\end{proof}

\begin{lemma}\label{Lemma-IntegerParts}
Let $\alpha>0$ and $W>0$ be real numbers. Let $\delta=\delta(\alpha,W)= (1-\{ \alpha W\})/W$. Then for every $\beta\in [\alpha, \alpha+\delta)$, we have $\lfloor \alpha W \rfloor=\lfloor \beta W \rfloor$.
\end{lemma}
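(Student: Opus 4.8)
The plan is to bound $\beta W$ both below and above by consecutive integers, namely by $\lfloor \alpha W\rfloor$ and $\lfloor\alpha W\rfloor+1$, and then read off the value of $\lfloor \beta W\rfloor$. The key identity to keep in mind is the decomposition $\alpha W=\lfloor \alpha W\rfloor+\{\alpha W\}$, which is what makes the specific choice of $\delta$ work out.

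First I would establish the lower bound. Since $\beta\geq \alpha$ and $W>0$, we have $\beta W\geq \alpha W\geq \lfloor \alpha W\rfloor$. Next I would establish the upper bound: for $\beta<\alpha+\delta$ we get, using $W>0$ and the definition of $\delta$,
\[
\beta W<(\alpha+\delta)W=\alpha W+\delta W=\alpha W+\bigl(1-\{\alpha W\}\bigr)=\lfloor \alpha W\rfloor+1.
\]
Combining the two bounds gives $\lfloor \alpha W\rfloor\leq \beta W<\lfloor \alpha W\rfloor+1$, and since $\lfloor \alpha W\rfloor$ is an integer this forces $\lfloor \beta W\rfloor=\lfloor \alpha W\rfloor$, which is the claim.

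There is essentially no obstacle here: the statement is an immediate consequence of the defining property of the floor function together with the choice of $\delta$ as exactly the length of the interval of multipliers for which $\alpha W$ can be increased without its integer part changing. The only point deserving a word of care is that $\delta\geq 0$ (so that the interval $[\alpha,\alpha+\delta)$ is nonempty and contains $\alpha$), which holds because $\{\alpha W\}\in[0,1)$ and $W>0$; in the degenerate case $\{\alpha W\}=0$ one could still have $\delta=1/W>0$, so the interval is always nontrivial. This lemma will presumably be used to pass from a rational or "good" $\alpha$ to a whole neighbourhood of multipliers, e.g.\ in the proof of Corollary~\ref{Corollary-lowerT-positive}, by applying it simultaneously to the finitely many values $W$ arising as the relevant squares $P_{2rn}^2$, $(P_{2rn}^2/2-1)^2$, etc., and intersecting the resulting intervals.
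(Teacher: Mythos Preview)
Your proof is correct and follows essentially the same approach as the paper: both arguments decompose $\alpha W=\lfloor\alpha W\rfloor+\{\alpha W\}$ and use the definition of $\delta$ to sandwich $\beta W$ in $[\lfloor\alpha W\rfloor,\lfloor\alpha W\rfloor+1)$. Your remark about the intended application to Corollary~\ref{Corollary-lowerT-positive} is also on target.
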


\begin{proof}
It follows that $\beta W= \alpha W + (\beta-\alpha)W =  \lfloor \alpha W \rfloor +\{ \alpha W\} + (\beta-\alpha)W$. Further, by the definitions of $\beta$ and $\delta$, we have
\[
0\leq \{ \alpha W\} + (\beta-\alpha)W< \{ \alpha W\} + \delta W=1,
\]
which yields that $\lfloor \beta W\rfloor =\lfloor \alpha W\rfloor$.  
\end{proof}

\begin{proof}[Proof of Corollary~\ref{Corollary-lowerT-positive}]
Let $A=\{\alpha \in [0,1] \colon \#T(\alpha)=\infty \} $. By Lemma~\ref{Lemma-IntegerParts} and \eqref{Relation-rational}, for every $\alpha\in A$, there exists a sufficiently small $\epsilon(\alpha)>0$ such that each $\gamma\in (\alpha,\alpha+\epsilon(\alpha))$ satisfies $\# T(\gamma)>0$. Then let 
\[
U=\left(\bigcup_{\alpha \in A} (\alpha, \alpha+\epsilon(\alpha)) \right)\cap (0,1).
\]
We find that $U$ is an open and dense set in $(0,1)$ since $\mathbb{Q}\subseteq A $ by Theorem~\ref{Theorem-lowerT-Q}. This implies Corollary~\ref{Corollary-lowerT-positive}.  
%All connected components of $B$ are intervals by this definition. Thus, the cardinality of connected components of $B$ is at most countable; any closures of consecutive components of $B$ intersect in their boundaries  Hence, if $\alpha\in [0,1]$ satisfies $\# T(\alpha)=0$, then $\alpha$ belongs to the boundary of some connected component of $B$.   
\end{proof}

\section{Uniform Distribution and Proof of Theorem~\ref{Theorem-lowerT-almostall}}\label{Section-UniformDistribution}

In this section, we address the theory of multi-dimensional uniform distributions and describe useful lemmas to prove Theorem~\ref{Theorem-lowerT-almostall}. Let $d\in \mathbb{N}$. For every $\mathbf{x}=(x_1,\ldots, x_d)\in \mathbb{R}^d$, we define
\[
\{\mathbf{x}\}=(\{x_1\},\ldots, \{x_d\}).  
\]
Let $(\mathbf{x}_{n})_{n\in \mathbb{N}}$ be a sequence of $\mathbb{R}^d$. We say that $(\mathbf{x}_{n})_{n\in \mathbb{N}}$ is \textit{uniformly distributed modulo $1$ on $[0,1)^d$} if for every $0\leq a_i<b_i\leq 1$ $(1\leq i\leq d)$, we have
\[
\lim_{N\to \infty}  \frac{\#\left\{ n\in [N] \colon \{\mathbf{x}_n\} \in \prod _{i=1}^d [a_i,b_i) \right\}}{N} =  \prod_{i=1}^d (b_i-a_i)  .
\]
We define $e(x)$ by $e^{2\pi \sqrt{-1} x }$ for all $x\in \mathbb{R}$, where $\sqrt{-1}$ denotes the imaginary unit. The following properties are equivalent:
\begin{enumerate}
\item \label{Property-UDM-1}  $(\mathbf{x}_{n})_{n\in \mathbb{N}}$ is uniformly distributed modulo $1$;
\item \label{Property-UDM-2} for all $\mathbf{h}\in \mathbb{Z}^d \setminus \{(0,\ldots, 0) \}$, we have
\[
\lim_{N\to \infty} \frac{1}{N} \sum_{n=1}^N e(\langle \mathbf{h}, \mathbf{x}_n \rangle ) =0,   
\]
where $\langle \cdot , \cdot \rangle$ denotes the standard inner product;
\item \label{Property-UDM-3} for every Jordan measurable set $X\subseteq [0,1)^d$ we have
\[
\lim_{N\to \infty}  \frac{\#\left\{ n\in [N] \colon \{\mathbf{x}_n\} \in X \right\}}{N} =  \mu_d(X),
\]
where $\mu_d$ denotes the Jordan measure on $\mathbb{R}^d$.    
\end{enumerate}
 The equivalence \eqref{Property-UDM-1} $\Leftrightarrow$ \eqref{Property-UDM-2} is referred to as  \textit{Weyl's criterion on} $[0,1)^d$.  %By Weyl's criterion, we can easily show that $(\alpha n)_{n=1}^\infty$ is uniformly distributed modulo $1$ for every $\alpha\in \mathbb{R}\setminus \mathbb{Q}$. Indeed, for every non-zero $h\in \mathbb{Z}$, we obtain 
%\begin{equation}\label{Inequality-exponential-sums}
%\left|\sum_{n=1}^N e(h\alpha n)\right|= \left| \frac{e(\alpha h) (1-e(\alpha hN))}{1-e(\alpha h)}\right|\leq \frac{2}{|1-e(\alpha h) | }
%\end{equation}
%since $\alpha h\notin \mathbb{Z}$ by the irrationality of $\alpha$. By applying Weyl's criterion, $(\alpha n)_{n=1}^\infty$ is uniformly distributed modulo $1$. 
We refer the reader to \cite[Section~6 in Chapter~1]{KuipersNiederreiter} for more details on the theory of multi-dimensional uniform distribution of sequences.

\begin{lemma}\label{Lemma-almostall}
Let $A$ be a set, and $\mu$ be a measure on $A$. Let $(X_n)_{n\geq 1}$ be a bounded sequence of measurable functions defined on $A$. If the series
\[
\sum_{N\geq 1} \frac{1}{N} \int_A \left| \frac{1}{N} \sum_{1\leq n\leq N} X_n  \right|^2 \mathrm{d}\mu  
\]
converges, then $\mu$-almost all elements $\alpha$ of $A$ satisfy 
\[
 \lim_{N\to \infty} \frac{1}{N} \sum_{1\leq n\leq N} X_n (\alpha) =0.
\]
\end{lemma}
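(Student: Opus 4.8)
The plan is to prove this via a Borel--Cantelli type argument combined with the fact that partial averages along sparse subsequences control all partial averages when the sequence $X_n$ is bounded. First I would fix a threshold exponent and pass from all $N$ to the subsequence $N_k = \lfloor k^{1+\rho} \rfloor$ for a small $\rho>0$ (or, in the cleanest version, $N_k = k^2$), so that the series $\sum_k 1/N_k$ is comparable to a convergent series. Writing $S_N(\alpha) = \frac{1}{N}\sum_{1\le n\le N} X_n(\alpha)$, the hypothesis gives $\sum_{N\ge 1} \frac{1}{N}\int_A |S_N|^2\,\mathrm{d}\mu < \infty$, and in particular $\sum_{k} \int_A |S_{N_k}|^2 \,\mathrm{d}\mu$ converges once we observe that the subsequence can be chosen so that the number of indices $N$ in each dyadic-type block is controlled; more directly, since $\sum_N \frac1N \int_A|S_N|^2 \,\mathrm d\mu$ converges, so does $\sum_k \int_A |S_{N_k}|^2\,\mathrm d\mu$ because $\sum_k 1/N_k$ converges and we only need $\int_A|S_{N_k}|^2\,\mathrm d\mu \le N_k \cdot \frac{1}{N_k}\int_A |S_{N_k}|^2\,\mathrm d\mu$ summed against a convergent weight --- this step needs a little care, so let me instead use the standard trick below.

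The cleaner route: by the hypothesis and the monotone convergence theorem, $\int_A \bigl(\sum_{N\ge 1}\frac1N |S_N(\alpha)|^2\bigr)\,\mathrm d\mu < \infty$, so for $\mu$-almost every $\alpha$ the series $\sum_{N\ge 1}\frac1N |S_N(\alpha)|^2$ converges; in particular $\frac1N|S_N(\alpha)|^2 \to 0$, which is far too weak. To upgrade this I would instead apply the monotone convergence theorem to the subsequence sum $\sum_{k\ge 1} |S_{k^2}(\alpha)|^2$: since $\int_A |S_{k^2}|^2\,\mathrm d\mu \le k^2 \cdot \frac{1}{k^2}\int_A|S_{k^2}|^2\,\mathrm d\mu$ and $\sum_k \frac{1}{k^2}\int_A|S_{k^2}|^2\,\mathrm d\mu$ converges by hypothesis, we get... no --- the correct bound is simply that the hypothesis series contains, among its terms with $N$ ranging over $\{k^2\}$, the subseries $\sum_k \frac{1}{k^2}\int_A|S_{k^2}|^2\,\mathrm d\mu$, and I want $\sum_k \int_A|S_{k^2}|^2\,\mathrm d\mu$. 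These differ by the weight $k^2$. The resolution is to note the hypothesis is actually strong enough only to give the \emph{weighted} subseries convergence, hence Borel--Cantelli along $\{k^2\}$ yields $S_{k^2}(\alpha)/k \to 0$, again too weak. So the honest plan is the textbook one (see Kuipers--Niederreiter, Chapter~1): use the hypothesis directly to deduce, for $\mu$-a.e.\ $\alpha$, convergence of $\sum_N \frac1N |S_N(\alpha)|^2$; then by Cauchy's condensation / a summation-by-parts argument applied to the partial sums $\sum_{n\le N} X_n$, boundedness of $X_n$ lets one interpolate between $N = k^2$ and $N=(k+1)^2$ to conclude $S_N(\alpha)\to 0$ for the full sequence.

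Concretely, the key steps in order are: (1) invoke monotone convergence to move the sum inside the integral and conclude $\sum_{N\ge1}\frac1N|S_N(\alpha)|^2 < \infty$ for $\mu$-a.e.\ $\alpha$; (2) fix such an $\alpha$ and set $T_N = \sum_{n\le N}X_n(\alpha)$, so $|S_N| = |T_N|/N$; from $\sum_N \frac{1}{N^3}|T_N|^2<\infty$ deduce along the subsequence $N_k=k^2$ that $\sum_k \frac{1}{k^6}|T_{k^2}|^2 \cdot k \approx \sum_k \frac{|T_{k^2}|^2}{k^5}<\infty$ --- wait, the spacing $N_{k+1}-N_k \approx 2k$ means $\sum_k \frac1{N_k}\bigl(\text{const}\bigr)$ picks up; (3) hence $|T_{k^2}|/k^2 \to 0$, i.e.\ $S_{k^2}(\alpha)\to 0$; (4) for general $N$ with $k^2 \le N < (k+1)^2$, write $|T_N - T_{k^2}| \le \sum_{k^2 < n \le (k+1)^2}|X_n(\alpha)| \le M\cdot 2k$ where $M = \sup_{n}\|X_n\|_\infty < \infty$ by boundedness, so $|S_N(\alpha)| \le \frac{|T_{k^2}| + 2Mk}{k^2} \to 0$; (5) since this holds for $\mu$-a.e.\ $\alpha$, the lemma follows. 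The main obstacle --- and the step I would be most careful about --- is step (2)--(3): extracting \emph{actual} convergence (not just $o(N)$) of $S_{N_k}$ from the hypothesis, which forces the subsequence to be chosen so that $\sum_k 1/N_k$ converges \emph{and} the gaps $N_{k+1}-N_k = o(N_k)$ simultaneously; $N_k = k^2$ achieves both ($\sum 1/k^2<\infty$, gaps $\sim 2k = o(k^2)$), and then one needs the elementary inequality $\sum_k \int_A |S_{k^2}|^2\,\mathrm d\mu \le \sum_k \sum_{N_{k-1}<N\le N_k}\frac1N\int_A|S_N|^2\,\mathrm d\mu \cdot (\text{something})$ --- more precisely, one rewrites the hypothesis series by grouping $N\in (N_{k-1},N_k]$ and uses that within a block $\frac1N \asymp \frac1{k^2}$ together with a maximal-inequality-free comparison; this bookkeeping is routine but is where all the care lives.
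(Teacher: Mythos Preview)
The paper does not prove this lemma; it simply cites \cite[Lemma~1.8]{Bugeaud} (the Davenport--Erd\H{o}s--LeVeque criterion). Your overall strategy --- pass to a sparse subsequence, prove convergence there, then interpolate using boundedness of the $X_n$ --- is exactly the standard one, and the interpolation step~(4) is correct.

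The gap is in your steps~(2)--(3), and you effectively admit it. From $\sum_N \frac{1}{N}|S_N(\alpha)|^2 < \infty$ alone you \emph{cannot} conclude $S_{k^2}(\alpha)\to 0$: take $|S_N(\alpha)|=1$ whenever $N$ is a power of $2$ and $|S_N(\alpha)|=0$ otherwise; then $\sum_N \frac{1}{N}|S_N|^2 = \sum_j 2^{-j}<\infty$ yet $S_{k^2}\not\to 0$ along the subsequence $k=2^j$. Your attempt to get $\sum_k \int_A|S_{k^2}|^2\,\mathrm d\mu<\infty$ by grouping the hypothesis series into blocks $((k-1)^2,k^2]$ also fails: each block has about $2k$ terms of weight $\asymp 1/k^2$, so the comparison yields at best $\sum_k \frac{1}{k}\int_A|S_{k^2}|^2\,\mathrm d\mu<\infty$, which is the same obstruction one level down. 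The missing idea is that the subsequence must be chosen \emph{adaptively}: writing $\rho(M)=\sum_{N>M}\frac1N|S_N(\alpha)|^2\to 0$, one picks $N_{j+1}/N_j\to 1$ slowly enough that $\rho(N_j)/(1-N_j/N_{j+1})\to 0$, and then in each block $(N_j,N_{j+1}]$ the minimum of $|S_N|^2$ is bounded by this ratio. (Alternatively, one can rescue the fixed subsequence $k^2$ by using boundedness a \emph{second} time to show $|S_{(k+1)^2}-S_{k^2}|=O(1/k)$, and then argue that a nonnegative sequence $a_k$ with $|a_{k+1}-a_k|=O(1/k)$ and $\sum a_k^2/k<\infty$ must tend to $0$; but this extra slow-variation argument is precisely what your write-up is missing.)
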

\begin{proof}
See \cite[Lemma~1.8]{Bugeaud}.
\end{proof}

 Let $X(N)$ and $Y(N)>0$ be quantities depending on $N$. We say that $X(N) \ll Y(N)$ if there exists a constant $C>0$ such that $|X(N)|\leq C\cdot Y(N)$. If this constant $C$ depends on some parameters $a_1,\ldots, a_n$, then we write $X(N) \ll_{a_1,\ldots,a_n} Y(N)$ for emphasizing the dependence.

\begin{lemma}\label{Lemma-uniformly}
Let $0<s<t<1$. The sequence $( (\alpha P_{2n}^4/4, \alpha P_{2n}^2))_{n\in \mathbb{N}}$ is uniformly distributed modulo $1$ for almost all $\alpha\in (s,t)$.
\end{lemma}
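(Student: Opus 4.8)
The strategy is to apply Weyl's criterion on $[0,1)^2$ together with Lemma~\ref{Lemma-almostall}. Set $\mathbf{x}_n = (\alpha P_{2n}^4/4, \alpha P_{2n}^2)$ and, for a nonzero $\mathbf{h} = (h_1,h_2) \in \mathbb{Z}^2$, consider the exponential sum
\[
S_N(\alpha) = \frac{1}{N} \sum_{n=1}^N e\bigl( h_1 \alpha P_{2n}^4/4 + h_2 \alpha P_{2n}^2 \bigr).
\]
By Lemma~\ref{Lemma-almostall} applied with $A = (s,t)$, $\mu$ the Lebesgue measure, and $X_n = e(\langle \mathbf{h}, \mathbf{x}_n \rangle)$, it suffices to show that for each fixed nonzero $\mathbf{h}$ the series $\sum_{N\geq 1} N^{-1} \int_s^t |S_N(\alpha)|^2 \, d\alpha$ converges; indeed, this gives $S_N(\alpha) \to 0$ for almost all $\alpha \in (s,t)$ for each $\mathbf{h}$, and intersecting these full-measure sets over the countably many $\mathbf{h} \in \mathbb{Z}^2 \setminus \{(0,0)\}$ yields uniform distribution for almost all $\alpha$.

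To estimate $\int_s^t |S_N(\alpha)|^2 \, d\alpha$, expand the square:
\[
\int_s^t |S_N(\alpha)|^2 \, d\alpha = \frac{1}{N^2} \sum_{m,n=1}^N \int_s^t e\Bigl( \alpha \bigl( h_1 (P_{2m}^4 - P_{2n}^4)/4 + h_2(P_{2m}^2 - P_{2n}^2) \bigr) \Bigr) \, d\alpha.
\]
The diagonal terms $m = n$ contribute $O(1/N)$. For the off-diagonal terms, the integral $\int_s^t e(\alpha \lambda) \, d\alpha$ is $O(1/|\lambda|)$ where $\lambda = \lambda(m,n) = h_1(P_{2m}^4 - P_{2n}^4)/4 + h_2(P_{2m}^2 - P_{2n}^2)$. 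The key point is that for $m \neq n$ this integer $\lambda$ is nonzero and in fact grows rapidly: since the dominant term is $h_1 P_{\max(2m,2n)}^4/4$ and $P_k$ grows like $\phi_P^k/(2\sqrt{2})$ by Lemma~\ref{Lemma-Pell}\eqref{Property-Pell3}, we get $|\lambda(m,n)| \gg_{\mathbf{h}} \phi_P^{4\max(m,n)}$ once $\max(m,n)$ is large enough (depending on $\mathbf{h}$). Summing the off-diagonal contribution, $\sum_{m \neq n} |\lambda(m,n)|^{-1} \ll_{\mathbf{h}} \sum_{n} n \cdot \phi_P^{-4n} \ll_{\mathbf{h}} 1$, so $\int_s^t |S_N(\alpha)|^2 \, d\alpha \ll_{\mathbf{h}} 1/N^2 + 1/N \ll 1/N$. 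Hence $\sum_N N^{-1} \cdot (1/N) < \infty$, as required.

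The main obstacle is verifying that $\lambda(m,n) \neq 0$ for all $m \neq n$ (not merely for large $\max(m,n)$), since the growth estimate only handles large indices and one needs the hypothesis of Lemma~\ref{Lemma-almostall} to hold for the full series. For $\max(m,n)$ large this is immediate from the growth rate of $P_k^4$; for the finitely many remaining pairs one checks directly that $P_{2m}^4 \neq P_{2n}^4$ and more generally that the polynomial combination cannot vanish — if $h_1 \neq 0$ this follows from strict monotonicity of $(P_{2k})_k$, and if $h_1 = 0$ then $\lambda = h_2(P_{2m}^2 - P_{2n}^2) \neq 0$ since $h_2 \neq 0$ and $P_{2m} \neq P_{2n}$. (Alternatively, one may simply absorb the finitely many possibly-vanishing terms into the diagonal, bounding their integrals by $t - s$ each, which contributes $O(1/N^2)$ to $\int_s^t|S_N|^2\,d\alpha$ and does not affect convergence of the series.) Once this is settled, the rest is routine.
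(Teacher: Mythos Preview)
Your proposal is correct and follows essentially the same route as the paper: apply Weyl's criterion via Lemma~\ref{Lemma-almostall}, expand $|S_N(\alpha)|^2$, separate the diagonal contribution $O(1/N)$, and control the off-diagonal terms using the exponential growth of $P_{2n}$ from Lemma~\ref{Lemma-Pell}\eqref{Property-Pell3}. The paper makes the intermediate bound $|L(2n,2m)|\gg_{\mathbf h}|P_{2n}^2-P_{2m}^2|\geq P_{2n-1}^2$ explicit and, exactly as in your ``alternative'' remark, absorbs the finitely many small-index pairs (those with $\min(m,n)<N_0$) into the $O(1/N)$ term rather than arguing separately that $\lambda(m,n)\neq 0$ for all $m\neq n$.
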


\begin{proof}
Fix any $(h_1,h_2)\in \mathbb{Z}^2 \setminus \{(0,0) \}$. Then for all $n,m\in \mathbb{N}$, we define 
\[
L(n,m) = (h_1/4) (P_n^4 -P_m^4)  + h_2 (P_n^2 -P_m^2).  
\]
When $h_1=0$, then $h_2 \neq 0$ by the choice of $(h_1,h_2)$, and hence
\[
| L(n,m) | =  |h_2| |P_n^2 -P_m^2|\geq  |P_n^2 -P_m^2|. 
\]
When $h_1 \neq 0$, then there exist $C_0=C_0(h_1,h_2)>0$ and $N_0=N_0(h_1,h_2)>0$ such that for all $n,m\geq N_0$, we obtain  
\begin{align}\label{estimation of L(n,m)}
|L(n,m)|
&\geq   |P_n^2 -P_m^2| (P_n^2+P_m^2) \left(|h_1|/4  -  |h_2|/(P_n^2+P_m^2) \right) \notag \\
&\geq  C_0 |P_n^2 -P_m^2 |.
\end{align}
Therefore, for sufficiently large $N\geq 1$, the following holds:
\begin{align}\nonumber
& \int_s^t  \left|\frac{1}{N} \sum_{n=1}^N  e( \alpha (h_1 P_{2n}^4/4+ h_2 P_{2n}^2) ) \right|^2\mathrm{d} \alpha \\ \nonumber
&= \frac{t-s}{N} + \frac{1}{N^2} \sum_{\substack {n,m\leq N \\ n\neq m}  } \int_{s}^t  e ( \alpha L(2n,2m) )\ \mathrm{d} \alpha\\ \label{Inequality-IntegrationE}
& \ll \frac{1}{N} + \frac{1}{N^2} \sum_{\substack {N_0\leq n,m\leq N \\ n\neq m}  } \frac{1}{|L(2n,2m)|}, 
\end{align}
where we apply the following to obtain the last inequality: 
\[
\int_{s}^t  e ( \alpha L(2n,2m) )\ \mathrm{d} \alpha=\frac{e(tL(2n,2m))-e(tL(2n,2m))}{2\pi L(2n,2m)\sqrt{-1}}\ll \frac{1}{|L(2n,2m)|}.
\]
Therefore, combining \eqref{estimation of L(n,m)} and \eqref{Inequality-IntegrationE},
\[
\int_s^t  \left|\frac{1}{N} \sum_{n=1}^N  e( \alpha (h_1 P_{2n}^4/4+ h_2 P_{2n}^2) ) \right|^2\mathrm{d} \alpha \ll \frac{1}{N} + \frac{1}{N^2} \sum_{\substack {N_0\leq m<n\leq N }  } \frac{1}{P_{2n}^2-P_{2m}^2}.
\]

Since $P_{2n}^2-P_{2m}^2 \geq P_{2n}^2 -P_{2n-2}^2 = (2P_{2n-1}+P_{2n-2})^2-P_{2n-2}^2 \geq  P_{2n-1}^2$, we obtain
\begin{align*}
\sum_{\substack {N_0\leq m<n\leq N }  } \frac{1}{P_{2n}^2-P_{2m}^2}
\ll N\sum_{\substack {N_0\leq n\leq N } } \frac{1}{P_{2n-1}^2}
\ll N\sum_{\substack {N_0\leq n }  } \phi_P^{-4n} \ll N,
\end{align*}
where $P_{2n-1}^2 \gg \phi_P^{4n}$ follows from \eqref{Property-Pell3} in Lemma~\ref{Lemma-Pell}. Therefore, the series
\[
\sum_{N\geq 1}\frac{1}{N}\int_s^t  \left|\frac{1}{N} \sum_{n=1}^N  e( \alpha (h_1 P_{2n}^4/4+ h_2 P_{2n}^2) ) \right|^2\mathrm{d} \alpha
\]
converges. By Lemma~\ref{Lemma-almostall} and Weyl's criterion, we conclude that
\[
( (\alpha P_{2n}^4/4, \alpha P_{2n}^2))_{n\in \mathbb{N}}
\]
is uniformly distributed modulo $1$ for almost all $\alpha\in (s,t)$.
\end{proof}

\begin{proof}[Proof of Theorem~\ref{Theorem-lowerT-almostall}]
Let $0<s<t<1$. Lemma~\ref{Lemma-uniformly} establishes that \\
$((\alpha P_{2n}^4/4 ,\alpha P_{2n}^2 ))_{n\in \mathbb{N}}$ is uniformly distributed modulo $1$ on $[0,1)^2$ for almost all $\alpha\in (s,t)$. Take such $\alpha\in (s,t)$. Define 
\[
A=\{n\in \mathbb{N} \colon (\{\alpha P_{2n}^4/4 \},\{\alpha P_{2n}^2\} )\in [a,(1-t)/2]\times [0,(1-t)/2] \}, 
\]
where $a:=\max(0, (1-t)/2-s)$. Fix any $n\in A$. Set $x_n=P_{2n+1}-P_{2n}$ and $y_n=P_{2n}$. Then Lemma~\ref{Lemma-PellEquation} yields 
\[
 \alpha x_n^2= \alpha (2y_n^2 +1) = 2 \lfloor \alpha y_n^2 \rfloor +2\{ \alpha y_n^2\} +\alpha. 
\]
Since $0\leq 2\{ \alpha y_n^2\} +\alpha < (1-t) +t=1$, we have 
\begin{equation}\label{Equation-Theorem1.4-xy}
\lfloor \alpha x_n^2 \rfloor =2\lfloor \alpha y_n^2 \rfloor.  
\end{equation}
Further, for every $j\in \{-1,0,1\}$, we have
\[
 \alpha(y_n^2/2+j)^2 = \alpha y_n^4/4 +j\alpha y_n^2 +\alpha j^2
 = \lfloor \alpha y_n^4/4 \rfloor + j \lfloor  \alpha y_n^2 \rfloor + \delta (j),  
\]
where $\delta(j)=\{  \alpha y_n^4/4\} + j\{ \alpha y_n^2\}+\alpha j^2$.
By the choice of $n$, we obtain 
\[
0\leq a- (1-t)/2+ s < \delta(j) <  (1-t)/2+(1-t)/2+t =1
\]
for every $j\in \{-1,0,1\}$. Therefore, each $n\in A$ satisfies 
\begin{equation}\label{Equation-Theorem1.4-AP}
\lfloor \alpha(y_n^2/2+j)^2 \rfloor=\lfloor \alpha y_n^4/4 \rfloor + j \lfloor  \alpha y_n^2 \rfloor.
\end{equation}
Combining \eqref{Equation-Theorem1.4-xy} and \eqref{Equation-Theorem1.4-AP}, each $n\in A$ meets the condition that 
\[
(\floor{ \alpha y_n^2} , \floor{\alpha y_n^2}, \floor{\alpha (y_n^2/2-1)^2})\in T(\alpha).
\]
It follows that $y_n^2/2-1=P_{2n}^2/2 -1\leq \phi_P^{4n}/16$. Thus, we have
\[
\#T_{\leq x} (\alpha) \geq \# \left(A \cap \left[1,  \frac{\log (16x)} {4 \log \phi_P} \right] \right).
\]
By the choice of $\alpha$, $((\alpha P_{2n}^4/4 ,\alpha P_{2n}^2 ))_{n\in \mathbb{N}}$ is uniformly distributed modulo $1$. This yields 
\begin{align*}
&\liminf_{x\to \infty}\frac{4\log (\phi_P)} {\log (16x)}\cdot\#T_{\leq x} (\alpha) \geq \liminf_{x\to \infty}\ \frac{4\log (\phi_P)} {\log (16x)}\cdot \# \left(A \cap \left[1,  \frac{\log (16x)} {4 \log \phi_P} \right] \right)\\
&= \frac{1-t}{2} \left(\frac{1-t}{2}-a\right) = \frac{ (1-t) \cdot \min \left( 1-t,\ 2s \right)}{4}.
\end{align*}
Therefore, we deduce the conclusion of Theorem~\ref{Theorem-lowerT-almostall}.
\end{proof}

\section{Proof of Theorem~\ref{Theorem-Upperbounds}}\label{Section-Upperbounds}
The goal of this section is to discuss upper bounds for $\#T_{\leq x}(\alpha)$ and to provide a proof of Theorem~\ref{Theorem-Upperbounds}. Let $a_1$, $a_2$, and $a_3$ be integers. For every $x> 1$, we define $\mathcal{A}(x)=\mathcal{A}(x,a_1,a_2,a_3)$ as the set of tuples $(k,\ell, m)\in [1,x]_\mathbb{Z}$ such that 
\[
k^2+\ell^2 =y^2 +a_1,\quad k^2 +m^2=z^2 +a_2,\quad m^2 +\ell^2=w^2 +a_3  
\]
for some $y,z,w\in \mathbb{N}$. We observe that 
\begin{equation}\label{Inequality-TtoA}
\# T_{\leq x} (\alpha) \ll \sum_{|a_1|, |a_2|, |a_3|\leq 2/\alpha,  } \#\mathcal{A}(x,a_1,a_2,a_3) .
\end{equation}
Indeed, if $(\lfloor \alpha k^2 \rfloor ,\lfloor \alpha \ell^2\rfloor,\lfloor \alpha m^2\rfloor )\in T_{\leq x}(\alpha)$, then we have 
\[
\lfloor \alpha k^2\rfloor +\lfloor \alpha \ell^2\rfloor =\lfloor \alpha y^2 \rfloor,\quad \lfloor \alpha k^2\rfloor +\lfloor \alpha m^2\rfloor =\lfloor \alpha z^2 \rfloor,\quad 
\lfloor \alpha m^2\rfloor +\lfloor \alpha \ell^2\rfloor =\lfloor \alpha w^2 \rfloor
\]
for some $y,z, w\in \mathbb{N}$, which is equivalent to
\begin{align*}
\alpha(k^2+l^2-y^2)&=\{\alpha k^2\}+\{\alpha l^2\}-\{\alpha y^2\},\\
\alpha(k^2+m^2-z^2)&=\{\alpha k^2\}+\{\alpha m^2\}-\{\alpha z^2\},\\
\alpha(m^2+l^2-w^2)&=\{\alpha m^2\}+\{\alpha l^2\}-\{\alpha w^2\}.
\end{align*}
Therefore, 
\[
|k^2 +\ell^2 -y^2| \leq 2/\alpha, \quad |k^2 +m^2 -z^2| \leq 2/\alpha, \quad  |m^2 +\ell^2-w^2|\leq 2/\alpha .
\]
This implies that each tuple $(k,\ell , m)\in T_{\leq x}(\alpha)$ belongs to $\mathcal{A}(x,a_1,a_2,a_3)$ for some $|a_1|,|a_2|,|a_3|\leq 2/\alpha$. Thus, we deduce \eqref{Inequality-TtoA}.

From \eqref{Inequality-TtoA}, it suffices to show that for every $x\geq 2$
\begin{equation}\label{Inequality-Atoxlogx}
\#\mathcal{A}(x,a_1,a_2,a_3) \ll_{a_1, a_2,a_3}   x(\log x)^{15}.
\end{equation}
We can apply the divisor function to evaluate upper bounds for $\#\mathcal{A}$. For all $n\in \mathbb{Z}\setminus \{0\}$, we define $\tau (n)$ as the number of negative or positive divisors of $n$, and $\tau_+(n)$ as the number of positive divisors of $n$. It is clear that $\tau(n)=2\tau_+(n)$ since for every divisor $q$ of $n$, both $q$ and $-q$ are negative or positive divisors of $n$. By the fundamental theorem of arithmetic,  for all $u,v\in \mathbb{N}$, 
\begin{equation} \label{Inequality-divisor}
\tau_+(uv) \leq \tau_+(u) \tau_+(v). 
\end{equation}

By the definition of $\mathcal{A}$, for every fixed $k\in [1,x]_\mathbb{Z}$, if $(k,\ell,m)\in \mathcal{A}(x,a_1,a_2,a_3)$, then there exist $y,z\in \mathbb{N}$ such that 
\begin{equation}\label{Equation-YLZM}
(y-\ell)(y+\ell) = k^2-a_1 ,\quad (z-m)(z+m)=k^2-a_2.
\end{equation}
Therefore, if $k^2-a_1\neq 0$ and $k^2-a_2\neq 0$, then the number of tuples $(y,\ell, z,m)$ satisfying \eqref{Equation-YLZM} is less than or equal to
\[
\tau (k^2-a_1) \tau (k^2-a_2).
\]
Thus, roughly speaking, we obtain 
\begin{equation}\label{Inequality-AtoTau}
\#\mathcal{A}(x,a_1,a_2,a_3)\ll \sum_{\substack{k\leq x\\ k^2\notin \{a_1,a_2\}}} \tau (k^2-a_1) \tau (k^2-a_2).
\end{equation}
Note that we have to care about the case $k^2\in \{a_1,a_2\}$ as well. We will do precise discussion at the end of this section.   
To evaluate the right-hand side in \eqref{Inequality-AtoTau}, the following lemma is useful.
%\begin{proof}
%Let $u=p_1^{e_1}\cdots p_r^{e_r}$ and $v=p_1^{f_1}\cdots p_r^{f_r} q_1^{h_1}\cdots q_s^{h_s}$, where $p_1,\ldots, p_r, q_1,\ldots, q_s$ are rational prime numbers and $e_1,\ldots ,e_r, f_1,\ldots , f_r, h_1,\ldots ,h_s\geq 0$. Then we have
%\begin{gather*}
%d(uv)= (1+e_1+f_1)\cdots (1+e_r+f_r)  (1+h_1)\cdots (1+h_s),\\
%d(u)= (1+e_1)\cdots (1+e_r),\quad d(v)=(1+f_1)\cdots (1+f_r) (1+h_1) \cdots (1+h_s). 
%\end{gather*}
%Since $(1+e_i+f_i)\leq (1+e_i)(1+f_i)$ for all $1\leq i\leq r$, we obtain $d(uv)\leq d(u)d(v)$. 
%The lemma follows from the  
%\end{proof}

\begin{lemma}\label{Lemma-MeanIrreducible} Let $s\in \mathbb{N}$, and let $P\in \mathbb{Z}[x]$ be an irreducible polynomial of degree $\ell\in\mathbb{N}$. Assume that $P(x)>0$ for all $x\in \mathbb{N}$. Then for all $x\geq 2$, 
\begin{equation}\label{Inequality-MeanIrreducible}
 x(\log x)^{ 2^{s} -1 }\ll_{P,s} \sum_{m\leq x} \tau_+^s(P(m)) \ll_{P,s} x(\log x)^{ 2^{s} -1 }.
\end{equation}
\end{lemma}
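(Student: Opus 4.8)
The plan is to prove Lemma~\ref{Lemma-MeanIrreducible} by reducing the mean value of $\tau_+^s(P(m))$ to a counting problem over divisor tuples and then invoking a classical estimate on the number of solutions of $P(m)\equiv 0 \pmod q$. First I would record the standard identity $\tau_+^s(n)=\#\{(d_1,\dots,d_s)\in\N^s\colon d_i\mid n\text{ for all }i\}$, and note that since $d_i\mid P(m)$ for every $i$ is equivalent to $\mathrm{lcm}(d_1,\dots,d_s)\mid P(m)$, summing over $m\le x$ gives
\[
\sum_{m\le x}\tau_+^s(P(m))=\sum_{d_1,\dots,d_s}\#\{m\le x\colon \mathrm{lcm}(d_1,\dots,d_s)\mid P(m)\}.
\]
For a fixed modulus $q=\mathrm{lcm}(d_1,\dots,d_s)$, write $\rho_P(q)=\#\{m\bmod q\colon P(m)\equiv 0\pmod q\}$; then $\#\{m\le x\colon q\mid P(m)\}=(x/q)\rho_P(q)+O(\rho_P(q))$, and of course the term is zero once $q$ exceeds $\max_{m\le x}P(m)\ll_P x^{\ell}$, so only $q\ll x^{\ell}$ contribute. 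The function $\rho_P$ is multiplicative, and because $P$ is irreducible over $\Z$, $\rho_P(p)\le\ell$ for all primes $p$ not dividing the discriminant of $P$, with a bounded contribution from the finitely many bad primes and from prime powers; in particular $\rho_P(q)\ll_{P,\epsilon} q^{\epsilon}$ and, more precisely, $\sum_{q\le y}\rho_P(q)/q\ll_P (\log y)^{\ell}$ by a Mertens-type argument (the Dirichlet series $\sum \rho_P(q)q^{-s}$ behaves like $\zeta(s)^{\ell}$ up to an Euler product convergent for $\Re s>1/2$).

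The upper bound then follows by writing, with $q=\mathrm{lcm}(d_1,\dots,d_s)$,
\[
\sum_{m\le x}\tau_+^s(P(m))\ll_P x\sum_{d_1,\dots,d_s\colon \mathrm{lcm}\ll x^{\ell}}\frac{\rho_P(\mathrm{lcm}(d_1,\dots,d_s))}{\mathrm{lcm}(d_1,\dots,d_s)} + (\text{error terms}),
\]
and the main task is to show this multiple sum is $\ll_{P,s}(\log x)^{2^s-1}$. The cleanest route is to collapse the $s$-fold sum over $d_1,\dots,d_s$ with fixed lcm $q$ into $\sum_{q\ll x^{\ell}} g_s(q)\rho_P(q)/q$, where $g_s(q)=\#\{(d_1,\dots,d_s)\colon \mathrm{lcm}(d_i)=q\}$ is multiplicative with $g_s(p^a)=(a+1)^s-a^s$ on prime powers; since $\sum_{a\ge 0}g_s(p^a)p^{-a}=(1-1/p)^{-(2^s-1)}(1+O(p^{-2}))$ (the leading exponent $2^s-1$ coming from $\sum_a((a+1)^s-a^s)t^a$ having a pole of order $2^s$ at... more carefully, order $2^s-1$ after one factor of $(1-t)$ is absorbed), combining with $\rho_P(p)\le \ell$ and a Rankin/Mertens estimate gives the bound $(\log x)^{2^s-1}$ up to the constant. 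The error terms — those $O(\rho_P(q))$ remainders summed over all admissible $(d_1,\dots,d_s)$ — are crudely $\ll \sum_{q\ll x^{\ell}}g_s(q)\rho_P(q)\ll_{P,s,\epsilon}x^{\ell+\epsilon}$, which is far too large, so I would instead truncate more carefully: split according to whether $q\le x$ or $x<q\ll x^{\ell}$, using the trivial bound $\#\{m\le x\colon q\mid P(m)\}\le \rho_P(q)\cdot(x/q+1)$ throughout and noting that for $q>x$ there are $O(1)$ values of $m\le x$ per residue class but typically none; one shows $\sum_{x<q\ll x^\ell}g_s(q)\rho_P(q)\#\{m\le x\colon q\mid P(m)\}$ is also $\ll x(\log x)^{2^s-1}$ by reorganizing the double sum as $\sum_{m\le x}\sum_{q\mid P(m),\,q>x}g_s(q)\rho_P(q)$ and bounding the inner sum by the full divisor-type sum $\tau_+^s(P(m))$ times a convergent tail — or, most simply, by observing the whole manipulation is just the identity $\sum_{m\le x}\tau_+^s(P(m))=\sum_q g_s(q)\#\{m\le x\colon q\mid P(m)\}$ with \emph{no} error term at all, and then bounding $\#\{m\le x\colon q\mid P(m)\}\le \rho_P(q)(1+x/q)$ and splitting the resulting sum at $q=x$.

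For the lower bound I would retain only the "diagonal-type" contribution: restrict to squarefree $q\le x^{1/2}$ (say) coprime to the discriminant, for which $\#\{m\le x\colon q\mid P(m)\}\ge (x/q)\rho_P(q)-\rho_P(q)\gg (x/q)\rho_P(q)$ since $q\le x^{1/2}$ forces $x/q\ge x^{1/2}$ large; then
\[
\sum_{m\le x}\tau_+^s(P(m))\ge \sum_{\substack{q\le x^{1/2}\\ q\text{ squarefree},\,(q,\mathrm{disc})=1}}g_s(q)\,\#\{m\le x\colon q\mid P(m)\}\gg x\sum_{\substack{q\le x^{1/2}\\ q\text{ sqfree},\,(q,\mathrm{disc})=1}}\frac{g_s(q)\rho_P(q)}{q},
\]
and the last sum is $\gg_{P,s}(\log x)^{2^s-1}$ by the same Euler-product/Mertens computation restricted to squarefree $q$ (on squarefree $q$, $g_s(q)=\tau_+(q)^s=(2^s)^{\omega(q)}$... wait — on a squarefree $q=p_1\cdots p_k$ one has $g_s(q)=\prod(2^s-1)=(2^s-1)^{\omega(q)}$ since on a prime $g_s(p)=2^s-1$, and $\rho_P(p)\in\{0,1,\dots,\ell\}$ with $\rho_P(p)\ge 1$ for a positive-density set of primes by Chebotarev), so $\sum_{q\text{ sqfree}}g_s(q)\rho_P(q)q^{-1}$ matches $\prod_p(1+(2^s-1)\rho_P(p)/p)$, whose logarithm is $\sim(2^s-1)\bar\rho\log\log x$ — \textbf{this is the main obstacle}: the naive Euler product over squarefree $q$ gives an exponent involving the \emph{average} of $\rho_P(p)$, which need not equal $\ell$, so to recover the sharp exponent $2^s-1$ I must instead \emph{not} restrict to squarefree $q$ but keep the full range of prime powers, where $\sum_{a}g_s(p^a)\rho_P(p^a)p^{-a}$ does have a pole of the correct order $2^s-1$ at the relevant primes because $g_s(p^a)$ grows polynomially in $a$ while $\rho_P(p^a)$ stays bounded; concretely the local factor at a prime with $\rho_P(p)=1$ is $1+\sum_{a\ge1}((a+1)^s-a^s)p^{-a}=(1-1/p)^{-(2^s-1)}(1+O(1/p^2))$, and since $\rho_P(p)\ge1$ for all but finitely many primes in a positive-density set — in fact for infinitely many, and one can choose to sum only over $q$ supported on primes $p$ with $\rho_P(p)\ge 1$, which by Chebotarev has the property that $\sum_{p\le y,\,\rho_P(p)\ge1}1/p\sim\delta\log\log y$ for some $\delta>0$, giving exponent $\delta(2^s-1)$ not $2^s-1$. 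So the genuinely delicate point, and where I would invest the most care, is getting the \emph{exact} power of $\log x$: the upper bound power $2^s-1$ is easy (overcount by dropping $\rho_P\le\ell$ down to treating it as if supported everywhere with value $O(1)$ is actually an \emph{under}count of the exponent, so upper bound needs $\rho_P(p)\le\ell$ giving local factor $\le(1-1/p)^{-\ell(2^s-1)}$ — hmm, that gives exponent $\ell(2^s-1)$, too big). I would resolve this by using the correct multiplicative function: $\sum_{m\le x}\tau_+^s(P(m))$ should be compared, via the identity $\tau_+^s(n)=\sum_{q\mid n}g_s(q)$ wait that's false — rather $\tau_+^s(n)=\#\{(d_1,\dots,d_s)\colon d_i\mid n\}$ and the generating Dirichlet series is $\sum_n\tau_+^s(n)n^{-z}=\zeta(z)^{?}$; actually $\sum\tau_+(n)^s n^{-z}$ has a pole of order $2^s$ at $z=1$ (since on primes $\tau_+(p)^s=2^s$), hence $\sum_{n\le x}\tau_+(n)^s\asymp x(\log x)^{2^s-1}$, and the composition with the irreducible polynomial $P$ multiplies the count of each $m$-value but — by Landau/Wirsing-type results on $\sum_{m\le x}f(P(m))$ for multiplicative $f$ — preserves the order $(\log x)^{2^s-1}$ precisely because $\sum_{p\le y}(\tau_+(p)^s-1)\rho_P(p)/p=\sum_{p\le y}(2^s-1)\rho_P(p)/p$ and the relevant theorem (e.g. Nair–Tenenbaum, "Short sums of certain arithmetic functions", or Shiu's theorem) states that $\sum_{m\le x}f(P(m))\asymp_{P,f} x\prod_{p\le x}\bigl(1+(f(p)-1)\rho_P(p)/p+\cdots\bigr)$, and for $f=\tau_+^s$ this product is $\asymp(\log x)^{2^s-1}$ since $\sum_p\rho_P(p)/p\sim\log\log x$ by the prime ideal theorem (the total density of primes $p$ with $\rho_P(p)\ge 1$, weighted by $\rho_P(p)$, is exactly $1$ on average: $\sum_{p\le y}\rho_P(p)/p=\log\log y+O(1)$ because $\rho_P(p)$ equals the number of degree-one primes above $p$ in $\Z[x]/(P)$ and these have Dirichlet density summing to $1$). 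That — invoking a Shiu-type or Nair–Tenenbaum-type theorem on mean values of nonnegative multiplicative functions along polynomial sequences, together with $\sum_{p\le y}\rho_P(p)/p=\log\log y+O_P(1)$ — is the clean way to get both bounds with the sharp exponent $2^s-1$ simultaneously, and I expect the write-up to cite such a theorem rather than reprove it.
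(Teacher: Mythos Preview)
The paper does not prove this lemma at all: its entire proof is ``See \cite[Theorem]{Delmer}.'' So your final sentence --- that you ``expect the write-up to cite such a theorem rather than reprove it'' --- is exactly what happens, and in that sense your proposal lands in the right place. The specific reference the paper uses is Delmer's 1971 result, which is precisely the statement of the lemma; Nair--Tenenbaum or Shiu would also suffice and are more modern, but Delmer gives the sharp two-sided asymptotic directly.

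That said, your exploratory middle section is worth tidying before you would trust it as an independent argument. You correctly identify the decisive arithmetic input, namely $\sum_{p\le y}\rho_P(p)/p=\log\log y+O_P(1)$ from the prime ideal theorem (equivalently, the average of $\rho_P(p)$ over primes is exactly $1$), and this is indeed what pins the exponent to $2^s-1$ rather than $\ell(2^s-1)$ or $\delta(2^s-1)$. But several of the intermediate manoeuvres are garbled: the claim that $\tau_+^s(n)=\sum_{q\mid n}g_s(q)$ with $g_s(p^a)=(a+1)^s-a^s$ is actually correct (it is just telescoping), so your ``wait that's false'' is a false alarm; conversely, the squarefree-only lower bound really does fail to give the right exponent, as you noticed, and the fix is not to keep higher prime powers (those contribute negligibly) but to use the full Landau--Selberg--Delange or Wirsing machinery, which is exactly what the cited theorems package. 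If you wanted a self-contained write-up, the cleanest route is: upper bound via Shiu's theorem applied to $f=\tau_+^s$, lower bound via a Selberg--Delange analysis of $\sum_q g_s(q)\rho_P(q)q^{-s}$ near $s=1$ (pole of order $2^s-1$, using the mean-one property of $\rho_P$) combined with a truncation at $q\le x$.
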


\begin{proof}
See \cite[Theorem]{Delmer}.
\end{proof}

\begin{lemma}\label{Lemma-Quad} Let $s\in\mathbb{N}$, and let $P\in \mathbb{Z}[x]$ be a polynomial of degree $2$. Assume that there exists $x_0=x_0(P)>0$ such that $P(x)>0$ for all $x\geq x_0$. Then for all $x\geq 2$, 
\[
\sum_{1\leq m\leq x,\: P(m)\neq 0} \tau^s(P(m)) \ll_{P,s} x(\log x)^{ 4^s-1 }.
\]
\end{lemma}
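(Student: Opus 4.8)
The plan is to split according to whether $P$ is irreducible over $\mathbb{Q}$, and in the reducible case to decouple the two linear factors by the Cauchy--Schwarz inequality; this decoupling is precisely what converts the exponent $2^{2s}-1$ coming from Lemma~\ref{Lemma-MeanIrreducible} into the claimed $4^s-1$.

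First I would make some harmless reductions. Writing $P(x)=ax^2+bx+c$ with $a\neq 0$, the hypothesis that $P(x)>0$ for $x\geq x_0$ forces $a>0$. Since $x_0=x_0(P)$ is fixed, the terms with $1\leq m<x_0$ contribute $O_P(1)$, so it suffices to bound $\sum_{x_0\leq m\leq x,\ P(m)\neq 0}\tau^s(P(m))$; after the substitution $m\mapsto m+\lceil x_0\rceil$ (which changes neither the degree nor the factorisation type of $P$) we may assume $P(m)>0$ for all $m\in\mathbb{N}$. Moreover $\tau(n)=2\tau_+(n)$ gives $\tau^s(P(m))=2^s\tau_+^s(P(m))$, so the factor $2^s$ is absorbed into the implied constant and it is enough to bound $\sum_{m\leq x}\tau_+^s(P(m))$.

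Case 1: $P$ is irreducible over $\mathbb{Q}$. Factor out the content, $P=d\cdot P_0$ with $d\in\mathbb{N}$ and $P_0\in\mathbb{Z}[x]$ primitive (still degree $2$, irreducible, positive on $\mathbb{N}$). Then \eqref{Inequality-divisor} gives $\tau_+^s(P(m))\leq \tau_+(d)^s\,\tau_+^s(P_0(m))$ with $\tau_+(d)^s=O_P(1)$, and Lemma~\ref{Lemma-MeanIrreducible} applied to $P_0$ with $\ell=2$ yields $\sum_{m\leq x}\tau_+^s(P_0(m))\ll_{P,s} x(\log x)^{2^s-1}$. Since $2^s-1\leq 4^s-1$, this case is done. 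Case 2: $P$ is reducible over $\mathbb{Q}$. By Gauss's lemma $P$ already factors into linear factors in $\mathbb{Z}[x]$, say $P(x)=(a_1x+b_1)(a_2x+b_2)$ with $a_i,b_i\in\mathbb{Z}$, $a_i\neq 0$ (we allow $(a_1,b_1)=(a_2,b_2)$, i.e.\ $P$ a square up to content). For $m$ with $P(m)\neq 0$ both $a_im+b_i$ are nonzero, and since the positive divisors of an integer and of its absolute value coincide, \eqref{Inequality-divisor} gives $\tau_+^s(P(m))\leq \tau_+^s(|a_1m+b_1|)\,\tau_+^s(|a_2m+b_2|)$. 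Summing over $m\leq x$ with $P(m)\neq 0$ and applying Cauchy--Schwarz,
\[
\sum_{\substack{m\leq x\\ P(m)\neq 0}}\tau_+^s(P(m))\leq \Bigl(\sum_{\substack{m\leq x\\ a_1m+b_1\neq 0}}\tau_+^{2s}(|a_1m+b_1|)\Bigr)^{1/2}\Bigl(\sum_{\substack{m\leq x\\ a_2m+b_2\neq 0}}\tau_+^{2s}(|a_2m+b_2|)\Bigr)^{1/2}.
\]
Each $L_i(x)=a_ix+b_i$ has degree $1$, hence is irreducible over $\mathbb{Q}$; replacing $L_i$ by $-L_i$ if $a_i<0$ and by $L_i(x+c_i)$ for a suitable $c_i\in\mathbb{Z}_{\geq 0}$ to make it positive on $\mathbb{N}$ (again changing the sum only by $O_P(1)$), Lemma~\ref{Lemma-MeanIrreducible} with $\ell=1$ and exponent $2s$ gives $\sum_{m\leq x}\tau_+^{2s}(|L_i(m)|)\ll_{P,s} x(\log x)^{2^{2s}-1}=x(\log x)^{4^s-1}$. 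Inserting this into the Cauchy--Schwarz bound yields $\sum_{m\leq x,\ P(m)\neq 0}\tau_+^s(P(m))\ll_{P,s} x(\log x)^{4^s-1}$, which with the reductions above proves the lemma.

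The only genuinely substantive step is the Cauchy--Schwarz decoupling in Case 2: one cannot invoke an ``$s$-th power of a degree-$2$ divisor sum'' estimate directly, since that is essentially the statement being proved, so one pays a factor of $2$ in the exponent ($s\mapsto 2s$) in order to separate the two linear factors, and $2^{2s}-1=4^s-1$ is exactly the exponent appearing in the claim. Everything else — reducing to $P$ positive on $\mathbb{N}$, discarding the $O_P(1)$ initial segment, extracting the content, handling the sign of $a_i$, and passing between $\tau$ and $\tau_+$ — is routine bookkeeping.
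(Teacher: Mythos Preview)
Your proof is correct and follows essentially the same approach as the paper: split into the irreducible and reducible cases, apply Lemma~\ref{Lemma-MeanIrreducible} directly in the first case, and in the second case decouple the two linear factors via \eqref{Inequality-divisor} and Cauchy--Schwarz before invoking Lemma~\ref{Lemma-MeanIrreducible} with exponent $2s$. The paper's write-up is terser (it does not spell out the content extraction, Gauss's lemma, or the sign/shift adjustments for the linear factors), but the argument is the same.
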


\begin{proof}
If $P\in \mathbb{Z}[x]$ is irreducible, then it is clear from Lemma~\ref{Lemma-MeanIrreducible} that  
\[
\sum_{m\leq x,\: P(m)\neq 0} \tau^s(P(m)) \ll_{P,s} 1+\sum_{x_0\leq m\leq x\: } \tau_+^s(P(m))\ll_{P,s} x(\log x)^{ 2^{s} -1 }.
\]
If $P$ is reducible, then let $P=P_1P_2$, where $P_1,P_2\in \mathbb{Z}[x]$ are of degree $1$. In this case, by \eqref{Inequality-divisor} and the Cauchy-Schwarz inequality, we obtain
\begin{align*}
\sum_{x_0\leq m\leq x} \tau_+^s(P(m))&\leq \sum_{x_0\leq m\leq x} \tau_+^s(P_1(m)) \tau_+^s(P_2(m))\\
&\leq \left(\sum_{x_0\leq m\leq x} \tau_+^{2s}(P_1(m))\right)^{1/2}
\left(\sum_{x_0\leq m\leq x} \tau_+^{2s}(P_2(m))\right)^{1/2}.
\end{align*}
Therefore, applying Lemma~\ref{Lemma-MeanIrreducible},
\begin{align*}
\sum_{x_0\leq m\leq x} \tau_+^s(P(m))&\ll_{P,s}  x (\log x)^{4^{s}-1}.
\end{align*}

\end{proof}

\begin{proof}[Proof of \eqref{Inequality-Atoxlogx}]
Let $x\geq 2$ be a real number. For all conditions $P$ in which $k,\ell, m$ appear as free variables, we define
$\mathcal{A}(P)=\mathcal{A}(P, x, a_1,a_2,a_3)$ as the set of all $(k,\ell, m)\in \mathcal{A}(x,a_1,a_2,a_3)$ satisfying that $P$ is true. 
Let $(k, \ell, m)\in \mathcal{A}$. Then there exist $y, z, w\in \mathbb{N}$ such that 
\begin{align}\label{Equation-divisorYL}
(y-\ell)(y+\ell) &= k^2-a_1,\\ \label{Equation-divisorZM}
(z-m)(z+m)&=k^2-a_2,\\ \label{Equation-divisorWL}
(w-\ell)(w+\ell)&=m^2-a_3.    
\end{align}

\noindent\underline{\textbf{Case~1:}} We now discuss the case when $k^2\in \{a_1,a_2 \}$ and $m^2=a_3$. In this case, it is clear from $1\leq \ell \leq x$ that 
\[
\#\mathcal{A} (k^2\in \{a_1,a_2 \}\: \&\: m^2=a_3)\ll x.
\]

\noindent\underline{\textbf{Case~2:}} We discuss the case when $k^2\in \{a_1,a_2 \}$ and $m^2\neq a_3$.  For every fixed $m\leq x$ with $m^2 \neq a_3$, the number of $(w,\ell)$ satisfying   \eqref{Equation-divisorWL} is less than or equal to $\tau(m^2-a_3)$. Therefore, Lemma~\ref{Lemma-Quad} yields 
\[
\#\mathcal{A} (k^2\in \{a_1,a_2 \}\: \&\: m^2\neq a_3)\leq 
 \sum_{m\leq x,\: m^2\neq a_3}  \tau \left(m^2 -a_3 \right)\ll_{a_3} x(\log x)^3.
\]

\noindent \underline{\textbf{Case~3:}} We discuss the case when $k^2\notin\{a_1,a_2\}$. By \eqref{Equation-divisorYL} and \eqref{Equation-divisorZM}, for every $(k,\ell ,m)\in \mathcal{A}(k^2\notin \{a_1,a_2\})$ there exist $y,z\in \mathbb{N}$ such that
\[
(y-\ell)(y+\ell) = k^2-a_1\neq 0 ,\quad (z-m)(z+m)=k^2-a_2 \neq 0.
\]
Therefore, it follows that
\begin{align*}
\# \mathcal{A}(k^2\notin \{a_1,a_2\})&\leq  \sum_{k\leq x,\: k^2\notin \{a_1, a_2\}}  \tau\left(k^2 -a_2 \right)\tau\left(k^2-a_1 \right).%&\ll_{a_1,a_2} x (\log x)^{\delta}
\end{align*}
The Cauchy-Schwarz inequality and Lemma~\ref{Lemma-Quad} imply that
\begin{align*}
\# \mathcal{A}(k^2\notin \{a_1,a_2\})&\leq  \left(\sum_{k\leq x} \tau^2(k^2-a_2) \right)^{1/2} \left(\sum_{k\leq x} \tau^2(k^2-a_1) \right)^{1/2}\\
&\ll x(\log x)^{15}. 
\end{align*}
We deduce \eqref{Inequality-Atoxlogx}, and hence we obtain Theorem~\ref{Theorem-Upperbounds}.
\end{proof}

\section{Homogeneous Equations and Proof of Theorem~\ref{Theorem-infinitePEBs}}\label{Section-Homogeneous}

In this section, we firstly present a generalization of the floor function and ceiling function. Let $\bbracket{\:\cdot\:}: \mathbb{R} \to \mathbb{Z}$ be an arbitrary function satisfying the following conditions: 
setting $\eta(x)= |x-\bbracket{x}|$,  
\begin{enumerate}
    \item[(C1)] for all $x\in \mathbb{R}$, $\eta(x)=\eta(x+1)$;
    \item[(C2)] for all $x\in \mathbb{Z}$, $\eta(x)=0$; 
    \item[(C3)] for all $x\in \mathbb{R}$ and $n\in \mathbb{N}$, $\eta(nx)\leq n\eta (x) $;
    \item[(C4)] for all $t>0$, $\eta^{-1}([0,t])\cap [0,1)$ is Jordan measurable and has positive measure.   
\end{enumerate}
We can give the floor function or the ceiling function as an example of $\bbracket{\:\cdot\:}$. Indeed, setting $\eta (x):=|x-\lfloor x\rfloor|= \{x\}$, $\eta$ satisfies (C1) to (C4). Similarly, $\eta(x):=|x-\lceil x \rceil|$ also satisfies (C1) to (C4).  In addition, for every $x\in \mathbb{R}$, let $\|x\|$ be the closest integer to $x$. We can also take $\|\cdot \|$ as $\bbracket{\:\cdot\:}$.

We can generalize the Diophantine equations on $T(\alpha)$. Let $f:\mathbb{R}^r \to \mathbb{R}$ be a homogeneous polynomial with coefficients in $\mathbb{Z}$. Let $d$ be the degree of $f$. We can write
\[
f (x_1,\ldots , x_r) = \sum_{ \substack{\nu_1,\ldots,\nu_r\geq 0\\  \nu_1+\cdots +\nu_r=d}   } a_{\nu_1,\ldots ,\nu_r} x_1^{\nu_1} \cdots x_r^{\nu_r}  
\]
for some $a_{\nu_1,\ldots ,\nu_r}\in \mathbb{Z}$. Then for all $x_1,\ldots, x_r \in \mathbb{R}^r$ and for all $\alpha\in \mathbb{R}$, we define
\[
\bbracket{\alpha f} (x_1,\ldots , x_r) = \sum_{ \substack{\nu_1,\ldots,\nu_r\geq 0\\  \nu_1+\cdots +\nu_r=d}   } a_{\nu_1,\ldots ,\nu_r} \bbracket{\alpha x_1^{\nu_1} \cdots x_r^{\nu_r}}.
\]
The goal of this section is to develop the following theorem.
\begin{theorem}\label{Theorem-homogeneous}
 Let $K\in \mathbb{N}$. For every $j\in [K]$, let $f_j:\mathbb{R}^r \to \mathbb{R}$ be a non-zero homogeneous polynomial with coefficients in $\mathbb{Z}$. Assume that there exists $(x_1,\ldots ,x_r)\in \mathbb{N}^r$ such that for all $j\in [K]$, we have $f_j(x_1,\ldots , x_r)=0$. Then for all $\alpha\in \mathbb{R}$,  
 \[
\liminf_{N\to \infty} \frac{1}{N} \#\{n\in [N] \colon \bbracket{\alpha f_j} (nx_1,\ldots , nx_r)=0 \text{ for all $j\in [K]$} \}>0.
\]
\end{theorem}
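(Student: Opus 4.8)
The plan is to fix a solution $(x_1,\dots,x_r)\in\mathbb{N}^r$ of the system and an arbitrary $\alpha\in\mathbb{R}$, and to exhibit, with positive lower density, integers $n$ for which $\alpha n^{d}$ is forced $\eta$-close to an integer for every relevant exponent $d$; for such $n$ all $K$ of the integers $\bbracket{\alpha f_j}(nx_1,\dots,nx_r)$ must vanish.

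\textbf{Step 1 (reduction to the sequence $(\alpha n^{d})$).} Write $f_j=\sum_{\nu}a^{(j)}_\nu x_1^{\nu_1}\cdots x_r^{\nu_r}$, the sum over multi-indices with $\nu_1+\cdots+\nu_r=d_j:=\deg f_j$, and set $c_\nu:=x_1^{\nu_1}\cdots x_r^{\nu_r}\in\mathbb{N}$. Since $f_j\neq 0$ is homogeneous and vanishes at $(x_1,\dots,x_r)$, we have $d_j\ge 1$ and $f_j(nx_1,\dots,nx_r)=n^{d_j}f_j(x_1,\dots,x_r)=0$ for every $n\in\mathbb{N}$. Writing $\bbracket{\alpha c_\nu n^{d_j}}=\alpha c_\nu n^{d_j}-\theta_{j,\nu,n}$, so that $|\theta_{j,\nu,n}|=\eta(\alpha c_\nu n^{d_j})$, we obtain
\[
\bbracket{\alpha f_j}(nx_1,\dots,nx_r)=\alpha n^{d_j}f_j(x_1,\dots,x_r)-\sum_{\nu}a^{(j)}_\nu\theta_{j,\nu,n}=-\sum_{\nu}a^{(j)}_\nu\theta_{j,\nu,n},
\]
hence $\bigl|\bbracket{\alpha f_j}(nx_1,\dots,nx_r)\bigr|\le M\cdot\max_{j,\nu}\eta(\alpha c_\nu n^{d_j})$ with $M:=\max_{j}\sum_{\nu}|a^{(j)}_\nu|$. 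By (C3), $\eta(\alpha c_\nu n^{d_j})=\eta\bigl(c_\nu\cdot\alpha n^{d_j}\bigr)\le c_\nu\,\eta(\alpha n^{d_j})\le g\,\eta(\alpha n^{d_j})$, where $g:=\max_{j,\nu}c_\nu$; and by (C1), $\eta(\alpha n^{d})=\eta(\{\alpha n^{d}\})$. Therefore, putting $D:=\max_j d_j\ (\ge1)$, any $n$ with $\eta(\alpha n^{d})\le 1/(2Mg)$ for all $d\in\{1,\dots,D\}$ satisfies $\bigl|\bbracket{\alpha f_j}(nx_1,\dots,nx_r)\bigr|\le 1/2$ and hence, being an integer, $\bbracket{\alpha f_j}(nx_1,\dots,nx_r)=0$ for every $j\in[K]$. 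It thus remains to show the set of such $n$ has positive lower density.

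\textbf{Step 2 (the two cases).} If $\alpha=p/q\in\mathbb{Q}$ in lowest terms (this includes $\alpha=0$, with $q=1$), then for every $n$ divisible by $q$ and every $d\ge1$ we have $\alpha n^{d}=p\,q^{\,d-1}(n/q)^{d}\in\mathbb{Z}$, so $\eta(\alpha n^{d})=0$ by (C2); as the multiples of $q$ have density $1/q>0$, the rational case is done. Now suppose $\alpha\notin\mathbb{Q}$. The sequence $\bigl(\alpha n,\alpha n^{2},\dots,\alpha n^{D}\bigr)_{n\in\mathbb{N}}$ is uniformly distributed modulo $1$ on $[0,1)^{D}$: by Weyl's criterion (\eqref{Property-UDM-1}$\Leftrightarrow$\eqref{Property-UDM-2}) it suffices that $N^{-1}\sum_{n=1}^{N}e\bigl(\sum_{d=1}^{D}h_d\alpha n^{d}\bigr)\to 0$ for each $\mathbf{h}=(h_1,\dots,h_D)\in\mathbb{Z}^{D}\setminus\{0\}$, and $\sum_{d}h_d\alpha n^{d}$ is a real polynomial in $n$ with zero constant term whose leading coefficient $h_{d^{*}}\alpha$ (where $d^{*}=\max\{d:h_d\neq0\}\ge1$) is irrational, so this follows from Weyl's theorem on the equidistribution modulo $1$ of $(p(n))_n$ for real polynomials $p$ having an irrational non-constant coefficient (see \cite[Chapter~1]{KuipersNiederreiter}). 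Set $t_0:=1/(2Mg)>0$ and $B:=\eta^{-1}([0,t_0])\cap[0,1)$, which by (C4) is Jordan measurable with $\mu_1(B)>0$; then $B^{D}\subseteq[0,1)^{D}$ is Jordan measurable with $\mu_D(B^{D})=\mu_1(B)^{D}>0$. Applying \eqref{Property-UDM-3} with $X=B^{D}$,
\[
\lim_{N\to\infty}\frac1N\#\bigl\{n\in[N]\colon(\{\alpha n\},\dots,\{\alpha n^{D}\})\in B^{D}\bigr\}=\mu_1(B)^{D}>0,
\]
and every such $n$ satisfies $\eta(\alpha n^{d})\le t_0$ for all $d\le D$, hence $\bbracket{\alpha f_j}(nx_1,\dots,nx_r)=0$ for all $j\in[K]$ by Step~1. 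This yields the claimed positive lower density, and the theorem follows.

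\textbf{Main obstacle.} The one point that requires care is that one must \emph{not} try to equidistribute the raw vector $\bigl(\alpha c_\nu n^{d_j}\bigr)_{j,\nu}$: it is in general \emph{not} uniformly distributed modulo $1$, because the coefficient vector of any single $f_j$ gives a nonzero integer combination of its coordinates that vanishes identically (since $f_j(nx_1,\dots,nx_r)=0$), violating Weyl's criterion. Property (C3) is exactly what permits peeling off the integer factors $c_\nu$ and reducing to the harmless polynomial system $(\alpha n^{d})_{1\le d\le D}$, whose equidistribution for irrational $\alpha$ is classical. Property (C4) is precisely the hypothesis ensuring that the target set $B^{D}$ is Jordan measurable of positive measure, so that the counting version \eqref{Property-UDM-3} of Weyl's theorem — rather than mere non-emptiness — is available; and the rational case needs nothing beyond the observation that multiples of the denominator are automatically admissible.
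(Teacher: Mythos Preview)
Your proof is correct and follows essentially the same approach as the paper's: reduce via (C3) from the monomial values $\alpha c_\nu n^{d_j}$ to the pure powers $\alpha n^{d}$, handle rational $\alpha$ by taking multiples of the denominator, and handle irrational $\alpha$ by Weyl equidistribution of the vector of powers together with (C1) and (C4). The only cosmetic difference is that you equidistribute the full vector $(\alpha n,\alpha n^{2},\dots,\alpha n^{D})$ for $D=\max_j d_j$, whereas the paper first extracts the distinct degrees $d_{j_1}<\cdots<d_{j_s}$ and equidistributes only $(\alpha n^{d_{j_1}},\dots,\alpha n^{d_{j_s}})$; your choice avoids the renumbering step at the harmless cost of a few superfluous coordinates.
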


\begin{proof}[Proof of Theorem~\ref{Theorem-infinitePEBs} assuming Theorem~\ref{Theorem-homogeneous}] Assume that there is a perfect Euler brick. Fix any $\alpha>0$. Then there exist $k, \ell, m, a, b, c, d\in \mathbb{N}$ with $k\leq \ell\leq m$ such that 
\begin{gather*}
    k^2+\ell^2 =a^2,\quad \ell^2+m^2=b^2,\quad m^2+k^2=c^2,\quad k^2+\ell^2+m^2=d^2.
\end{gather*}
Let $A$ be the set of all $n\in \mathbb{N}$ such that
\begin{gather*}
    \floor{\alpha (nk)^2}+\floor{\alpha (n\ell)^2} =\floor{\alpha (na)^2},\\
    \floor{\alpha (n\ell)^2}+\floor{ \alpha( nm)^2}=\floor{ \alpha (nb)^2},\\  
    \floor{ \alpha (nm)^2}+\floor{ \alpha (nk)^2}=\floor{ \alpha (nc)^2},\\
    \floor{ \alpha (nk)^2}+\floor{ \alpha (n\ell)^2}+\floor{ \alpha (nm)^2}=\floor{ \alpha (nd)^2}.
\end{gather*}
Note that each $n\in A$ satisfies $(\floor{\alpha (nk)^2},\floor{\alpha (n\ell)^2}, \floor{ \alpha( nm)^2})\in T(\alpha)$. By Theorem~\ref{Theorem-homogeneous} with $\bbracket{\:\cdot\:}=\floor{\:\cdot\:}$, we have
\[
\liminf_{x\to \infty} \frac{\#T_{\leq x}(\alpha)}{x} \geq \liminf_{x\to \infty}\frac{1}{m} \frac{\#(A\cap [1, x/m ]) }{x/m}>0.
\]
\end{proof}

%\begin{lemma}[van der Corput's difference theorem]\label{Lemma-vanderCorput} Let $(x_n)_{n=1}^\infty$be a given sequence of real numbers. If for every positive integer $h$, the sequence $(x_{n+h} - x_n)_{n=1}^\infty$ is uniformly distributed modulo $1$, then $(x_n)_{n=1}^\infty$ is uniformly distributed modulo $1$.
%\end{lemma}
%\begin{proof}
%See \cite[Theorem~3.1]{KuipersNiederreiter}.
%\end{proof}

\begin{lemma}\label{Lemma-irrational-distribution}
Let $p(x) = c_mx^m + \cdots + c_1x+c_0, m\geq 1$, be a polynomial with real coefficients and let at least one of the coefficients $c_j$ with $j > 0$ be irrational. Then the sequence $(p(n))_{\in \mathbb{N}}$ is uniformly distributed modulo $1$.
\end{lemma}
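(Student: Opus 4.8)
The plan is to derive this classical equidistribution statement from Weyl's criterion on $[0,1)$ (the equivalence of the first two properties listed before Lemma~\ref{Lemma-almostall}, specialised to $d=1$) together with van der Corput's difference inequality, by induction on the degree $m$ of $p$. First I would reduce matters to an exponential-sum estimate: by Weyl's criterion it suffices to show that $\frac1N\sum_{n=1}^N e(hp(n))\to 0$ for every $h\in\Z\setminus\{0\}$, and since the coefficient of $x^j$ in $hp$ is $hc_j$, which is irrational exactly when $c_j$ is, the whole lemma reduces to the following claim: \emph{if $q(x)=b_mx^m+\cdots+b_1x+b_0$ has $m\geq 1$ and at least one of $b_1,\dots,b_m$ irrational, then $\frac1N\sum_{n=1}^N e(q(n))\to 0$.} I would prove this claim by induction on $m$. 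The base case $m=1$ is the classical fact that $(n\beta)_n$ is uniformly distributed modulo $1$ for irrational $\beta$: here $b_1$ is irrational and $\sum_{n=1}^N e(q(n))=e(b_0)\sum_{n=1}^N e(b_1n)$ is a geometric sum bounded in absolute value by $|1-e(b_1)|^{-1}<\infty$, so dividing by $N$ gives $0$.

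For the inductive step, assume the claim for all degrees $<m$ and first treat the case in which the leading coefficient $b_m$ is irrational. I would invoke van der Corput's inequality (see \cite{KuipersNiederreiter}): for every integer $1\leq H\leq N$,
\[
\left|\frac1N\sum_{n=1}^N e(q(n))\right|^2
\leq \frac{N+H-1}{NH} + \frac{2(N+H-1)}{N^2H}\sum_{k=1}^{H-1}\left|\sum_{n=1}^{N-k} e\bigl(q(n+k)-q(n)\bigr)\right|.
\]
For each fixed $k\geq 1$, the difference $q(n+k)-q(n)$ is a polynomial in $n$ of degree $m-1$ with leading coefficient $m\,b_m\,k$, which is irrational because $b_m$ is irrational and $m,k$ are nonzero integers; hence the induction hypothesis gives $\frac1N\sum_{n=1}^{N-k}e(q(n+k)-q(n))\to 0$. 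Letting $N\to\infty$ with $H$ fixed yields $\limsup_{N\to\infty}\left|\frac1N\sum_{n=1}^N e(q(n))\right|^2\leq 1/H$, and since $H$ is arbitrary this limsup is $0$.

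It remains to handle the case in which $b_m=a/b$ is rational in lowest terms with $b\geq 1$, but some $b_j$ with $1\leq j<m$ is irrational. Here I would split the sum into residue classes modulo $b$: writing $n=bt+s$ with $0\leq s<b$, expansion of $b_m(bt+s)^m$ shows that $b_m n^m$ differs from the constant $as^m/b$ by an integer, so $e(b_mn^m)=e(as^m/b)$ depends only on $s$. Consequently $\sum_{n\leq N}e(q(n))$ equals, up to a bounded error, a sum over the $b$ residues $s$ of $e(as^m/b)$ times $\sum_{t}e(r_s(t))$, where $r_s(t)$ is obtained from $b_0+b_1x+\cdots+b_{m-1}x^{m-1}$ by the substitution $x=bt+s$, a polynomial in $t$ of degree at most $m-1$. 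Taking $j_0$ to be the largest index $<m$ with $b_{j_0}$ irrational, the coefficient of $t^{j_0}$ in $r_s$ equals $b^{j_0}$ times $b_{j_0}$ plus a rational combination of the (rational) coefficients $b_{j_0+1},\dots,b_{m-1}$, hence is irrational; so $r_s$ has an irrational coefficient in positive degree $\leq m-1$, and the induction hypothesis gives $\frac1M\sum_{t\leq M}e(r_s(t))\to 0$ for each $s$. Summing over the finitely many residue classes shows $\frac1N\sum_{n\leq N}e(q(n))\to 0$, which completes the induction and the proof.

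I expect the main obstacle to be the van der Corput step: one must state the inequality in the correctly normalised form and verify carefully that passing to the iterated difference polynomial lowers the degree while retaining an irrational coefficient in positive degree, so that the induction hypothesis genuinely applies at degree $m-1$. The base case, the residue-class reduction handling a rational leading coefficient, and the passage $H\to\infty$ are then routine.
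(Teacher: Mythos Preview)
Your argument is correct and is essentially the classical Weyl--van der Corput proof. The paper, however, does not prove this lemma at all: its entire ``proof'' is the single line ``See \cite[Theorem~3.2]{KuipersNiederreiter}.'' So there is nothing to compare at the level of method---you have supplied a self-contained proof where the paper only gives a citation, and the proof you give is the standard one from that very reference.

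One small point worth tightening in your write-up: in the rational-leading-coefficient case, when you substitute $n=bt+s$ into $q$, the term $b_m n^m$ contributes not only the constant $as^m/b$ but also terms in $t^1,\dots,t^m$; you correctly note that these are integers and hence invisible to $e(\cdot)$, but it would be clearer to say explicitly that $r_s(t)$ is defined as $q(bt+s)$ \emph{modulo integer-valued polynomials in $t$}, rather than as the substitution into the truncated polynomial $b_0+\cdots+b_{m-1}x^{m-1}$ alone. This is only a presentational issue; the mathematics is sound.
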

\begin{proof}
%Let $d$ be the degree of $g$. We can use induction on $d$. In the case of $d=1$, by Weyl's criterion and \eqref{Inequality-exponential-sums},  $(g(n))_{n=1}^\infty$ is uniformly distributed modulo $1$. Assume that the lemma is true for $d-1$, where $d\geq 2$. Let $h$ be an arbitrary positive integer. Then $g(x+h)-g(x)$ is a polynomial with real coefficients of degree $d-1$. Further, the leading coefficient of $g(x+h)-g(x)$ is also irrational. Therefore, by the inductive hypothesis, $(g(n+h)-g(n))_{n=1}^\infty$ is uniformly distributed modulo $1$. From Lemma~\ref{Lemma-vanderCorput}, we deduce that $(g(n))_{n=1}^\infty$ is uniformly distributed modulo $1$. 
See \cite[Theorem~3.2]{KuipersNiederreiter}.
\end{proof}

\begin{lemma}\label{Lemma-joint-distribution}
Let $d_1,\ldots, d_s$ be pairwise distinct positive integers. Let $\alpha\in \mathbb{R}$ be irrational. Then $((\alpha n^{d_1}, \ldots , \alpha n^{d_s}) )_{n=1}^\infty$ is uniformly distributed modulo $1$ on $[0,1)^s$.
\end{lemma}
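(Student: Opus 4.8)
The plan is to verify the multidimensional Weyl criterion, i.e.\ the equivalence \eqref{Property-UDM-1} $\Leftrightarrow$ \eqref{Property-UDM-2} from Section~\ref{Section-UniformDistribution}. Fix $\mathbf{h}=(h_1,\ldots,h_s)\in\mathbb{Z}^s\setminus\{(0,\ldots,0)\}$; it then suffices to prove
\[
\frac{1}{N}\sum_{n=1}^{N} e\big(\langle \mathbf{h},(\alpha n^{d_1},\ldots,\alpha n^{d_s})\rangle\big)\longrightarrow 0\qquad(N\to\infty).
\]
First I would rewrite the inner product as a single real polynomial evaluated at $n$: set $p(x)=\alpha\sum_{j=1}^{s}h_j x^{d_j}$, so that $\langle \mathbf{h},(\alpha n^{d_1},\ldots,\alpha n^{d_s})\rangle=p(n)$ for all $n\in\mathbb{N}$. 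Because $d_1,\ldots,d_s$ are pairwise distinct, the monomials $x^{d_j}$ are distinct, so the coefficient of $x^{d_j}$ in $p$ is exactly $\alpha h_j$; in particular no cancellation among the different $d_j$ can occur.

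Next I would locate an irrational coefficient attached to a positive power of $x$. Choose $j_0$ with $h_{j_0}\neq 0$, which exists since $\mathbf{h}\neq(0,\ldots,0)$. Then $d_{j_0}\geq 1$ because all $d_j$ are positive integers, and the coefficient of $x^{d_{j_0}}$ in $p$ is $\alpha h_{j_0}$, the product of the irrational number $\alpha$ with a nonzero integer, hence irrational. Consequently $p$ is a real polynomial of degree $\deg p=\max\{d_j:h_j\neq 0\}\geq 1$ having at least one coefficient $c_j$ with $j>0$ irrational, so Lemma~\ref{Lemma-irrational-distribution} applies and tells us that $(p(n))_{n\in\mathbb{N}}$ is uniformly distributed modulo $1$.

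Finally, applying the one-dimensional Weyl criterion (the case $d=1$ of \eqref{Property-UDM-1} $\Leftrightarrow$ \eqref{Property-UDM-2}, taking the single frequency $\mathbf{h}=1$) to the sequence $(p(n))_{n\in\mathbb{N}}$ gives $\tfrac1N\sum_{n=1}^{N}e(p(n))\to 0$, which is precisely the estimate required above. Since $\mathbf{h}$ was an arbitrary nonzero element of $\mathbb{Z}^s$, the multidimensional Weyl criterion is satisfied, and therefore $((\alpha n^{d_1},\ldots,\alpha n^{d_s}))_{n\in\mathbb{N}}$ is uniformly distributed modulo $1$ on $[0,1)^s$. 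I do not expect a real obstacle here: the argument is essentially bookkeeping once Lemma~\ref{Lemma-irrational-distribution} is in hand, and the only place the hypotheses are genuinely used is the pairwise distinctness of the $d_j$, without which $\sum_j h_j x^{d_j}$ could collapse to a constant and Lemma~\ref{Lemma-irrational-distribution} would fail to apply.
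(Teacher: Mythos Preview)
Your proof is correct and follows essentially the same approach as the paper: both verify the multidimensional Weyl criterion by observing that for each nonzero $\mathbf{h}$ the one-dimensional sequence $\big(\sum_j h_j\alpha n^{d_j}\big)_n$ is uniformly distributed modulo~$1$ via Lemma~\ref{Lemma-irrational-distribution}, then apply the one-dimensional Weyl criterion. You simply spell out in more detail why the polynomial has an irrational coefficient on a positive power, a point the paper leaves implicit.
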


\begin{proof}
Fix any $\mathbf{h}=(h_1,\ldots ,h_s)\in \mathbb{Z}\setminus \{0\}$. Then $(h_1\alpha n^{d_1}+\cdots + h_s \alpha n^{d_s} )_{n=1}^\infty$ is uniformly distributed modulo $1$ on $[0,1)$ by Lemma~\ref{Lemma-irrational-distribution}. Therefore, by Weyl's criterion on $[0,1)$,
\[
\lim_{N\to \infty} \frac{1}{N} \sum_{n=1}^N e(h_1\alpha n^{d_1}+\cdots + h_s \alpha n^{d_s}  ) =0.  
\]
This implies that $((\alpha n^{d_1}, \ldots , \alpha n^{d_s}))_{n=1}^\infty$ is uniformly distributed modulo $1$ by applying Weyl's criterion on $[0,1)^s$.
\end{proof}

\begin{proof}[Proof of Theorem~\ref{Theorem-homogeneous}]
By the assumption, we fix a tuple $(x_1,\ldots ,x_r)\in \mathbb{N}^r$ such that for all $j\in [K]$, we have $f_j(x_1,\ldots , x_r)=0$. Fix any $\alpha\in \mathbb{R}$. Let $d_j$ be the degree of $f_j$. Then for all $j\in [K]$ and $n\in \mathbb{N}$, 
\begin{align*}
|\bbracket{\alpha f_j} (nx_1,\ldots , nx_r)|&\leq |\alpha f_j (nx_1,\ldots , nx_r)- \bbracket{\alpha f_j} (nx_1,\ldots , nx_r)|\\
&\leq C \sum_{\substack{\nu_1,\ldots,\nu_r\geq 0\\  \nu_1+\cdots +\nu_r=d_j} }  \eta ( \alpha n^{d_j} x_1^{\nu_1} \cdots x_r^{\nu_r}) 
\end{align*}
for some constant $C>0$ that depends only on $f_1,\ldots ,f_r$ and $K$. Let $P$ be the maximum number of tuples $(\nu_1,\ldots, \nu_r)\in \mathbb{Z}^r$ such that $\nu_1,\ldots,\nu_r\geq 0$ and $\nu_1+\cdots +\nu_r=d_j$ for  $j\in [K]$. 
Let 
\begin{align*}
A=\{n\in \mathbb{N} \colon  & 0\leq \eta (\alpha n^{d_j} x_1^{\nu_1} \cdots x_r^{\nu_r})< (CP)^{-1}   \text{ for all } j\in [K] \\
& \text{and for all } \nu_1,\ldots,\nu_r\geq 0 \text{ with }  \nu_1+\cdots +\nu_r=d_j   \}.
\end{align*}
Take any $n\in A$. We have $|\bbracket{\alpha f_j} ( nx_1,\ldots , nx_r)|<1$ for every $j\in [N]$. Hence, we deduce
\[
\bbracket{\alpha f_j} (nx_1,\ldots , nx_r)=0
\]
for every $j\in [K]$, since $\bbracket{\alpha f_j} (nx_1,\ldots , nx_r)\in \mathbb{N}$. Therefore, it is sufficient to show that 
\begin{equation}\label{Inequality-liminf-A}
\liminf_{N\to \infty} \#(A\cap [N])/N>0. 
\end{equation}
When $\alpha$ is rational, there exist integers $p,q$ such that $\alpha=p/q$. By using (C2), substituting $n=q, 2q, 3q,\ldots$ yields  $\eta (\alpha n^{d_j} x_1^{\nu_1} \cdots x_r^{\nu_r})=0$. Therefore, \eqref{Inequality-liminf-A} holds. In the case where $\alpha$ is irrational, if necessary, by renumbering $d_1,\ldots, d_K$, we may assume that there exist $ 1\leq j_1<j_2<\cdots <j_s=K$ such that
\[
1\leq d_1= \cdots =d_{j_1} < d_{j_1+1}=\cdots = d_{j_2} < \cdots < d_{j_{s-1}+1}=\cdots =d_{j_s}=d_K.   
\]
Let $M=(x_1\cdots x_r)^{d_K}$, and define
\[
B=\Pas{n\in \mathbb{N} \colon 0\leq \eta\left(\alpha n^{d_{j_u} }\right)\leq (2CP M)^{-1} \text{ for all } u\in [s]  }.  
\]
From the irrationality of $\alpha$, Lemma~\ref{Lemma-joint-distribution} shows that $((\alpha n^{d_{j_1}}, \ldots, \alpha n^{d_{j_s}})  )_{n=1}^\infty$ is uniformly distributed modulo $1$ on $[0,1)^s$. Therefore, by (C1) and (C4), we obtain
\begin{align*}
 &\frac{1}{N}\#(B\cap [1,N]) \\
 &= \frac{1}{N} \#\Pas{n\in [N] \colon \Pas{(\alpha n^{d_{j_1}},\ldots,\alpha n^{d_{j_s}} )}  \in \left(\eta^{-1}\pas{ \left[0, \frac{1}{2CPM}\right] } \cap [0,1)\right)^s   } \\
 &\to \mu_s \left(\pas{ \eta^{-1}\pas{ \left[0, \frac{1}{2CPM}\right] } \cap [0,1)}^s\: \right)>0 \quad \text{(as $N\to \infty$) }.
\end{align*}
Thus, $B$ has positive density. Further, by (C3), each $n\in B$ satisfies the condition that for all  $j\in [K]$ and integers $\nu_1,\ldots,\nu_r\geq 0$ \text{ with }  $\nu_1+\cdots +\nu_r=d_j$,
\begin{align*}
\eta (\alpha n^{d_j} x_1^{\nu_1} \cdots x_r^{\nu_r}) \leq x_1^{\nu_1} \cdots x_r^{\nu_r} \eta (\alpha n^{d_j} ) \leq M (2CPM)^{-1} <(CP)^{-1}.
\end{align*}
Therefore, $B\subseteq A$, which yields \eqref{Inequality-liminf-A}. 
\end{proof}

\section{Further discussions}\label{Section-FurtherDiscussion}
From Corollary~\ref{Corollary-lowerT-positive}, Theorem~\ref{Theorem-lowerT-almostall}, and Theorem~\ref{Theorem-infinitePEBs}, we pose the following question.
\begin{question}\label{Problem-zeropoint}
Is there $\alpha\in(0,1)$ such that $T(\alpha)$ is empty?
\end{question}
By Theorem~\ref{Theorem-infinitePEBs}, if we were to obtain an affirmative answer to this question, then there would be no perfect Euler brick. Note that $T(\alpha)$ is non-empty for all $\alpha\in(0,1/9)$. Indeed,  it is enough to show that $1,2,3\in S(\alpha)$. For every $\alpha\in(0,7-4\sqrt{3}]$,
	\[
	1\leq \sqrt{\frac{4}{\alpha}}-\sqrt{\frac{3}{\alpha}}\leq\sqrt{\frac{3}{\alpha}}-\sqrt{\frac{2}{\alpha}}\leq\sqrt{\frac{2}{\alpha}}-\frac{1}{\sqrt{\alpha}}.
	\]
	Thus, the intervals $[1/\sqrt{\alpha},\sqrt{2/\alpha}), [\sqrt{2/\alpha},\sqrt{3/\alpha}), [\sqrt{3/\alpha},\sqrt{4/\alpha})$ contain some integers. This implies that there exist $n_1,n_2,n_3\in \mathbb{N}$ such that $\floor{\alpha n_1^2}=1$, $\floor{\alpha n_2^2}=2$, and $\floor{\alpha n_3^2}=3$. In addition, we obtain $\floor{16\alpha}=1$, $\floor{36\alpha}=2$, and $\floor{49\alpha}=3$ for all $\alpha\in [1/16,4/49)$. We also find that $\floor{36\alpha}=2$, $\floor{49\alpha}=4$,  and $\floor{81\alpha}=6$ for all $\alpha\in [4/49,1/12)$;  $\floor{16\alpha}=1$, $\floor{25\alpha}=2$, and $\floor{36\alpha}=3$ for all $\alpha\in [1/12,1/9)$. Therefore, $T(\alpha)$ is non-empty for all $\alpha\in (0,1/9)$.
\begin{conjecture}\label{Conjecture-numberofTx(alpha)}
 For suitable $\alpha\in (0,1)$, there exists $\lambda=\lambda(\alpha)>0$ such that 
 \[
 \# T_{\leq x}(\alpha)=\lambda \log x +o(\log x) \quad (\text{as } x\to \infty).
 \]
 \end{conjecture}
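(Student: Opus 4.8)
The plan is to fix the constant by extracting it from the Pell construction that already yields the lower bounds in Theorem~\ref{Theorem-lowerT-Q} and Theorem~\ref{Theorem-lowerT-almostall}, and then to prove a matching upper bound by classifying the elements of $T(\alpha)$. First I would revisit that construction: every triple $(k,k,m)\in T(\alpha)$ it produces comes from a solution $(x,y)$ of a Fermat--Pell equation $x^2-2y^2=c$ together with the ``length-three progression'' data carried by the indices $y^2/2-1,\,y^2/2,\,y^2/2+1$, and such solutions grow geometrically with ratio $\phi_P^{2}=(1+\sqrt2)^{2}$; counting those that are actually realized with all indices $\le x$ gives $\dfrac{\rho(\alpha)+o(1)}{4\log\phi_P}\,\log x$, where $\rho(\alpha)$ is the density of the ``good'' index set (a congruence density when $\alpha\in\mathbb Q$, the Jordan measure appearing in the proof of Theorem~\ref{Theorem-lowerT-almostall} for a.e.\ $\alpha$). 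Summing over the finitely many relevant families --- the admissible values of $c$ with $|c|\le 2/\alpha$ and the choice of which coordinate is the repeated one --- would give the candidate value of $\lambda(\alpha)$, so the whole question reduces to the upper bound $\#T_{\le x}(\alpha)\le(\lambda(\alpha)+o(1))\log x$, since at present Theorem~\ref{Theorem-Upperbounds} only gives $\ll x(\log x)^{15}$.

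For the upper bound I would follow the reduction in Section~\ref{Section-Upperbounds}: any $(k,\ell,m)\in T_{\le x}(\alpha)$ forces integers $n_1,\dots,n_7\le x$ and bounded integers $a_1,a_2,a_3,a_4$ with $|a_i|\le 2/\alpha$ such that $n_1^2+n_2^2=n_4^2+a_1$, $n_1^2+n_3^2=n_5^2+a_2$, $n_2^2+n_3^2=n_6^2+a_3$ and $n_1^2+n_2^2+n_3^2=n_7^2+a_4$. The aim would then be a structural statement: for each fixed bounded $(a_1,a_2,a_3,a_4)$, the integer points on this system, intersected with the realizability conditions ``$\{\alpha n_i^2\}$ small'', all lie --- apart from $O(1)$ one-parameter families --- on finitely many twists of a Pell conic, hence contribute $\ll\log x$; and only the families produced by the construction above carry infinitely many \emph{realizable} points, the others dying because the corresponding fractional-part inequalities hold for a density-zero set of parameters. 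Combining this with the divisor-sum machinery of Section~\ref{Section-Upperbounds} to dispose of the non-generic ranges would yield the matching bound.

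The step I expect to be the real obstacle is precisely this classification. Controlling which ``near-Pythagorean'' configurations $n_i^2+n_j^2\approx n_k^2$ are simultaneously realizable by the discretization $n\mapsto\lfloor\alpha n^2\rfloor$ is a genuinely hard Diophantine problem; note that the slice $a_1=a_2=a_3=a_4=0$ is literally the counting function of perfect Euler bricks, whose finiteness is open, so no proof of the conjecture valid for every ``suitable'' $\alpha$ can sidestep this. Accordingly I would first aim only at the right order of magnitude, $\#T_{\le x}(\alpha)=\Theta(\log x)$ for $\alpha\in(0,1)\cap\mathbb Q$ (or for a.e.\ $\alpha$), by refining Section~\ref{Section-Upperbounds} with a uniform estimate for the number of $n\le x$ at which $\{\alpha n^2\}$ is small along several quadratic sequences at once --- a statement where the Pell recursion should be forced rather than assumed --- and only afterwards try to sharpen $\Theta(\log x)$ into the asymptotic with the explicit $\lambda(\alpha)$ computed in the first step.
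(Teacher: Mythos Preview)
The statement you are addressing is labelled \emph{Conjecture} in the paper, and the paper offers no proof of it; it is presented as an open problem supported only by the numerical data in Figures~\ref{Figure1}--\ref{Figure2} and by the logarithmic lower bounds of Theorems~\ref{Theorem-lowerT-Q} and~\ref{Theorem-lowerT-almostall}. So there is no ``paper's own proof'' to compare your proposal against, and what you have written is best read as a research plan rather than a proof.

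As a plan, your lower-bound half is sound and is exactly what the paper already does: the Pell families give $\gg\log x$ triples, and the density factor $\rho(\alpha)$ you describe is precisely what the uniform-distribution argument in Section~\ref{Section-UniformDistribution} computes. The difficulty, as you yourself note, is entirely on the upper-bound side, and here there is a genuine gap you should make more explicit. You observe that the slice $a_1=a_2=a_3=a_4=0$ is the perfect Euler brick count, but the obstruction is stronger than ``one bad slice'': by Theorem~\ref{Theorem-infinitePEBs}, proving $\#T_{\le x}(\alpha)=o(x)$ for even a \emph{single} $\alpha>0$ already implies that no perfect Euler brick exists. Hence your proposed ``modest first aim'' of $\#T_{\le x}(\alpha)=\Theta(\log x)$ for some rational $\alpha$ is not a warm-up step --- it is, via the paper's own Theorem~\ref{Theorem-infinitePEBs}, at least as hard as the perfect Euler brick problem itself. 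Any refinement of the divisor-sum argument of Section~\ref{Section-Upperbounds} that brought the upper bound below linear would have the same consequence. In short, the structural classification you hope for cannot be carried out without simultaneously resolving the Euler brick question, so the plan as stated does not constitute a proof and cannot be completed with current techniques.
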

 Note that if the conjecture were true for some $\alpha\in (0,1)$, then there would be no perfect Euler brick by Theorem~\ref{Theorem-infinitePEBs}. To support this conjecture, we show the graphs of $\# T_{\leq x}(0.1)$ and $\# T_{\leq x}(0.2)$ for $0\leq x\leq 46300$ in Figure~\ref{Figure1}. 
 
 \begin{figure}[htbp]
 \caption{}\label{Figure1}
  \begin{minipage}[t]{0.48\linewidth}
    \centering
    \includegraphics[keepaspectratio, scale=0.3]{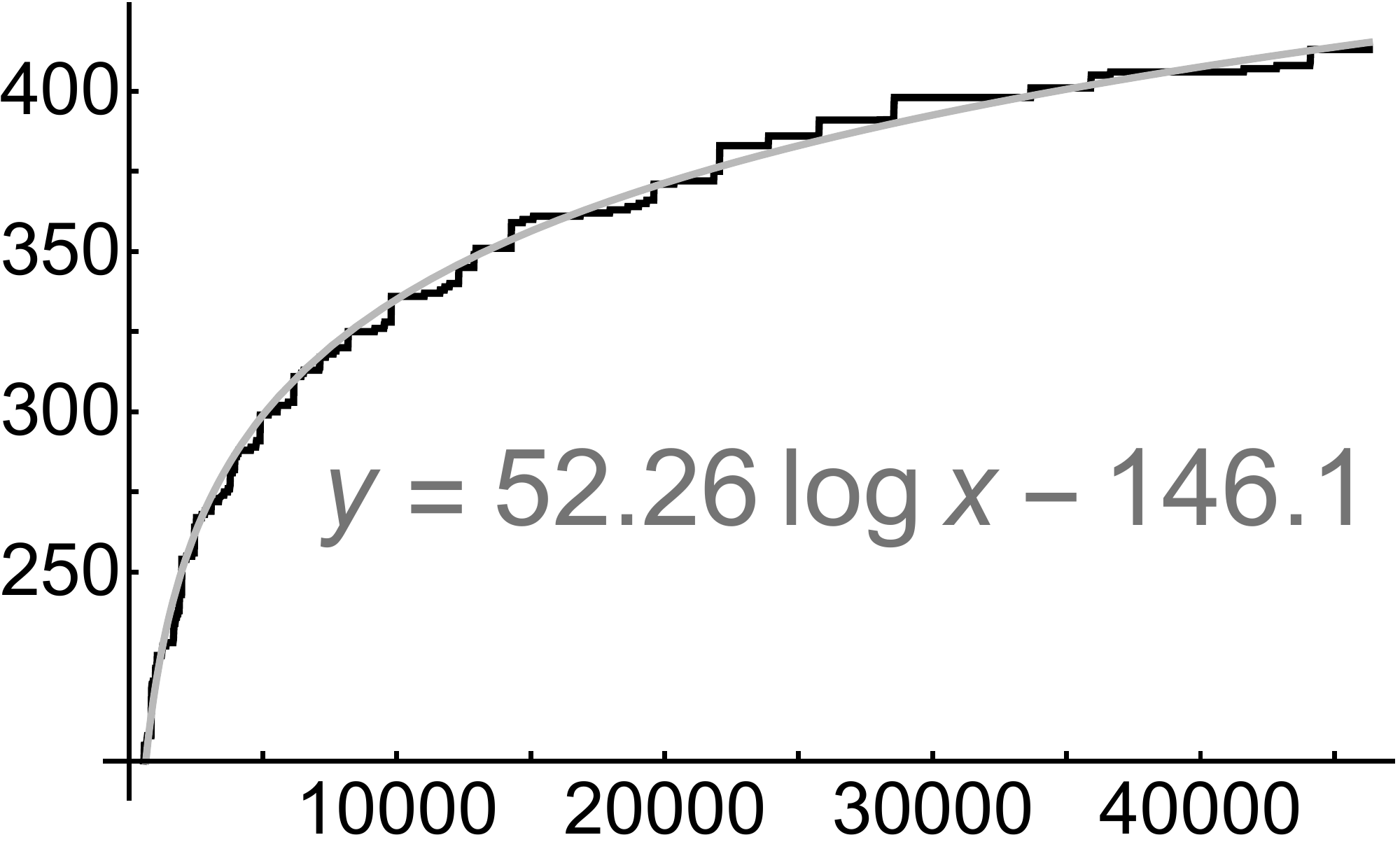}
    \caption*{$\alpha=0.1,\ x\leq 46300$}
  \end{minipage}
  \begin{minipage}[t]{0.48\linewidth}
    \centering
    \includegraphics[keepaspectratio, scale=0.3]{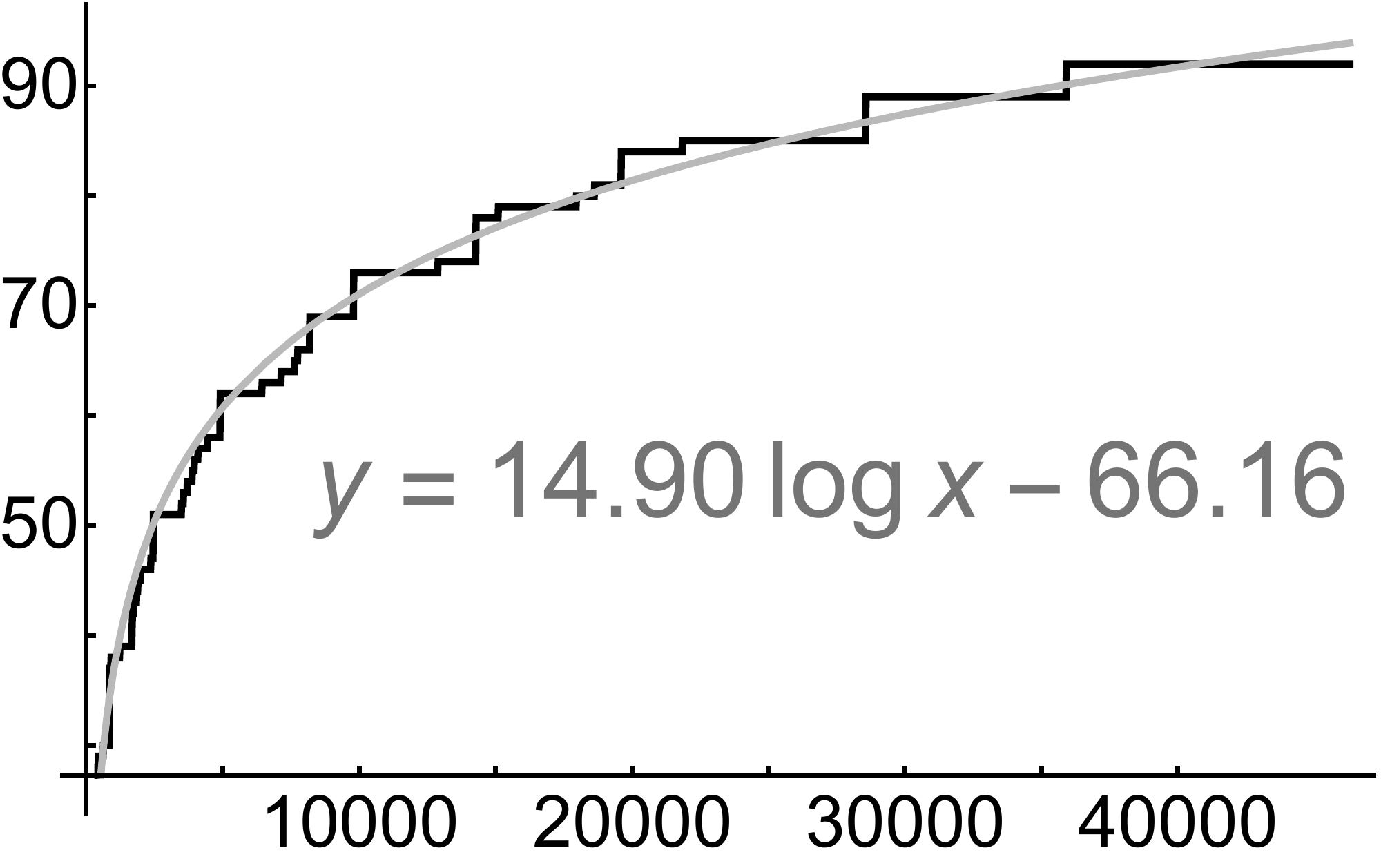}
    \caption*{$\alpha=0.2,\ x\leq 46300$}
  \end{minipage}
\end{figure}
 The step functions in the figure are $y=\#T_{\leq x}(\alpha)$ for $\alpha \in \{0.1, 0.2\}$. Interestingly, we observe that the step functions are well approximated by the functions of the form $y=\lambda \log x +\kappa$. Hence, these numerical calculations support that Conjecture~\ref{Conjecture-numberofTx(alpha)} is true. In addition, Theorem~\ref{Theorem-lowerT-Q} and  Theorem~\ref{Theorem-lowerT-almostall} provide theoretical support. Indeed, these theorems show that for some suitable $\alpha\in (0,1)$, there exists $\lambda'=\lambda'(\alpha)>0$ such that for sufficiently large $x>1$, we have $\# T_{\leq x} (\alpha) \geq \lambda' \log x$. It should be noted  that there would be large gaps between $\lambda'$ and $\lambda$, since Theorem~\ref{Theorem-lowerT-Q} shows that
 \[
 \# T_{\leq x} (0.1) \geq \floor{\frac{\log(16x) }{4\ceil{ \sqrt{2}\cdot 10 } \log (1+\sqrt{2}) } },
 \]
 and hence $\lambda'<(4\ceil{\sqrt{2}\cdot 10}\log (1+\sqrt{2}))^{-1}=0.018909\cdots$ is much less than $52.26$. 
 
 In the case of $\alpha\in \{0.3,0.4\}$, the number of $T_{\leq x}(\alpha)$ also appears to be of the form $\lambda \log x +\kappa$; however, the number is too small to allow us to infer whether the conjecture would be true (Figure~\ref{Figure2}). 
 
 \begin{figure}[htbp]
 \caption{}\label{Figure2}
  \begin{minipage}[t]{0.48\linewidth}
    \centering
    \includegraphics[keepaspectratio, scale=0.3]{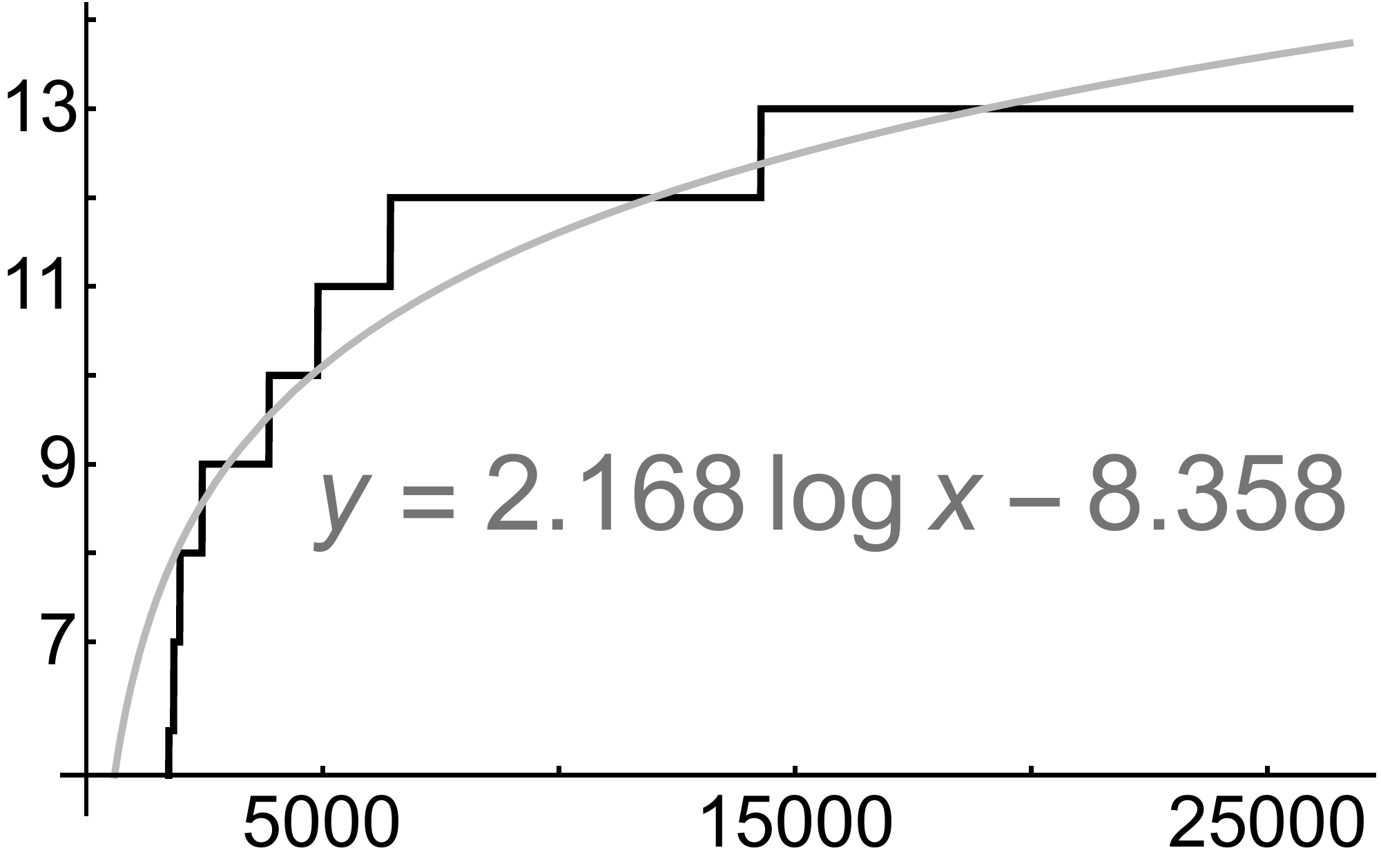}
    \caption*{$\alpha=0.3,\ x\leq 26750$}
  \end{minipage}
  \begin{minipage}[t]{0.48\linewidth}
    \centering
    \includegraphics[keepaspectratio, scale=0.3]{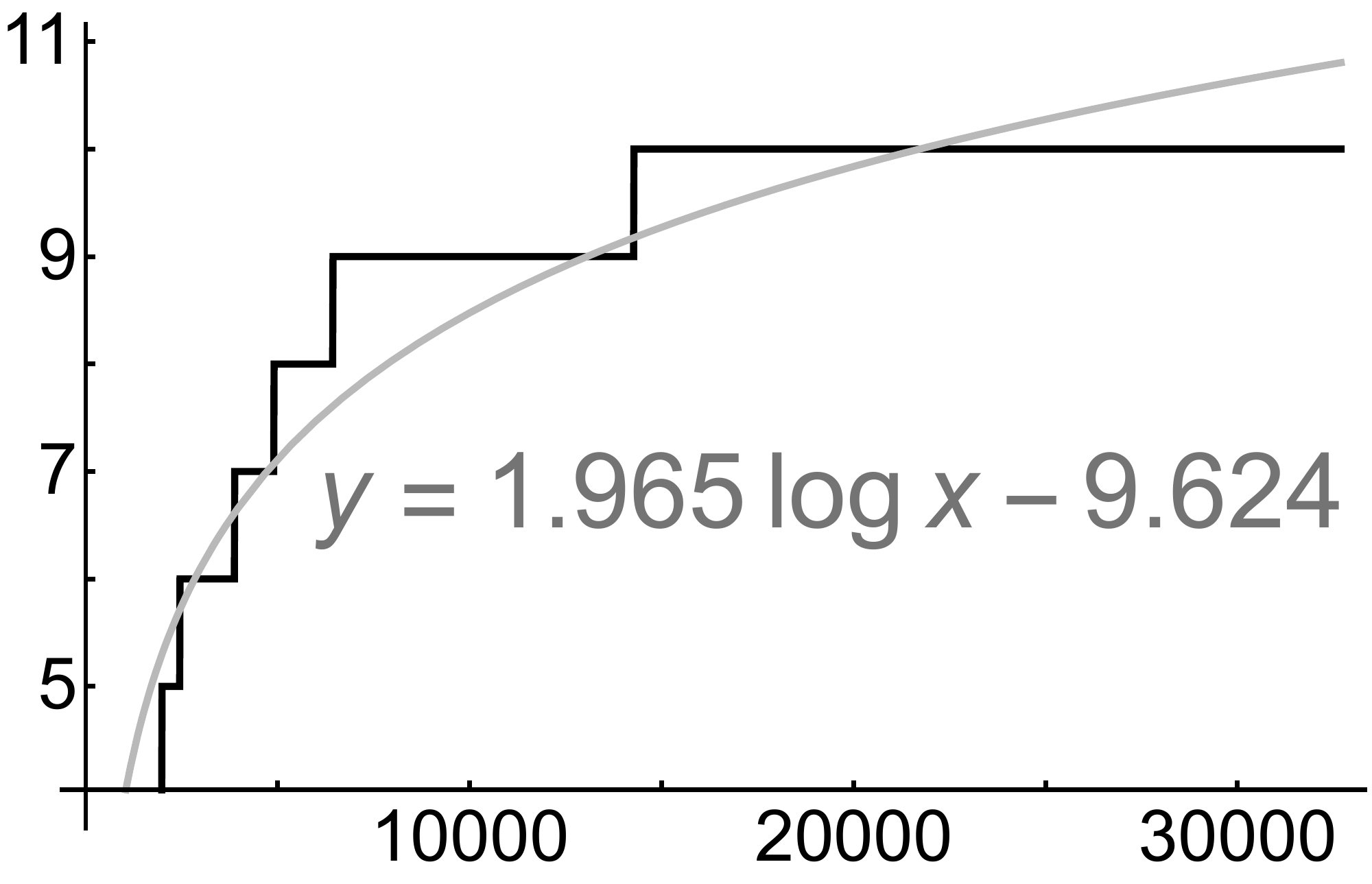}
    \caption*{$\alpha=0.4,\ x\leq 32700$}
  \end{minipage}
\end{figure}
For $\alpha\in \{0.5,0.55,\ldots, 1.0\}$, we compute $\# T_{\leq 10^4}(\alpha)$. As the graphs are similar to the cases of $\alpha\in \{0.3,0.4\}$, we have omitted them here and instead offer Table~\ref{Table1}. To describe the table, for $\alpha\in(0,1]$ and real $x>1$, we define
\begin{align*}
U_{\leq x}(\alpha)&=\Pas{(n_1,n_2,n_3)\in \mathbb{N}^3\colon (\floor{\alpha n_1^2},\floor{\alpha n_2^2},\floor{\alpha n_3^2})\in T_{\leq x}(\alpha)}.
\end{align*}
Note that $\# U_{\leq x}(\alpha) = \# T_{\leq x}(\alpha)$ for all $\alpha \in (0,1]$ and $x>1$.

\begin{table}[htbp]

\begin{center}
\caption{}\label{Table1}

\begin{tabular}{|c|c|l|}

\hline
 
   $\alpha$ &  $\# U_{\leq 10^4}(\alpha) $                                                                &   elements in $ U_{\leq 10^4}(\alpha)$                                                                  \\ \hline\hline
$0.50$     & $15$ & \scriptsize\begin{tabular}{l}
$(3,3,3)$, $(5,5,11)$, $(12,12,71)$, $(17,17,143)$, $(23,29,29)$, $(29,29,419)$,\\
$(70,70,168)$, $(70,70,2449)$, $(99,99,4899)$, $(461,1483,1741)$,\\
$(1219,1559,1847)$, $(1429,1597,1979)$, $(1886,2351,3871)$,\\
$(1887,2943,5247)$, $(2449,3433,6439)$
\end{tabular} \\ \hline
$0.55$     & $12$  & 
\scriptsize \begin{tabular}{l}
$(5,5,11)$, $(12,12,71)$, $(23,29,29)$, $(29,29,419)$, $(70,70,168)$, $(70,70,2449)$,\\ $(99,99,4899)$, $(461,1483,1741)$, $(1219,1559,1847)$, $(1429,1597,1979)$, \\
$(1886,2351,3871)$, $(2449,3433,6439)$
\end{tabular}  \\ \hline
$0.60$     & $9$  & 
\scriptsize
\begin{tabular}{l}
$(23,29,29)$, $(29,29,419)$, $(70,70,2449)$, $(99,99,4899)$, $(461,1483,1741)$,\\ 
$(1219,1559,1847)$, $(1429,1597,1979)$, $(1886,2351,3871)$, $(2449,3433,6439)$
\end{tabular}   \\ \hline
$0.65$     & $9$  & 
\scriptsize
\begin{tabular}{l}
$(23,29,29)$, $(29,29,419)$, $(70,70,2449)$, $(99,99,4899)$, $(461,1483,1741)$,\\ 
$(1219,1559,1847)$, $(1429,1597,1979)$, $(1886,2351,3871)$, $(2449,3433,6439)$
\end{tabular}  \\ \hline
$0.70$     & $9$ &
\scriptsize
\begin{tabular}{l}
$(23,29,29)$, $(29,29,419)$, $(70,70,2449)$, $(99,99,4899)$, $(461,1483,1741)$, \\
$(1219,1559,1847)$, $(1429,1597,1979)$, $(1886,2351,3871)$, $(2449,3433,6439)$

\end{tabular}  \\ \hline
$0.75$     & $15$   & 
\scriptsize
\begin{tabular}{l}
$(3,3,3)$, $(5,5,11)$, $(12,12,71)$, $(17,17,143)$ $(23,29,29)$, $(29,29,419)$,\\
$(70,70,168)$, $(70,70,2449)$, $(99,99,4899)$, $(461,1483,1741)$,\\
$(1219,1559,1847)$, $(1429,1597,1979)$, $(1886,2351,3871)$,\\
$(1887,2943,5247)$, $(2449,3433,6439)$
\end{tabular}
\\
\hline
$0.80$     & $9$ & 
\scriptsize
\begin{tabular}{l}
$(23,29,29)$, $(29,29,419)$, $(70,70,2449)$, $(99,99,4899)$, $(461,1483,1741)$, \\
$(1219,1559,1847)$, $(1429,1597,1979)$, $(1886,2351,3871)$, $(2449,3433,6439)$
\end{tabular}   \\ \hline
$0.85$     & $9$   & 
\scriptsize
\begin{tabular}{l}
$(23,29,29)$, $(29,29,419)$, $(70,70,2449)$, $(99,99,4899)$, $(461,1483,1741)$, \\
$(1219,1559,1847)$, $(1429,1597,1979)$, $(1886,2351,3871)$, $(2449,3433,6439)$

\end{tabular}    \\ \hline
$0.90$     & $9$  &
\scriptsize
\begin{tabular}{l}
$(23,29,29)$, $(29,29,419)$, $(70,70,2449)$, $(99,99,4899)$, $(461,1483,1741)$, \\
$(1219,1559,1847)$, $(1429,1597,1979)$, $(1886,2351,3871)$, $(2449,3433,6439)$

\end{tabular}  \\ \hline
$0.95$     & $9$ & 
\scriptsize
\begin{tabular}{l}
$(23,29,29)$, $(29,29,419)$, $(70,70,2449)$, $(99,99,4899)$, $(461,1483,1741)$, \\
$(1219,1559,1847)$, $(1429,1597,1979)$, $(1886,2351,3871)$, $(2449,3433,6439)$

\end{tabular}   \\ \hline
$1.00$     & $0$ & $\,$ none \\ \hline
\end{tabular}
\end{center}
\end{table}

To produce Figure~\ref{Figure1}, Figure~\ref{Figure2}, and Table~\ref{Table1}, we first applied \texttt{C++} to determine the lists of tuples in $U_{\leq x}(\alpha)$. We then applied \texttt{Mathematica} and checked that these tuples truly belong to $U_{\leq x}(\alpha)$ in order to guarantee the accuracy of the calculations. Figure~\ref{Figure1} and Figure~\ref{Figure2} were plotted by \texttt{Mathematica}; the approximated curves were also computed by \texttt{Mathematica}.

\begin{conjecture}\label{Conjecture-variantPEB}
There exist infinitely many  $(n_1,n_2,n_3)\in \N^3$ with $n_1,n_2,n_3$ $\geq 2$ such that all of $n_1^2+n_2^2-1$, $n_1^2+n_3^2-1$,  $n_2^2+n_3^2-1$, $n_1^2+n_2^2+n_3^2-2$ are perfect squares.
\end{conjecture}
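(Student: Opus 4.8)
The plan is to recast the conjecture as a statement about integral points on an explicit affine threefold and to treat a one-parameter slice of it with the Pell machinery of Section~\ref{Section-Pell} together with the arithmetic of congruent numbers. Since $\floor{n^2/2}=(n^2-1)/2$ for odd $n$, a triple $(n_1,n_2,n_3)$ of odd integers is a solution of the conjecture's system exactly when $(\floor{n_1^2/2},\floor{n_2^2/2},\floor{n_3^2/2})\in T(1/2)$ with all the square roots realising the pairwise sums and the total sum in $S(1/2)$ odd; equivalently, one must exhibit infinitely many integral points with $n_i\geq 2$ on
\[
n_1^2+n_2^2-1=y_3^2,\quad n_1^2+n_3^2-1=y_2^2,\quad n_2^2+n_3^2-1=y_1^2,\quad n_1^2+n_2^2+n_3^2-2=w^2 .
\]
(At least two of the $n_i$ are forced to be odd, because $u^2+v^2-1\equiv 3\pmod 4$ whenever $u,v$ are both even, so I will look for solutions with $n_1,n_2,n_3$ all odd.) The constructions behind Theorems~\ref{Theorem-lowerT-Q} and \ref{Theorem-lowerT-almostall} feed $T(1/2)$ only through the \emph{even} Pell numbers $P_{4n}$, so some new input is needed.

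I would first restrict to the diagonal slice $n_2=n_3=:n$, where the system becomes $2n^2-1=a^2$, $n_1^2+n^2-1=b^2$, $n_1^2+2n^2-2=d^2$. The first equation is the Pell equation $a^2-2n^2=-1$, so by Lemma~\ref{Lemma-PellEquation} its solutions are precisely $n=P_{2k+1}$ (automatically odd) with $a=P_{2k+2}-P_{2k+1}$. Eliminating $a$, the other two equations say exactly that $n_1^2,\,b^2,\,d^2$ is a three-term arithmetic progression of squares with common difference $n^2-1$; setting $p=(d+n_1)/2$ and $q=(d-n_1)/2$, this amounts to finding a Pythagorean triple $p^2+q^2=b^2$ with $pq=(n^2-1)/2$ and $p-q=n_1\geq 2$. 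The ``trivial'' factorisation $n_1=1$ always occurs (it corresponds to the integral right triangle $(a-1,a+1,2n)$ of area $n^2-1$), so $n^2-1=P_{2k+1}^2-1$ is always a congruent number, the curve $E_k\colon Y^2=X^3-(n^2-1)^2X$ has positive rank, and consequently there are, for every $k$, infinitely many \emph{rational} three-square progressions with common difference $n^2-1$. What one actually needs is a \emph{second}, integral factorisation with $n_1\geq 2$ for infinitely many $k$ --- as happens for $n=29$, where $(p,q)=(35,12)$ yields the solution $(23,29,29)$ of Table~\ref{Table1}.

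I expect the main obstacle to be precisely the passage from rational to integral progressions: the shifts $-1,-2$ are not scale-invariant, so denominators cannot be cleared, and one is reduced to controlling integral points in the family $\{E_k\}_{k\geq 1}$ (equivalently, integral solutions of a family of Pell-type surfaces) uniformly in $k$, for which no general technique is available. A complementary route --- which I would in fact attempt first, computationally --- is to give up the diagonal slice and search for an honest rational parametrisation of the full threefold, seeded by its known non-diagonal solutions such as $(461,1483,1741)$ in Table~\ref{Table1}: the diagonal slice provably carries \emph{no} non-constant polynomial family, since $a^2=2n^2-1$ already forces $n$ to be constant over $\mathbb{Q}[t]$, but the non-diagonal locus is not subject to this obstruction, and a single explicit identity there would settle Conjecture~\ref{Conjecture-variantPEB} outright.
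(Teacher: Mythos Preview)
The statement you are attempting to prove is \emph{Conjecture}~\ref{Conjecture-variantPEB} in the paper, and the paper does \emph{not} provide a proof. After stating the conjecture the authors only offer heuristic motivation (letting $\alpha\to 1^-$ in Theorem~\ref{Theorem-lowerT-almostall}) and numerical evidence (the computation of $V(46300)$). There is therefore no ``paper's own proof'' to compare your proposal against.

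Your write-up is not a proof either, and you say so yourself: the reduction along the diagonal slice $n_2=n_3=n$ is carried out correctly (the Pell equation $a^2-2n^2=-1$ forces $n=P_{2k+1}$, and the remaining two equations are equivalent to a three-term arithmetic progression of squares $n_1^2,b^2,d^2$ with common difference $n^2-1$, i.e.\ to an integral Pythagorean triple $(p,q,b)$ with $pq=(n^2-1)/2$), but you then stop at precisely the point where the difficulty lies. You observe that the ``trivial'' factorisation gives $n_1=1$, which is excluded, and that the congruent-number machinery only supplies \emph{rational} points on $E_k\colon Y^2=X^3-(n^2-1)^2X$; passing to a second \emph{integral} factorisation with $n_1\ge 2$ for infinitely many $k$ is left as an unaddressed problem. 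Likewise, the alternative route of searching for a polynomial parametrisation of the non-diagonal locus is a suggestion, not an argument.

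In short: there is a genuine gap, and it is the one you yourself flag --- the step from rational to integral points in the family $\{E_k\}$ (or, alternatively, the existence of a parametric family off the diagonal) is the entire content of the conjecture, and neither you nor the paper supplies it. Your outline is a reasonable research plan, but it does not constitute a proof, and since the paper treats the statement as open, that is the expected state of affairs.
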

It follows from Theorem~\ref{Theorem-lowerT-almostall} that for almost all $\alpha\in (0,1)$ there exist infinitely many tuples  $(n_1,n_2,n_3,m_1,m_2,m_3,m_4)\in \mathbb{N}^4$ such that
\begin{gather*}
    \floor{\alpha n_1^2} + \floor{\alpha n_2^2}=\floor{\alpha m_1^2}, \quad    \floor{\alpha n_2^2} + \floor{\alpha n_3^2}=\floor{\alpha m_2^2}, \\
    \floor{\alpha n_3^2} + \floor{\alpha n_1^2}=\floor{\alpha m_3^2}, \quad \floor{\alpha n_1^2} + \floor{\alpha n_2^2}+\floor{\alpha n_3^2}=\floor{\alpha m_4^2}.
\end{gather*}
Since $\lim_{\alpha\to1^-}\floor{\alpha n^2}=n^2-1$, we anticipate that there exist infinitely many tuples $(n_1,n_2,n_3,m_1,m_2,m_3,m_4)\in \mathbb{N}^4$ such that
\begin{gather*}
    n_1^2 +  n_2^2-1=m_1^2, \quad    n_2^2 + n_3^2-1=m_2^2, \\
    n_3^2 +  n_1^2-1=m_3^2, \quad n_1^2 + n_2^2+n_3^2-2=m_4^2.
\end{gather*}
Therefore, Conjecture~\ref{Conjecture-variantPEB} is expected from Theorem~\ref{Theorem-lowerT-almostall}. Let $V(x)$ be the set of all tuples $(n_1,n_2,n_3)\in \mathbb{N}^3$ with $n_1\leq n_2\leq n_3\leq x$ such that all of
\[
n_1^2+n_2^2-1,\quad n_1^2+n_3^2-1,\quad  n_2^2+n_3^2-1,\quad n_1^2+n_2^2+n_3^2-2
\]
are squares. Although it is believed that there is no perfect Euler brick, this conjecture suggests that there exists an infinite number of variants of perfect Euler bricks. By numerical calculation, we have 
\begin{align*}
&V(46300)\\
=&\{(23,29,29),(461,1483,1741),(1219,1559,1847),(1429,1597,1979), \\
&(1886,2351,3871),(2449,3433,6439),(3095,18880,20351) \}.
\end{align*}
We do not know the reason, but it can be seen that  $V(46300)\subset U_{46300}(0.5)$.

\appendix
\section{Ceiling Functions}
In this appendix, we examine the set 
\[
\upperS(\alpha) :=\{\lceil \alpha n^2 \rceil \colon n\in \mathbb{N}\}
\]
for every $\alpha\in \mathbb{R}$. Let $\upperT(\alpha)=T(\upperS(\alpha))$. Note that $\upperS(1)=S$ and that for every fixed $n\in \mathbb{N}$, $\lim_{\alpha\to 1^-} \lceil  \alpha n^2  \rceil= n^2$. Since $\upperS(\alpha)$ is a discrete set, there is no gap between $\lceil  \alpha n^2  \rceil$ and $n^2$ if $n$ is sufficiently large. We expect that we would  obtain Theorem~\ref{Theorem-lowerT-Q} and  Theorem~\ref{Theorem-lowerT-almostall} for $\upperS(\alpha)$ in a similar manner to the proofs of these theorems. However, the problems involving $\upperS(\alpha)$ are much harder. The small difference between $S(\alpha)$ and $\upperS(\alpha)$ causes strange phenomena.
\begin{theorem}\label{Theorem-upperT-Q}
We have $\#\upperT(p/P_q)=\infty$ for all odd $q\in\N$ and $p\in [0,(4/9)P_q)_{\Z}$.
\end{theorem}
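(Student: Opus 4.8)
The plan is to imitate the proof of Theorem~\ref{Theorem-lowerT-Q}, with the floor function replaced by the ceiling function and, crucially, with the ordinary Pell equation replaced by the \emph{negative} Pell equation — this is exactly where the hypothesis ``$q$ odd'' is used. Write $\alpha=p/P_q$; one may assume $p\ge 1$, the case $p=0$ being degenerate. For each $j\ge 1$ set $q_j:=(2j-1)q$, which is odd and divisible by $q$, and put $Y_j:=P_{q_j}$ and $G_j:=P_{q_j+1}-P_{q_j}$. By Lemma~\ref{Lemma-Pell}\eqref{Property-Pell1} we have $P_q\mid Y_j$, so $\alpha Y_j^2=pY_j^2/P_q\in\Z$; and by Lemma~\ref{Lemma-PellEquation}, since $q_j$ is odd, $G_j^2=2Y_j^2-1$. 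Consequently $\ceil{\alpha G_j^2}=\ceil{2\alpha Y_j^2-\alpha}=2\alpha Y_j^2=2\ceil{\alpha Y_j^2}$, using $0<\alpha<1$ together with $\alpha Y_j^2\in\Z$; this is the ceiling analogue of \eqref{Equation-PS1}.

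Having secured this, I would set $k=\ell:=\ceil{\alpha Y_j^2}$ and try to choose an integer $z=z_j$ with $P_q\mid z$ and $z\ge Y_j$ for which both $z^2+Y_j^2$ and $z^2+G_j^2$ are perfect squares (more precisely, lie within a bounded distance, absorbed by the ceiling, of perfect squares); then put $m:=\ceil{\alpha z^2}=pz^2/P_q$, so that $k=\ell\le m$. I would then check that all seven quantities lie in $\upperS(\alpha)$: $k,\ell$ are the $\upperS(\alpha)$-images of $Y_j$; $m$ is the image of $z$; $k+\ell=2\ceil{\alpha Y_j^2}=\ceil{\alpha G_j^2}$ is the image of $G_j$; $\ell+m=m+k=\alpha(Y_j^2+z^2)$ is the image of $\sqrt{Y_j^2+z^2}$; and $k+\ell+m=\alpha(2Y_j^2+z^2)=\alpha(G_j^2+z^2)+\alpha$ equals $\ceil{\alpha(G_j^2+z^2)}$ — again by the ``$0<\alpha<1$'' trick, using $2Y_j^2=G_j^2+1$ — hence is the image of $\sqrt{G_j^2+z^2}$. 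Therefore $(k,k,m)\in\upperT(\alpha)$. Since $Y_j=P_{q_j}\to\infty$, the value $k=\ceil{\alpha Y_j^2}=pP_{q_j}^2/P_q$ runs through infinitely many values as $j$ varies, so the triples are pairwise distinct and $\#\upperT(\alpha)=\infty$.

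The main obstacle is the construction of the third coordinate, i.e.\ finding $z_j$. In the floor case the ``hub'' $y^2/2$ together with the shifts $j\in\{-1,0,1\}$ produces the length-$3$ arithmetic progression of $S(\alpha)$ with difference $\floor{\alpha y^2}$ for free; this device is unavailable here, because three squares in arithmetic progression whose common difference is a perfect square $P_{q_j}^2$ are impossible modulo $4$ (the middle and outer squares would force $(c-a)(c+a)=2P_{q_j}^2\equiv 2\pmod 4$, which cannot be a difference of two squares). Hence the rounding in $\ceil{\cdot}$ must genuinely be exploited, which is why the routing through the identity $2P_{q_j}^2=G_{q_j}^2+1$ is forced. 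Controlling the resulting error terms — a common leg $z$ of a Pythagorean triple with $Y_j$ and of one with $G_j$, up to a bounded ceiling tolerance, while keeping $P_q\mid z$ — is where the assumption $p<(4/9)P_q$, i.e.\ $\alpha<4/9$, enters: it makes $1/\alpha$ large enough for the admissible shifts (of size governed by $9/4$) to accommodate such a $z_j$. This is also the reason the companion regime $\alpha\in[2/3,1)$ mentioned in the introduction remains out of reach.
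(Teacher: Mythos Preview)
Your doubling step via the negative Pell relation is fine, but the proof has a real gap: the third coordinate $m$ is never constructed. The proposed $z_j$ --- a common Pythagorean leg for both $Y_j$ and $G_j$, divisible by $P_q$ --- is neither produced nor shown to exist, and this is not a routine matter. Your mod-$4$ argument for why the hub $(y^2/2+j)^2$ fails is also off target: those three numbers are not in arithmetic progression with common difference $y^2$ (the gaps are $y^2\pm1$, and the whole point is that rounding absorbs the $\pm1$). The genuine obstacle in \emph{your} setup is simpler: by taking $q_j$ odd you force $Y_j=P_{q_j}$ to be odd, so $Y_j^2/2\notin\Z$ and there is no integer hub to centre on.

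The paper circumvents this not with the negative Pell equation but with the \emph{positive} one, at the price of giving up direct divisibility $P_q\mid y$. For odd $n$ it sets $y=P_{qn-1}$ and $x=P_{qn}-P_{qn-1}$; since $qn-1$ is even one has $x^2=2y^2+1$ and $y$ is even, so $y^2/2\in\Z$. Divisibility is recovered one step away via the Cassini-type identity $y^2=P_{qn-1}^2=P_{qn}P_{qn-2}-1$, which together with $P_q\mid P_{qn}$ gives $P_q\mid y^2+1$. Hence $\lceil\alpha y^2\rceil=\alpha(y^2+1)$ and $\lceil\alpha x^2\rceil=2\alpha(y^2+1)=2\lceil\alpha y^2\rceil$. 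The hub device then works exactly as in Theorem~\ref{Theorem-lowerT-Q}: one checks $\lceil\alpha(y^2/2+1)^2\rceil=\lceil\alpha(y^2/2)^2\rceil+\lceil\alpha y^2\rceil$ directly, and, using $\{\alpha y^2\}=1-p/P_q$ together with $\{\alpha(y^2/2-1)^2\}=9p/(4P_q)$, also $\lceil\alpha(y^2/2-1)^2\rceil=\lceil\alpha(y^2/2)^2\rceil-\lceil\alpha y^2\rceil$. The hypothesis $p<(4/9)P_q$ is precisely what forces $9p/(4P_q)<1$; that is where the constant $4/9$ actually enters, not from any tolerance in a Pythagorean-leg construction.
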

\begin{theorem}\label{Theorem-upperT-QQ}
We have $\#\upperT(\alpha)=\infty$ for all rational numbers 
\[
\alpha\in (1/8,3/16]\cup(1/3,3/8]\cup(5/9,9/16].
\]
\end{theorem}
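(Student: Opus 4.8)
The plan is to run, for each rational $\alpha$ lying in one of the three intervals, a direct Pell–number construction modelled on the proofs of Theorem~\ref{Theorem-lowerT-Q} and Theorem~\ref{Theorem-upperT-Q} but adapted to the ceiling function; all three intervals will be handled by the same scheme with slightly different bookkeeping. Write $\alpha=p/q$ in lowest terms. As in Section~\ref{Section-ProofTheoremCorollary}, the engine is the identity $(P_{N+1}-P_N)^2=2P_N^2+(-1)^N$ of Lemma~\ref{Lemma-PellEquation}: taking $N$ \emph{odd}, $y=P_N$ and $x=P_{N+1}-P_N$ gives $x^2=2y^2-1$, hence $\alpha x^2=2\alpha y^2-\alpha$, and therefore
\[
\lceil\alpha x^2\rceil=2\lceil\alpha y^2\rceil
\]
provided $2\{\alpha P_N^2\}-\alpha\in(1,2]$, i.e. provided $\{\alpha P_N^2\}$ is \emph{close to $1$}, namely $\{\alpha P_N^2\}>(1+\alpha)/2$. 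This is the ceiling analogue of \eqref{Equation-PS1}; note the contrast with the floor case, where Lemma~\ref{Lemma-divisibility} was used to make $\alpha P_N^2$ an exact integer — here that would give $\{\alpha P_N^2\}=0$ and the identity above would instead fail by $1$, which is the source of the ``strange phenomena'' alluded to in the text.

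For the arithmetic–progression ingredient I would pick three integers $M,M+1,M+2$ with $2M+1=P_N^2$ (an integer, since $P_N$ is odd for odd $N$), so that
\[
\alpha(M+1)^2=\alpha M^2+\alpha P_N^2,\qquad \alpha(M+2)^2=\alpha M^2+2\alpha P_N^2+2\alpha .
\]
A short computation shows that once the explicit inequalities
\[
\{\alpha M^2\}+\{\alpha P_N^2\}>1\quad\text{and}\quad 2<\{\alpha M^2\}+2\{\alpha P_N^2\}+2\alpha\le 3
\]
hold, one gets $\lceil\alpha(M+j)^2\rceil=\lceil\alpha M^2\rceil+j\,\lceil\alpha P_N^2\rceil$ for $j\in\{0,1,2\}$. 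Setting $k=\ell=\lceil\alpha P_N^2\rceil$ and $m=\lceil\alpha M^2\rceil$ then places all of $k,\ell,m,k+\ell,\ell+m,m+k,k+\ell+m$ in $\upperS(\alpha)$ — $k+\ell=\lceil\alpha x^2\rceil$ by the first display, and $m+k=\lceil\alpha(M+1)^2\rceil$, $k+\ell+m=\lceil\alpha(M+2)^2\rceil$ by the progression identities — exactly as in the proof of Theorem~\ref{Theorem-lowerT-Q}. Since $m\to\infty$ as $N\to\infty$, letting $N$ run through an infinite residue class of odd indices produces infinitely many distinct tuples, whence $\#\upperT(\alpha)=\infty$. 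Imposing the three displayed conditions together with $0<\alpha<1$ is where the three intervals come from: each becomes a system of linear inequalities in $\alpha$, $\{\alpha P_N^2\}$, $\{\alpha M^2\}$ whose consistency forces $\alpha\le 3/16$ (resp. $3/8$, $9/16$) on the right and $\alpha>1/8$ (resp. $1/3$, $5/9$) on the left, the difference between the cases being which offsets and which branch $(-1)^N$ are used and how the slack term $2\alpha$ is absorbed.

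The main obstacle is not this algebra but the final step: proving that for \emph{every} rational $\alpha=p/q$ in $(1/8,3/16]\cup(1/3,3/8]\cup(5/9,9/16]$ there really is a residue class of odd $N$ on which the needed inequalities on $\{\alpha P_N^2\}$ and on $\{\alpha(P_N^2-1)^2/4\}$ all hold simultaneously. Since $\{\alpha P_N^2\}$ takes only the finitely many values $(ps\bmod q)/q$, with $s$ ranging over the squares of the residues occurring in the Pell sequence modulo $q$ (and similarly modulo $4q$ for $\{\alpha M^2\}$), this is a congruence problem about Pell numbers: one must show that the residues $P_N\bmod q$ with $N$ odd always include one whose square, scaled by $p$, lands in the required window just below $q$, and whose associated $M$ has $\alpha M^2$ in the required window. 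I expect this to be the technical heart of the proof, and the reason the statement is confined to these particular intervals rather than to all rational $\alpha\in(0,4/9)$: for the floor function the analogous windows are wide enough (and Lemma~\ref{Lemma-divisibility} is available) that every rational works, whereas the ceiling function leaves so little slack that the windows admit a solution only for $\alpha$ in this union of short intervals, the endpoints $3/16,3/8,9/16$ being exactly where one of the ceiling inequalities degenerates into an equality.
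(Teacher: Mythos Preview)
Your general architecture --- a Pell identity to secure $\lceil\alpha x^2\rceil=2\lceil\alpha y^2\rceil$, plus a length-$3$ progression $M,M+1,M+2$ with step $\lceil\alpha y^2\rceil$ --- is the right shape, and the fractional-part inequalities you derive are correct. But the step you yourself flag as the ``technical heart'' --- locating, for each rational $\alpha=p/q$, a residue class of odd $N$ on which $\{\alpha P_N^2\}$ and $\{\alpha M^2\}$ simultaneously land in the required narrow windows --- is left unproved, and your sketch supplies no mechanism for it. As stated this looks genuinely hard: the residues of $P_N^2\bmod q$ can be sparse, and nothing forces one of them into the window $((1+\alpha)/2,1)$.

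The paper bypasses this difficulty by a device you explicitly discard: it \emph{does} use Lemma~\ref{Lemma-divisibility}, but with a shift of index. Take $r=r(q)$ with $q\mid P_r$, pick $n$ with $4\mid n$, and set $y=P_{rn-2}$ (an \emph{even} index, so $x^2=2y^2+1$, not $2y^2-1$). Then $y=P_{rn}-2P_{rn-1}$ gives
\[
y^2\equiv 4P_{rn-1}^2=4(P_{rn}P_{rn-2}+1)\equiv 4\pmod{4q},
\]
so $\{\alpha y^2\}=\{4\alpha\}$ is a \emph{fixed} number depending only on $\alpha$, independent of $n$; likewise $\{\alpha(y^2/2)^2\}$, $\{\alpha(y^2/2+1)^2\}$, $\{\alpha(y^2/2+2)^2\}$, $\{\alpha x^2\}$ reduce to $\{4\alpha\},\{9\alpha\},\{16\alpha\},\{9\alpha\}$. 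The three ceiling identities then collapse to the pair of equations
\[
1-\alpha+\delta(9\alpha)-2\delta(4\alpha)=0,\qquad 1-3\alpha+\delta(16\alpha)-\delta(9\alpha)-\delta(4\alpha)=0,
\]
which one checks directly hold exactly on $(1/8,3/16]\cup(1/3,3/8]\cup(5/9,9/16]$. So the intervals arise not from a residue search over Pell numbers but from two piecewise-linear equations in $\alpha$ alone; the point you missed is that shifting the index by $2$ makes the divisibility trick force $\{\alpha y^2\}$ to the specific value $\{4\alpha\}$ rather than to $0$, and the intervals are precisely where that specific value makes the ceiling bookkeeping close up.
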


Moreover, we obtain the following theorem as a metric result.

\begin{theorem}\label{Theorem-upper-Lebesguemain1}
 We have $\#\overline{T}(\alpha)=\infty$ for almost all $\alpha \in (0,2/3)$. 
\end{theorem}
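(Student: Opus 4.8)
The plan is to mimic the proof of Theorem~\ref{Theorem-lowerT-almostall}, replacing the floor function by the ceiling function and tracking the sign changes this forces. Fix $0<s<t<2/3$; it suffices to prove the statement for almost all $\alpha\in(s,t)$, and then take a countable union over a sequence of such intervals exhausting $(0,2/3)$. As before, the arithmetic backbone is the Pell identity $x_n^2 - 2y_n^2 = 1$ with $x_n = P_{2n+1}-P_{2n}$, $y_n = P_{2n}$ (Lemma~\ref{Lemma-PellEquation}), so that $\alpha x_n^2 = 2\alpha y_n^2 + \alpha$. The key elementary observation is that for real $u$ and a positive integer constant, $\lceil 2u + \alpha\rceil = 2\lceil u\rceil$ holds precisely when $-\{u\}\cdot 2 \le \alpha - \text{(something)}$; more carefully, writing $\alpha y_n^2 = \lceil \alpha y_n^2\rceil - \theta_n$ with $\theta_n = \{-\alpha y_n^2\} \in [0,1)$, one gets $\alpha x_n^2 = 2\lceil\alpha y_n^2\rceil - 2\theta_n + \alpha$, so $\lceil \alpha x_n^2\rceil = 2\lceil\alpha y_n^2\rceil$ exactly when $0 < 2\theta_n - \alpha \le 1$, i.e.\ when $\theta_n$ lies in a window of length $1/2$ depending on $\alpha$. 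The constraint $\alpha < 2/3$ (rather than $<1$) is what gives enough room: we will need $\theta_n$ confined to an interval of length roughly $\alpha/2 < 1/3$, together with analogous constraints coming from the three-term progression, and the total budget of fractional-part constraints must stay below $1$.

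Next I would handle the "arithmetic progression" part exactly as in Section~\ref{Section-UniformDistribution}. For $j \in \{-1,0,1\}$ write
\[
\alpha (y_n^2/2 + j)^2 = \alpha y_n^4/4 + j\,\alpha y_n^2 + \alpha j^2
= \lceil \alpha y_n^4/4\rceil + j\lceil \alpha y_n^2\rceil + \delta(j),
\]
where now $\delta(j) = -\{-\alpha y_n^4/4\} - j\{-\alpha y_n^2\} + \alpha j^2$ (the signs flip because $\lceil \cdot\rceil$ rounds up). One wants $\lceil \alpha(y_n^2/2+j)^2\rceil = \lceil\alpha y_n^4/4\rceil + j\lceil\alpha y_n^2\rceil$, which holds when $-1 < \delta(j) \le 0$ for all three values of $j$. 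This is again a positive-measure box condition on the pair $(\{-\alpha P_{2n}^4/4\}, \{-\alpha P_{2n}^2\}) = (\{\alpha (-P_{2n}^4/4)\}, \{\alpha(-P_{2n}^2)\})$, and crucially it must be made compatible with the window for $\theta_n$ from the previous paragraph; choosing the target box for $(\{-\alpha P_{2n}^4/4\},\{-\alpha P_{2n}^2\})$ of the form $[0, c_1(t)] \times [a(t), a(t)+c_2(t)]$ with $c_1, c_2$ chosen so that every inequality above is satisfied for all $\alpha \in (s,t)$, and checking $c_1, c_2 > 0$ and that the box fits inside $[0,1)^2$ — this is where the numerical value $2/3$ enters and where one must be careful about whether $s$ is small, exactly as the quantity $a = \max(0,(1-t)/2-s)$ appears in Theorem~\ref{Theorem-lowerT-almostall}. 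Having fixed such a box $X = X(s,t)$ with $\mu_2(X) > 0$, set
\[
A = \{ n \in \mathbb{N} : (\{-\alpha P_{2n}^4/4\}, \{-\alpha P_{2n}^2\}) \in X \},
\]
and every $n \in A$ yields $(\lceil\alpha y_n^2\rceil, \lceil\alpha y_n^2\rceil, \lceil\alpha (y_n^2/2-1)^2\rceil) \in \overline{T}(\alpha)$, so $\overline{T}(\alpha)$ is infinite provided $A$ is.

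To see $A$ is infinite for almost all $\alpha\in(s,t)$, I would reuse Lemma~\ref{Lemma-uniformly}: the sequence $((\alpha P_{2n}^4/4, \alpha P_{2n}^2))_{n}$ is uniformly distributed modulo $1$ for almost all $\alpha\in(s,t)$, hence so is $((-\alpha P_{2n}^4/4, -\alpha P_{2n}^2))_n$ (negation preserves uniform distribution mod $1$), and by the Jordan-measurable version of uniform distribution (property~\eqref{Property-UDM-3}) the density of $A$ equals $\mu_2(X) > 0$. In particular $A$ is infinite, which completes the proof. The main obstacle is purely bookkeeping: one must choose the box $X$ and verify the chain of ceiling-function inequalities $0 < 2\theta_n - \alpha \le 1$ and $-1 < \delta(j) \le 0$ ($j = -1,0,1$) are \emph{simultaneously} enforceable by a single positive-measure box, uniformly over $\alpha\in(s,t)$ — and it is precisely the accounting of the lengths of these windows that caps the range at $\alpha<2/3$ rather than $\alpha<1$, explaining why the authors cannot reach $[2/3,1)$. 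No new analytic input beyond Lemma~\ref{Lemma-uniformly} and Weyl-type equidistribution is needed; the difficulty is combinatorial-geometric, in the constants.
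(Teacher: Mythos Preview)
Your overall strategy is exactly the paper's: use the Pell identity, set up fractional-part constraints for the doubling relation and the three-term progression, and then invoke Lemma~\ref{Lemma-uniformly} to get positive density for almost all $\alpha$. The change of variables $\theta=\{-\alpha y_n^2\}$, $\phi=\{-\alpha y_n^4/4\}$ is equivalent to the paper's use of $u=\{\alpha P_{2n}^2\}$, $v=\{\alpha P_{2n}^4/4\}$ via $u=1-\theta$, $v=1-\phi$, and your conditions translate precisely into the paper's regions $B_1,B_2,B_3$ in Lemma~\ref{Lemma-range}.

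There are two concrete problems, though. First, your proposed target set ``$[0,c_1(t)]\times[a(t),a(t)+c_2(t)]$'' cannot work: your own $j=-1$ condition forces $\phi\ge\theta+\alpha$, and since the doubling condition forces $\theta>\alpha/2$, you need $\phi>3\alpha/2>0$; the first coordinate cannot start at $0$. More importantly, the constraints from $j=\pm1$ are \emph{diagonal} ($\phi+\theta$ and $\phi-\theta$ lie in prescribed intervals), so the admissible region is a rotated square intersected with a vertical strip, not an axis-aligned rectangle. Second, and this is the real content of the theorem, you gesture at ``this is where $2/3$ enters'' but do not carry out the verification that the region is non-empty. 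This is not just bookkeeping: the paper has to compute the vertices of $B_2\cap B_3$ explicitly, project onto the first coordinate, and split into the cases $t\le 1/2$ and $t>1/2$ (choosing $s$ differently in each) to see that the intersection with $B_1\cap(0,1)^2$ survives. The genuine source of the bound is the chain $\theta>t/2$ (doubling) together with $\phi\ge\theta+t$ and $\phi<1$ (from $j=-1$), which forces $t/2+t<1$, i.e.\ $t<2/3$; your heuristic ``interval of length roughly $\alpha/2<1/3$'' misidentifies the mechanism (the doubling window has length $1/2$, not $\alpha/2$).
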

As a consequence, we get the following corollary.

\begin{corollary}\label{Corollary-upperT-positive}
There exists an open and dense set $U\subseteq (0,2/3)$ such that $\#\overline{T}(\alpha)>0$ for all $\alpha\in U$.
\end{corollary}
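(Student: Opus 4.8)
The plan is to deduce Corollary~\ref{Corollary-upperT-positive} from Theorem~\ref{Theorem-upper-Lebesguemain1} in exactly the same way that Corollary~\ref{Corollary-lowerT-positive} was deduced from Theorem~\ref{Theorem-lowerT-Q}, namely by exploiting a one-sided stability property of the ceiling function under small perturbations of~$\alpha$. First I would record the analogue of Lemma~\ref{Lemma-IntegerParts} for the ceiling function: if $\alpha>0$ and $W>0$, then setting $\delta=\{-\alpha W\}/W$ (so that $\alpha W+\delta W$ is the next integer above $\alpha W$, or $\delta=0$ if $\alpha W\in\Z$), one has $\lceil \beta W\rceil=\lceil\alpha W\rceil$ for every $\beta\in[\alpha,\alpha+\delta)$. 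In other words, slightly increasing $\alpha$ does not change any given value $\lceil\alpha W\rceil$ until a threshold is crossed. Crucially, the set $\overline{T}(\alpha)$ is defined through finitely many constraints of the form ``$\lceil\alpha n_i^2\rceil$ equals a prescribed value'' (the seven terms in \eqref{terms-klm}), so if $(k,\ell,m)\in\overline{T}(\alpha)$ is realized by $\lceil\alpha n_1^2\rceil,\ldots$ with some finite collection of witnesses $n_1,\ldots,n_7$, then taking $\delta$ to be the minimum of the corresponding thresholds over this finite set of witnesses gives an interval $[\alpha,\alpha+\delta)$ on which $(k,\ell,m)$ remains in $\overline{T}$, hence $\#\overline{T}(\gamma)>0$ there.

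Next I would set $A=\{\alpha\in(0,2/3)\colon \#\overline{T}(\alpha)=\infty\}$, which by Theorem~\ref{Theorem-upper-Lebesguemain1} is co-null in $(0,2/3)$, and in particular dense. For each $\alpha\in A$ pick one tuple in $\overline{T}(\alpha)$, pick witnesses for it, and let $\epsilon(\alpha)>0$ be the resulting threshold from the previous paragraph, so that $\#\overline{T}(\gamma)>0$ for all $\gamma\in(\alpha,\alpha+\epsilon(\alpha))$. Then define
\[
U=\left(\bigcup_{\alpha\in A}(\alpha,\alpha+\epsilon(\alpha))\right)\cap(0,2/3).
\]
This $U$ is open by construction. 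It is dense in $(0,2/3)$ because $A$ is dense: given any nonempty open subinterval $(c,d)\subseteq(0,2/3)$, density of $A$ provides $\alpha\in A\cap(c,d)$, and then the nonempty interval $(\alpha,\min(\alpha+\epsilon(\alpha),d))$ lies in $U\cap(c,d)$. Finally, every $\gamma\in U$ satisfies $\gamma\in(\alpha,\alpha+\epsilon(\alpha))$ for some $\alpha\in A$, hence $\#\overline{T}(\gamma)>0$, which is the assertion of the corollary.

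I do not expect a genuine obstacle here; the only point requiring mild care is the one-sided stability lemma for $\lceil\cdot\rceil$, because unlike the floor case the perturbation must be taken in the increasing direction of $\alpha$ and the threshold $\delta$ may be $0$ when $\alpha W$ is already an integer — but such $\alpha$ form a countable set and can simply be discarded from $A$ without affecting its co-null (hence dense) character, or alternatively one notes that the finitely many constraints defining membership in $\overline{T}(\alpha)$ are all of the form $\lceil\alpha n_i^2\rceil = c_i$ and one can instead perturb to a nearby $\alpha'$ with all $\alpha' n_i^2\notin\Z$ before applying the lemma. Either way the argument is a routine transcription of the proof of Corollary~\ref{Corollary-lowerT-positive}, with the floor replaced by the ceiling and the ambient interval $(0,1)$ replaced by $(0,2/3)$.
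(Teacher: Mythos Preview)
Your argument is correct and follows the same template as the paper's: pick the co-null set $A=\{\alpha\in(0,2/3):\#\overline T(\alpha)=\infty\}$ from Theorem~\ref{Theorem-upper-Lebesguemain1}, use a one-sided stability of the rounding to get a small interval on which $\#\overline T>0$, and take the union. The one difference is the direction of perturbation. You perturb \emph{upward} and are then forced to handle the edge case $\alpha W\in\Z$, where your threshold $\delta$ collapses to $0$; you patch this by discarding a countable set, which is legitimate. The paper instead perturbs \emph{downward}: since $\lceil x\rceil$ is constant on intervals of the form $(n-1,n]$, for every $\alpha>0$ and $W>0$ there is always a positive $\delta$ with $\lceil\beta W\rceil=\lceil\alpha W\rceil$ for all $\beta\in(\alpha-\delta,\alpha]$, with no exceptional set at all. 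So your remark that ``the perturbation must be taken in the increasing direction'' is the opposite of the truth: the clean analogue of Lemma~\ref{Lemma-IntegerParts} for the ceiling goes to the left, and that is exactly what the paper does, writing $U=\bigcup_{\alpha\in A}(\alpha-\epsilon(\alpha),\alpha)\cap(0,2/3)$. Both routes work; the paper's is slightly tidier.
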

The proof of Corollary~\ref{Corollary-upperT-positive} is similar to that of Corollary~\ref{Corollary-lowerT-positive}. For all $\alpha\in (0,2/3)$ with $\#\upperT(\alpha)=\infty$, there exists $\epsilon(\alpha)>0$ such that any $\gamma \in (\alpha-\epsilon(\alpha),\alpha)$ satisfies  $\#\upperT(\gamma)>0$. Let
\[
U=\bigcup_{\substack {\alpha\in (0,2/3) \\ \#\upperT(\alpha)=\infty}  } (\alpha-\epsilon(\alpha),\alpha)   \cap (0,2/3).
\]
By Theorem~\ref{Theorem-upper-Lebesguemain1}, $U$ is open and dense in $(0,2/3)$. Therefore, we deduce Corollary~\ref{Corollary-upperT-positive}.

\begin{proof}[Proof of Theorem~\ref{Theorem-upperT-Q}]
Let $q$ be an arbitrary odd positive integer and $p\in [1,(4/9)P_q)_\Z$. Let us show that for every odd $n\in \mathbb{N}$,
		\begin{equation}\label{Relation-upperT}
		\pas{\ceil{\frac{p}{P_q}P_{qn-1}^2}, \ceil{\frac{p}{P_q}P_{qn-1}^2}, \ceil{\frac{p}{P_q}\pas{\frac{P_{qn-1}^2}{2}-1}^2}}\in T\pas{\frac{p}{P_q}}.
		\end{equation}
Take any odd $n\in \mathbb{N}$. We define $x=P_{qn}-P_{qn-1}$ and $y=P_{qn-1}$. Since $qn-1$ is even,  Lemma~\ref{Lemma-PellEquation} yields $x^2=2y^2+1$. It follows from \eqref{Property-Pell2} in Lemma~\ref{Lemma-Pell} that  
\[
y^2=P_{qn-1}^2=P_{qn}P_{qn-2}+ (-1)^{qn}=P_{qn}P_{qn-2}-1.
\]
By \eqref{Property-Pell1} in Lemma~\eqref{Lemma-Pell}, $P_q \mid P_{qn}$. Therefore, we have
	\begin{align*}
		\ceil{\frac{p}{P_q} x^2}&=\ceil{\frac{p}{P_q}(2y^2+1)}=\ceil{2p\frac{P_{qn}}{P_q} P_{qn-2}-\frac{p}{P_q}}\\
		&=2\frac{p}{P_q}P_{qn}P_{qn-2}=2\frac{p}{P_q}\pas{y^2+1}.
	\end{align*}
	Note that $(p/P_q) (y^2+1)$ is an integer which is equal to $p (P_{qn}/P_q)P_{qn-2}$ and we have
	\[
	\frac{p}{P_q}(y^2+1)-1<\frac{p}{P_q} y^2\leq \frac{p}{P_q} (y^2+1).
	\]
	Therefore, we obtain 
	\begin{equation}\label{Equation-upperT-Q1}
	\ceil{\frac{p}{P_q} x^2}	=2\ceil{\frac{p}{P_q} y^2}.
\end{equation}	
	In addition, since $P_q\mid y^2+1$ and $\ceil{(p/P_q)y^2}= \ceil{(p/P_q)(y^2+1)}$, we get
	\begin{equation}\label{Equation-upperT-Q2}
		\ceil{\frac{p}{P_q} \pas{\frac{y^2}{2}+1}^2}=\ceil{\frac{p}{P_q}\frac{y^4}{4}}+\ceil{\frac{p}{P_q}(y^2+1)}=\ceil{\frac{p}{P_q} \pas{\frac{y^2}{2}}^2}+\ceil{\frac{p}{P_q} y^2}.
	\end{equation}
	Let us now show that
	\begin{equation}\label{Equation-upperT-Q3}
	    \ceil{\frac{p}{P_q} \pas{y^2/2-1}^2}  =\ceil{\frac{p}{P_q} \pas{y^2/2}^2}-\ceil{\frac{p}{P_q} y^2}.
	\end{equation}
By $y^2=P_{qn}P_{qn-2}-1$ and $p\in [1,(4/9)P_q)_{\mathbb{Z}}$, we have
	\begin{gather*}
	\Pas{\frac{p}{P_q}y^2}=1-\frac{p}{P_q},\\
	\Pas{\frac{p}{P_q}\pas{\frac{y^2}{2}-1}^2}=\Pas{\frac{p}{P_q} \frac{y^4}{4}-\frac{p}{P_q}y^2+\frac{p}{P_q} }
	=  \Pas{\frac{9p}{4P_q}}
	=\frac{9p}{4P_q} ,
	\end{gather*}
which implies that $1<\Pas{(p/P_q) y^2}+\Pas{(p/P_q) \pas{y^2/2-1}^2}<2$.
Hence, we have
	\begin{align*}
	\ceil{\frac{p}{P_q}\pas{y^4/4+1}}&= 
	\ceil{\frac{p}{P_q}\pas{y^2/2-1}^2}+ \ceil{\frac{p}{P_q} y^2}.
	\end{align*}
	Additionally, by $p<(4/5)P_q$, the left-hand side is equal to $\ceil{(p/P_q) (y^2/2)^2}$. Therefore, we get \eqref{Equation-upperT-Q3}. Similar to the proof of Theorem~\ref{Theorem-lowerT-Q}, we can deduce \eqref{Relation-upperT} from \eqref{Equation-upperT-Q1}, \eqref{Equation-upperT-Q2}, and  \eqref{Equation-upperT-Q3}. 
	\end{proof}
\begin{proof}[Proof of Theorem~\ref{Theorem-upperT-QQ}]
Let $\alpha$ be an arbitrary rational number in $(1/8,3/16]\cup(1/3,3/8]\cup(5/9,9/16]$. Let $\alpha=p/q$ where $p,q\in \mathbb{N}$ are relatively prime. Take $r=r(q)\in \mathbb{N}$ as in Lemma~\ref{Lemma-divisibility}. 
We can show that for all $n\in \mathbb{N}$ with $4\mid n$,
	\begin{equation}\label{Relation-upperT-QQ}
	\pas{\ceil{\alpha P_{rn-2}^2}, \ceil{\alpha P_{rn-2}^2},
		\ceil{\alpha\pas{\frac{P_{rn-2}^2}{2}}^2}}\in T\pas{\alpha}.
\end{equation}
 Fix any $n\in \mathbb{N}$ with $4\mid n$. We define $x=P_{rn-1}-P_{rn-2}$ and $y=P_{rn-2}$. Since $rn-2$ is even, we get $x^2=2y^2+1$ and 
\[
y^2=(P_{rn}-2P_{rn-1})^2\equiv 4P_{rn-1}^2=4(P_{rn}P_{rn-2}+1)\equiv4\mod 4q.
\]
For all $x\in \mathbb{R}$, we define $\delta(x)=\{x\}$ if $x\notin \mathbb{Z}$, and  $\delta(x)=1$ otherwise. Then we obtain $\lceil x\rceil =x+1-\delta(x)$. Therefore, the following equivalences hold: 
\begin{align*}
	&\ceil{\alpha x^2}=2\ceil{\alpha y^2}\\
	&\iff \alpha(2y^2+1)+1-\delta\pas{\alpha(2y^2+1)}=2\alpha y^2+2-2\delta\pas{\alpha y^2}\\
	&\iff 1-\alpha+\delta\pas{9\alpha}-2\delta\pas{4\alpha}=0,\\
	&\ceil{\alpha\pas{y^2/2+1}^2}=\ceil{\alpha\pas{y^2/2}^2}+\ceil{\alpha y^2}\\
	&\iff \alpha\pas{y^2/2+1}^2+1-\delta\pas{\alpha\pas{y^2/2+1}^2}\\
	&\hspace{30pt} =\alpha\pas{y^2/2}^2+\alpha y^2+2-\delta\pas{\alpha\pas{y^2/2}^2}-\delta\pas{\alpha y^2}\\
	&\iff 1-\alpha+\delta(9\alpha)-2\delta(4\alpha)=0,\\
	&\ceil{\alpha\pas{y^2/2+2}^2}=\ceil{\alpha\pas{y^2/2+1}^2}+\ceil{\alpha y^2}\\
	&\iff \alpha\pas{y^2/2+2}^2+1-\delta\pas{\alpha\pas{y^2/2+2}^2}\\
	&\hspace{30pt}=\alpha\pas{y^2/2+1}^2+\alpha y^2+2-\delta\pas{\alpha\pas{y^2/2+1}^2}-\delta\pas{\alpha y^2}\\
	&\iff 1-3\alpha+\delta\pas{16\alpha}-\delta\pas{9\alpha}-\delta\pas{4\alpha}=0.
\end{align*}
Since $\alpha\in (1/8,3/16]\cup(1/3,3/8]\cup(5/9,9/16]$, we   have $1-\alpha+\delta\pas{9\alpha}-2\delta\pas{4\alpha}=0$ and $1-3\alpha+\delta\pas{16\alpha}-\delta\pas{9\alpha}-\delta\pas{4\alpha}=0$. Hence, we conclude \eqref{Relation-upperT-QQ}. 
\end{proof}

Before proving Theorem~\ref{Theorem-upper-Lebesguemain1}, we present the following lemma. 
\begin{lemma}\label{Lemma-range}Let $0<s<t<1$. We define
\begin{align*}
B_1&=\{(u,v)\in \mathbb{R}^2\colon (1-s)/2< u< (2-t)/2, v>0 \}\\
B_2&=\{(u,v)\in \mathbb{R}^2\colon 1-s< u+v< 2-t \} ,\\
B_3&=\{(u,v)\in \mathbb{R}^2\colon t<u-v< 1+s \}. 
\end{align*}
Let $\alpha \in [s,t]$. If $n\in \mathbb{N}$ satisfies 
\begin{equation}\label{Relation-B's}
( \{\alpha P_{2n}^2\},\{\alpha P_{2n}^4/4\}) \in B_1\cap B_2\cap B_3\cap (0,1)^2,
\end{equation}
then we have $(\ceil{ \alpha P_{2n}^2}, \ceil{ \alpha P_{2n}^2}, \ceil{ \alpha (P_{2n}^2/2-1)^2 } ) \in \upperT (\alpha)$. 
\end{lemma}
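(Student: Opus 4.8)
The plan is to mimic the proofs of Theorem~\ref{Theorem-lowerT-Q} and Theorem~\ref{Theorem-lowerT-almostall}, replacing the floor bookkeeping by its ceiling analogue. Fix $\alpha\in[s,t]$ and an $n$ satisfying \eqref{Relation-B's}. Set $y=P_{2n}$ and $x=P_{2n+1}-P_{2n}$; since $2n$ is even, Lemma~\ref{Lemma-PellEquation} gives $x^2=2y^2+1$. Because $P_{2n}$ is even (an immediate parity consequence of $P_{k+1}=2P_k+P_{k-1}$ together with $P_0=0$, $P_1=1$), the numbers $y^2/2$, $y^2/2\pm1$ and $y^4/4=(y^2/2)^2$ are integers, and $y^2/2\geq 2$, so $y^2/2-1\geq 1$. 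Write $u=\{\alpha P_{2n}^2\}$ and $v=\{\alpha P_{2n}^4/4\}$; by hypothesis $(u,v)\in(0,1)^2$, so $\alpha y^2=N_1+u$ and $\alpha(y^2/2)^2=N_2+v$ for the integers $N_1=\floor{\alpha y^2}$, $N_2=\floor{\alpha(y^2/2)^2}$, whence $\ceil{\alpha y^2}=N_1+1$ and $\ceil{\alpha(y^2/2)^2}=N_2+1$.

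Next I would expand the three relevant squares, using $(y^2/2\pm1)^2=(y^2/2)^2\pm y^2+1$, and substitute the above:
\begin{align*}
\alpha x^2 &= 2\alpha y^2+\alpha = 2N_1+(2u+\alpha),\\
\alpha(y^2/2-1)^2 &= \alpha(y^2/2)^2-\alpha y^2+\alpha = (N_2-N_1-1)+(1+v-u+\alpha),\\
\alpha(y^2/2+1)^2 &= \alpha(y^2/2)^2+\alpha y^2+\alpha = (N_1+N_2)+(u+v+\alpha).
\end{align*}
The whole point of the three regions is that, for \emph{every} $\alpha$ in the closed interval $[s,t]$, their defining inequalities force the parenthesised quantities into unit intervals with prescribed integer parts: $B_1$ gives $2u+\alpha\in(1,2)$; $B_3$ gives $v-u+\alpha\in(-1,0)$, i.e.\ $1+v-u+\alpha\in(0,1)$; and $B_2$ gives $u+v+\alpha\in(1,2)$. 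Hence
\[
\ceil{\alpha x^2}=2N_1+2,\qquad \ceil{\alpha(y^2/2-1)^2}=N_2-N_1,\qquad \ceil{\alpha(y^2/2+1)^2}=N_1+N_2+2 .
\]

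Finally I would read off the seven membership conditions. Put $k=\ell=\ceil{\alpha P_{2n}^2}=N_1+1$ and $m=\ceil{\alpha(P_{2n}^2/2-1)^2}=N_2-N_1$. Then $k,\ell,m\in\upperS(\alpha)$ by definition (as $P_{2n}$ and $P_{2n}^2/2-1$ lie in $\mathbb{N}$); $k+\ell=2N_1+2=\ceil{\alpha x^2}\in\upperS(\alpha)$; $\ell+m=m+k=N_2+1=\ceil{\alpha(P_{2n}^2/2)^2}\in\upperS(\alpha)$; and $k+\ell+m=N_1+N_2+2=\ceil{\alpha(P_{2n}^2/2+1)^2}\in\upperS(\alpha)$. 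It remains to note the tuple is a legitimately ordered element of $\mathbb{N}^3$: for $n=1$ one has $P_{2n}^2=4=P_{2n}^4/4$, hence $u=v$, contradicting $u-v>t>0$ from $B_3$; so $n\geq 2$, $P_{2n}\geq P_4=12$, and then $(P_{2n}^2/2-1)^2>P_{2n}^2$, giving $m\geq k=\ell\geq 1$. Therefore $(\ceil{\alpha P_{2n}^2},\ceil{\alpha P_{2n}^2},\ceil{\alpha(P_{2n}^2/2-1)^2})\in\upperT(\alpha)$.

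The main obstacle is not any single hard estimate but the bookkeeping: one must identify which combinations $2u+\alpha$, $v-u+\alpha$, $u+v+\alpha$ of the two fractional parts and of $\alpha$ govern each ceiling, and check that $B_1$, $B_2$, $B_3$ were designed precisely so these land in the intended unit intervals uniformly over $\alpha\in[s,t]$. A secondary subtlety, absent in the floor case, is that $\ceil{w}$ is sensitive to whether $w\in\mathbb{Z}$; this is why the hypothesis that the point lies in $(0,1)^2$ rather than $[0,1)^2$ is needed, and the same hypothesis, through $B_3$, also excludes the degenerate index $n=1$ that would otherwise break the ordering.
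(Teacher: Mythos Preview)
Your proof is correct and follows essentially the same route as the paper's: both set $x=P_{2n+1}-P_{2n}$, $y=P_{2n}$, expand $\alpha x^2$ and $\alpha(y^2/2\pm1)^2$, and use the three regions $B_1,B_2,B_3$ to pin the fractional residues $2u+\alpha$, $u+v+\alpha$, $v-u+\alpha$ into the required open unit intervals so that the ceilings land where they should. Your presentation via $N_1,N_2,u,v$ is a direct computation where the paper writes out equivalence chains, and you add the explicit check that $n=1$ is excluded (hence $k\le\ell\le m$), which the paper's proof of this lemma leaves implicit.
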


\begin{remark}Each $B_i$ is non-empty. Indeed, it follows that
\begin{align*}
(2-t)/2-(1-s)/2 &= 1- (t-s)/2>0, \\
(2-t)-(1-s) &=1-(t-s)>0,\\
(1+s)-t&=1-(t-s)>0.
\end{align*}
It is non-trivial that $B_1\cap B_2\cap B_3\cap (0,1)^2\neq \emptyset$. We will verify this in the next section.   
\end{remark}

\begin{proof}[Proof of Lemma~\ref{Lemma-range}]
Fix any $n\in \mathbb{N}$ satisfying \eqref{Relation-B's}. 
Let $x=x_n=P_{2n+1}-P_{2n}$ and $y=y_{n}=P_{2n}$. Then $x^2=2y^2+1$ holds. Since $\alpha y^2 $ is not an integer by the definition of $B_1$, we have
\[
\lceil \alpha y^2 \rceil = \alpha y^2  +1 -\{\alpha y^2 \}.
\]
Therefore, we observe that 
\begin{align}\nonumber
    \lceil \alpha x^2 \rceil = 2\lceil \alpha y^2 \rceil &\iff 2\lceil \alpha y^2 \rceil-1 < 2\alpha y^2+\alpha  \leq 2\lceil \alpha y^2 \rceil\\ \label{Inequality-Apendix1}
    &\iff 1-2\{\alpha y^2\}<\alpha \leq 2-2\{\alpha y^2\}.
\end{align}
By the definition of $B_1$ and the choice of $n$, we have 
\[
(1-s)/2<\{\alpha y^2 \}< (2-t)/2,
\]
which implies \eqref{Inequality-Apendix1} since $\alpha\in [s,t]$. Therefore, we deduce $\lceil \alpha x^2 \rceil = 2\lceil \alpha y^2 \rceil$. Let us next show that 
\begin{equation}\label{Equation-AP1}
\lceil \alpha (y^2/2+1)^2 \rceil = \lceil \alpha (y^2/2)^2 \rceil +\lceil \alpha y^2 \rceil.
\end{equation}
Since $\alpha (y^2/2)^2 $ is not an integer from the definition of $B_1$, we obtain
\[
\lceil\alpha (y^2/2)^2  \rceil = \alpha (y^2/2)^2   +1 -\{\alpha (y^2/2)^2  \}.
\]
Therefore, we also observe that 
\begin{align*}
    \eqref{Equation-AP1} 
    &\iff  \lceil \alpha (y^2/2)^2 \rceil +\lceil \alpha y^2 \rceil-1 < 
    \alpha \frac{y^4}{4} +\alpha (y^2+1) \leq  \lceil \alpha (y^2/2)^2 \rceil +\lceil \alpha y^2 \rceil\\
    &\iff 1-(\{ \alpha y^2 \}+\{ \alpha (y^2/2)^2 \} ) <\alpha \leq 2-(\{ \alpha y^2 \}+\{ \alpha (y^2/2)^2 \} )
\end{align*}
By the definition of $B_2$ and the choice of $n$, we have
\[
1-s< \{ \alpha y^2 \}+\{ \alpha (y^2/2)^2 \} < 2-t,
\]
which implies the above equivalent condition to \eqref{Equation-AP1} since $\alpha\in [s,t] $. Thus, we conclude \eqref{Equation-AP1}. We lastly show that 
\begin{equation}\label{Equation-AP2}
\lceil \alpha (y^2/2-1)^2 \rceil = \lceil \alpha (y^2/2)^2 \rceil -\lceil \alpha y^2 \rceil.
\end{equation}
Similarly, we obtain the following equivalences:
\begin{align*}
    \eqref{Equation-AP2} 
    &\iff  \lceil \alpha (y^2/2)^2 \rceil -\lceil \alpha y^2 \rceil-1 < 
    \alpha \frac{y^4}{4} -\alpha y^2+\alpha \leq  \lceil \alpha (y^2/2)^2 \rceil -\lceil \alpha y^2 \rceil\\
    &\iff -1+\{ \alpha y^2 \}-\{ \alpha (y^2/2)^2 \}  <\alpha \leq \{ \alpha y^2 \}-\{ \alpha (y^2/2)^2 \} 
\end{align*}
By the definition of $B_3$ and the choice of $n$, we obtain
\[
 t< \{ \alpha y^2 \}- \{ \alpha (y^2/2)^2 \}< 1+s,
\]
which implies the above equivalent condition to \eqref{Equation-AP2} since $\alpha\in[s,t]$. Therefore, we conclude \eqref{Equation-AP2}.

By the above discussion, setting $k=\ell = \lceil \alpha y^2 \rceil$ and $ m=\lceil \alpha (y^2/2-1)^2 \rceil$, we obtain $(k,\ell, m)\in \upperT(\alpha)$.  
\end{proof}

\begin{proof}[Proof of Theorem~\ref{Theorem-upper-Lebesguemain1}]
Fix any $0<t<2/3$, and let $0<s<t$ be a parameter chosen later. Let $B_1$, $B_2$, $B_3$ be as in Lemma~\ref{Lemma-range}. Lemma~\ref{Lemma-uniformly} shows that for almost all $\alpha\in [s,t]$, 
\begin{align*}
 &\lim_{N\to \infty}  \frac{\#\left\{ n\in [N] \colon 
 ( \{\alpha P_{2n}^2\},\{\alpha P_{2n}^4/4\}) \in B_1\cap B_2\cap B_3\cap (0,1)^2
 \right\}}{N}\\ 
 &= \mu_2 (B_1\cap B_2\cap B_3\cap (0,1)^2). 
\end{align*}
Therefore, by Lemma~\ref{Lemma-range}, if $\mu (B_1\cap B_2\cap B_3\cap (0,1)^2)>0$, then $\#\upperT (\alpha)=\infty$ for almost all $\alpha \in [s,t]$. Let us show that $\mu (B_1\cap B_2\cap B_3\cap (0,1)^2)>0$. Note that $B_1$, $B_2$, $B_3$, and $(0,1)^2$ are open. Therefore, it suffices to show that $B_1\cap B_2\cap B_3\cap (0,1)^2$ is non-empty. Let $R= B_2\cap B_3$.  By solving the systems of linear equations, $R$ forms  the open square whose vertices are 
\begin{align*}
\left(\frac{1+(t-s)}{2},\ \frac{1-(t+s)}{2}\right)&,\  \left(1,\ 1-t\right),\\
\left(\frac{3-(s+t)}{2},\ \frac{1-(t+s)}{2}\right)&,\ \left(1,\ -s\right).
\end{align*}
Define $\mathrm{proj}_1\colon \mathbb{R}^2 \to \mathbb{R}$ as $\mathrm{proj}_1(u,v)=u$. Let us consider two cases as follows:
(a) $0<t\leq 1/2$, (b) $t>1/2$.
In case (a), we choose an arbitrarily small $s\in (0,t)$. Then we obtain $(1-(t+s))/2\in (0,1)$. Therefore,
\begin{align*}
&\mathrm{proj}_1(R\cap(0,1)^2\cap B_1)\\
&=  \left(\frac{1+(t-s)}{2},\ \frac{3-(s+t)}{2} \right) \cap (0,1)\cap \left(\frac{1-s}{2}, \frac{2-t}{2}  \right)\\
& = \left(\frac{1+(t-s)}{2},\ 1 \right) \cap \left(\frac{1-s}{2}, \frac{2-t}{2}  \right)
\end{align*}
which is non-empty by $0<s<t\leq 1/2$. Thus, almost all $\alpha\in (0,1/2]$ satisfy $\# \upperT(\alpha)=\infty$. In case (b), we choose any $s\in (1/2,t)$. Then  $(1-(t+s))/2<0$. Therefore,
\begin{align*}
\mathrm{proj}_1(R\cap(0,1)^2\cap B_1)&=  \left(t, 1 \right) \cap \left(\frac{1-s}{2}, \frac{2-t}{2}  \right),
\end{align*}
which is non-empty from $1/2<t<2/3$. Hence, we can conclude that $\# \upperT(\alpha)=\infty$ for almost all $\alpha \in [1/2,2/3]$.   
\end{proof}

\begin{remark}
For every $\alpha>0$ and $x>1$, we define $\upperT_{\leq x}(\alpha)$ as the number of tuples $(\ceil{\alpha n_1^2} ,\ceil{\alpha n_2^2},\ceil{\alpha n_3^2})\in \upperT(\alpha)$ with $n_1\leq n_2\leq n_3\leq x$. From~\eqref{Relation-upperT}, we deduce that 
\[
\# \upperT_{\leq x}(p/P_q) \geq  \floor{ \frac{\log(16\phi_P^2 x) }{2q\log \phi_P }} 
\]
for all odd $q\in \mathbb{N}$, $p\in [1,(4/9)P_q)_{\mathbb{Z}}$, and $x>1$. We can also get similar lower bounds in the case of $\alpha\in ((1/8,3/16]\cup(1/3,3/8]\cup(5/9,9/16])\cap \mathbb{Q}$. In addition, let $0<s<t<1$;  lemma~\ref{Lemma-range} and the proof of Theorem~\ref{Theorem-upper-Lebesguemain1} imply that for almost all $\alpha\in [s,t]$ there exist $\lambda'=\lambda'(s,t)>0$ and $x_0=x_0(s,t,\alpha)>0$ such that for all $x\geq x_0$, we obtain
$\# \upperT_{\leq x}(\alpha) \geq \lambda' \log x$.
\end{remark}

\section*{Acknowledgement} We would like to thank the referee for valuable comments. The second author was supported by JSPS KAKENHI Grant Number JP22J00025.

\bibliographystyle{amsalpha}
\bibliography{references_PEB}

\providecommand{\bysame}{\leavevmode\hbox to3em{\hrulefill}\thinspace}
\providecommand{\MR}{\relax\ifhmode\unskip\space\fi MR }
% \MRhref is called by the amsart/book/proc definition of \MR.
\providecommand{\MRhref}[2]{%
  \href{http://www.ams.org/mathscinet-getitem?mr=#1}{#2}
}
\providecommand{\href}[2]{#2}
\begin{thebibliography}{Bug12}

\bibitem[Bug12]{Bugeaud}
Yann Bugeaud, \emph{Distribution modulo one and diophantine approximation},
  Cambridge Tracts in Mathematics, Cambridge University Press, 2012.

\bibitem[Del71]{Delmer}
Francine Delmer, \emph{Sur la somme de diviseurs {$\sum _{k\leq
  x}\,\{d[f(k)]\}^{s}$}}, C. R. Acad. Sci. Paris S\'{e}r. A-B \textbf{272}
  (1971), A849--A852. \MR{289433}

\bibitem[DK02]{DajaniKraaikamp}
Karma Dajani and Cor Kraaikamp, \emph{Ergodic theory of numbers}, Carus
  Mathematical Monographs, vol.~29, Mathematical Association of America,
  Washington, DC, 2002. \MR{1917322}

\bibitem[FW09]{FrantzikinakisWierdl}
Nikos Frantzikinakis and M\'{a}t\'{e} Wierdl, \emph{A {H}ardy field extension
  of {S}zemer\'{e}di's theorem}, Adv. Math. \textbf{222} (2009), no.~1, 1--43.
  \MR{2531366}

\bibitem[Gla17]{Glasscock17}
Daniel Glasscock, \emph{Solutions to certain linear equations in
  {P}iatetski-{S}hapiro sequences}, Acta Arith. \textbf{177} (2017), no.~1,
  39--52. \MR{3589913}

\bibitem[KN74]{KuipersNiederreiter}
L.~Kuipers and H.~Niederreiter, \emph{Uniform distribution of sequences},
  Wiley-Interscience [John Wiley \& Sons], New York-London-Sydney, 1974, Pure
  and Applied Mathematics. \MR{0419394}

\bibitem[Kos18]{Koshy}
Thomas Koshy, \emph{Fibonacci and {L}ucas numbers with applications. {V}ol. 1},
  Pure and Applied Mathematics (Hoboken), John Wiley \& Sons, Inc., Hoboken,
  NJ, 2018, Second edition of [ MR1855020]. \MR{3727121}

\bibitem[MS21]{MatsusakaSaito}
Toshiki Matsusaka and Kota Saito, \emph{Linear {D}iophantine equations in
  {P}iatetski-{S}hapiro sequences}, Acta Arith. \textbf{200} (2021), no.~1,
  91--110. \MR{4319608}

\bibitem[Sai22]{Saito22}
Kota Saito, \emph{Linear equations with two variables in {P}iatetski-{S}hapiro
  sequences}, Acta Arith. \textbf{202} (2022), no.~2, 161--171. \MR{4390827}

\bibitem[SY19]{SYoshida}
Kota Saito and Yuuya Yoshida, \emph{Arithmetic progressions in the graphs of
  slightly curved sequences}, J. Integer Seq. \textbf{22} (2019), no.~2, Art.
  19.2.1, 25. \MR{3956584}

\end{thebibliography}
\end{document}